\DeclareMathAlphabet{\pazocal}{OMS}{zplm}{m}{n}
\providecommand{\keywords}[1]
{
  \small	
  \textbf{\textit{Keywords:}} #1
}
\numberwithin{equation}{section}
\newtheorem{theorem}{Theorem}[section]
\newtheorem{algorithm}[theorem]{Algorithm}
\newtheorem{appendixthm}{}[section] 
\numberwithin{equation}{section}
\theoremstyle{appendixstyle}
\newtheoremstyle{appendixstyle} 
  {3pt}   
  {3pt}   
  {\normalfont} 
  {}      
  {\bfseries} 
  {.}     
  {.5em}  
  {\thmnumber{#2}} 
\theoremstyle{definition}
\newtheoremstyle{myremarkstyle}{}{}{}{}{\bfseries}{.}{ }{}
\theoremstyle{myremarkstyle}
\declaretheorem[name=Remark,qed=$\blacksquare$,numberlike=theorem]{remark}
\newcommand*{\intavg}{%
  \mint@l{-}{}%
}
\newcommand*{\mint@l}[2]{%
  \@ifnextchar\limits{%
    \mint@l{#1}%
  }{%
    \@ifnextchar\nolimits{%
      \mint@l{#1}%
    }{%
      \@ifnextchar\displaylimits{%
        \mint@l{#1}%
      }{%
        \mint@s{#2}{#1}%
      }%
    }%
  }%
}
\newcommand*{\mint@s}[2]{%
  \@ifnextchar_{%
    \mint@sub{#1}{#2}%
  }{%
    \@ifnextchar^{%
      \mint@sup{#1}{#2}%
    }{%
      \mint@{#1}{#2}{}{}%
    }%
  }%
}
\def\mint@sub#1#2_#3{%
  \@ifnextchar^{%
    \mint@sub@sup{#1}{#2}{#3}%
  }{%
    \mint@{#1}{#2}{#3}{}%
  }%
}
\def\mint@sup#1#2^#3{%
  \@ifnextchar_{%
    \mint@sub@sup{#1}{#2}{#3}%
  }{%
    \mint@{#1}{#2}{}{#3}%
  }%
}
\def\mint@sub@sup#1#2#3^#4{%
  \mint@{#1}{#2}{#3}{#4}%
}
\def\mint@sup@sub#1#2#3_#4{%
  \mint@{#1}{#2}{#4}{#3}%
}
\newcommand*{\mint@}[4]{%
  \mathop{}%
  \mkern-\thinmuskip
  \mathchoice{%
    \mint@@{#1}{#2}{#3}{#4}%
        \displaystyle\textstyle\scriptstyle
  }{%
    \mint@@{#1}{#2}{#3}{#4}%
        \textstyle\scriptstyle\scriptstyle
  }{%
    \mint@@{#1}{#2}{#3}{#4}%
        \scriptstyle\scriptscriptstyle\scriptscriptstyle
  }{%
    \mint@@{#1}{#2}{#3}{#4}%
        \scriptscriptstyle\scriptscriptstyle\scriptscriptstyle
  }%
  \mkern-\thinmuskip
  \int#1%
  \ifx\\#3\\\else_{#3}\fi
  \ifx\\#4\\\else^{#4}\fi  
}
\newcommand*{\mint@@}[7]{%
  \begingroup
    \sbox0{$#5\int\m@th$}%
    \sbox2{$#5\int_{}\m@th$}%
    \dimen2=\wd0 %
    \let\mint@limits=#1\relax
    \ifx\mint@limits\relax
      \sbox4{$#5\int_{\kern1sp}^{\kern1sp}\m@th$}%
      \ifdim\wd4>\wd2 %
        \let\mint@limits=\nolimits
      \else
        \let\mint@limits=\limits
      \fi
    \fi
    \ifx\mint@limits\displaylimits
      \ifx#5\displaystyle
        \let\mint@limits=\limits
      \fi
    \fi
    \ifx\mint@limits\limits
      \sbox0{$#7#3\m@th$}%
      \sbox2{$#7#4\m@th$}%
      \ifdim\wd0>\dimen2 %
        \dimen2=\wd0 %
      \fi
      \ifdim\wd2>\dimen2 %
        \dimen2=\wd2 %
      \fi
    \fi
    \rlap{%
      $#5%
        \vcenter{%
          \hbox to\dimen2{%
            \hss
            $#6{#2}\m@th$%
            \hss
          }%
        }%
      $%
    }%
  \endgroup
}
\def\XXint#1#2#3{{\setbox0=\hbox{$#1{#2#3}{\int}$ }
		\vcenter{\hbox{$#2#3$ }}\kern-.6\wd0}}
\renewcommand{\geq}{\geqslant}
\renewcommand{\leq}{\leqslant}
\renewcommand{\epsilon}{\varepsilon}
\renewcommand{\phi}{\varphi}
\newcommand{\train}{\mathcal{S}}
\begin{document}

\title{A residual weighted physics informed neural network for forward and inverse problems of reaction diffusion equations.}

\author{K. Murari   \footnotemark[2]  \and P. Roul  \footnotemark[3]
	\and S. Sundar \footnotemark[2]
}

\date{\today}

\maketitle
\medskip
\centerline{$\dagger$ Centre for Computational Mathematics and Data Science}
\centerline{Department of Mathematics, IIT Madras, Chennai 600036, India}
\centerline{$\ddagger$ Department of Mathematics, VNIT, Nagpur, Maharashtra 440010, India}
\centerline{Email:kmurari2712@gmail.com, drpkroul@mth.vnit.ac.in,  slnt@smail.iitm.ac.in }

 \begin{abstract} 
 In this work, we propose the Residual-Weighted Physics-Informed Neural Network (RW-PINN), a new method designed to enhance the accuracy of Physics-Informed Neural Network (PINN) -based algorithms.
We construct a deep learning framework with two residual-weighting schemes to solve reaction–diffusion equations and evaluate its performance on both forward and inverse problems. The approach computes weights proportional to the PDE residuals, rescales them, and incorporates these scaled residuals into the loss function, leading to more stable training. Furthermore, we establish generalized error bounds that account for training and quadrature errors, and we analyze the convergence and stability of the method. The proposed algorithms are validated through numerical experiments on nonlinear equations, supported by statistical error analysis. To further demonstrate the effectiveness of our methodology, we implemented PINN-based forward  and inverse frameworks for the nonlinear equations and conducted a comparative analysis with the proposed RW-PINN approach.
\end{abstract}

\medskip

\keywords{Deep learning, RW-PINN, Forward Problems, Inverse Problems, Reaction-Diffusion Equations.}

\section{Introduction}\label{sec:intro}
\noindent

Reaction–diffusion equations have been widely applied across various fields and play a crucial role in modeling numerous real-world phenomena. In this work, we focus on two specific types of reaction–diffusion equations for detailed analysis: the Burgess equation and extended Fisher–Kolmogorov equation.
Burgess et al. \cite{burgess1997interaction} developed mathematical framework for gliomas in 1997. A model incorporating fractional operators was later developed by \cite{ganji2021mathematical}. More recently, \cite{nayied2023numerical} conducted simulations of this model using Fibonacci and Haar wavelets. The EFK equation was developed by augmenting the Fisher–Kolmogorov (FK) model with a fourth-order derivative term, as presented in \cite{coullet1987nature}, \cite{Dee1988}, and \cite{Van1987}. The EFK equation is widely applied across various physics disciplines, including fluid dynamics, plasma science, nuclear reactions, ecological modeling, and epidemic studies \cite{Ahlers1983}. However, these equations can exhibit very complex behavior, especially in reaction-diffusion systems, due to their nonlinear nature and complex computational domains \cite{Ahlers1983}. The work by \cite{danumjaya2006numerical} presents a numerical analysis of the finite element method applied to the EFK equation.

 A crucial application of the Fisher–Kolmogorov equation lies in the modeling of brain tumor dynamics \cite{ilati2020analysis}. The study employs the interpolating element-free Galerkin (IEFG) method for numerical simulation, offering a meshless approach that effectively handles complex tumor growth patterns. Various methods have been developed for the EFK equation, including the interpolating element-free Galerkin (IEFG) method \cite{ilati2020analysis}, finite difference and second-order schemes for 2D FK equations \cite{khiari2011finite}, \cite{kadri2011second}, and a Fourier pseudo-spectral method \cite{liu2017fourier}. The direct local boundary integral equation method was applied by \cite{ilati2018direct}, and an error analysis of IEFG was conducted by \cite{abbaszadeh2020error}. Meshfree schemes using radial basis functions were introduced by \cite{kumar2022radial}, and a meshless generalized finite difference method was developed by \cite{ju2023three}. Recent developments include adaptive low-rank splitting \cite{zhao2024adaptive}, finite element analysis \cite{al2024finite}, and superconvergence analysis of FEMs \cite{pei2024unconditional}.

Deep learning has become an essential technique for addressing the curse of dimensionality, making it a critical tool in modern technology and cutting-edge research over recent years. Deep learning techniques are particularly well-suited for approximating highly nonlinear functions by employing multiple layers of transformations and nonlinear functions. These methods, advanced statistical learning, and large-scale optimization techniques have become increasingly reliable for solving nonlinear and high-dimensional partial differential equations (PDEs). The universal approximation theorem, demonstrated by Cybenko \cite{cybenko1989approximations}, Hornik et al. \cite{hornik1989multilayer}, Barron \cite{barron1993universal}, and Yarotsky \cite{yarotsky2017error}, shows that deep neural networks (DNNs) can approximate any continuous function under specific conditions. This makes DNNs highly suitable for use as trial functions in solving PDEs. One common technique involves minimizing the residual of the PDE by evaluating it at discrete points, often called collocation points. Several algorithms have been developed based on deep learning, with some of the most prominent being Physics-Informed Neural Networks (PINNs) and deep operator networks such as Deeponets and their variants. PINNs, first introduced by Raissi et al. \cite{raissi2019physics}, have proven highly effective in addressing high-dimensional PDEs. Their mesh-free nature and ability to solve forward and inverse problems within a single optimization framework make them particularly powerful.  Extensions to this algorithm have been proposed in works such as \cite{jagtap2020extended}, \cite{jagtap2020conservative}, \cite{shukla2021parallel}, \cite{moseley2023finite} and \cite{yu2022gradient}, with libraries such as \cite{lu2021deepxde} developed to facilitate solving PDEs using PINNs. Furthermore, domain decomposition methods have been applied to PINNs by \cite{dolean2022finite}. Despite their success, challenges remain in training these models, as highlighted by \cite{wang2022and}, who explored these difficulties using the Neural Tangent Kernel (NTK) framework.
\cite{peng2022pinn} applied the PINN deep learning method to solve the Chen-Lee-Liu equation, focusing on rogue waves on a periodic background. \cite{pu2023data} used a PINN approach with parameter regularization to solve forward and inverse problems for the Yajima Oikawa system. Mishra and Molinaro analyzed the generalization error of PINN-based Algorithms for forward \cite{mishra2023estimates} and inverse problems \cite{mishra2022estimates} across various nonlinear PDEs. Mishra and his collaborators also derived error bounds \cite{de2022error}, \cite{de2024error}, \cite{mishra2021physics}, \cite{bai2021physics} and also introduced weak PINNs (wPINNs) for estimating entropy solutions to scalar conservation laws \cite{de2024wpinns}. \cite{bai2021physics} conducted numerical experiments and derived generalized error bounds for nonlinear dispersive equations, including the KdV-Kawahara, Camassa-Holm, and Benjamin-Ono equations, using PINN-based algorithms. The study by \cite{Zhang2023} examines the boundedness and convergence properties of neural networks in the context of PINNs. \cite{berrone2022variational} has introduced a variational formulation within the PINNs framework. The recent work of \cite{DeRyck2024} explores the numerical analysis of PINNs. Recent studies have introduced a range of promising deep learning approaches, as seen in \cite{wang2024dcem}, \cite{eshaghi2025variational}, \cite{Bai2024}, \cite{Noorizadegan2024}, \cite{Eshaghi2025} and \cite{Sun2023}. Recently, \cite{liu2024discontinuity} proposed a weighted treatment within the PINNs framework by introducing equation-dependent weighting strategies to enhance discontinuity resolution. \cite{rodrigues2024} has applied PINN for tumor cell growth modeling using  differential equation for montroll growth model, verhulst growth model. \cite{zhang2025personalized} determined individualized parameters for a reaction-diffusion PDE framework describing glioblastoma progression using a single 3D structural MRI scan. \cite{Zapf2022} analyzed the movement of molecules within the human brain using MRI data and PINNs. \\

The contribution of the work is following:
This study presents a deep learning framework for solving the Burgess and extended Fisher–Kolmogorov equations, which describe glioblastoma progression, in both forward and inverse problem settings. We introduce a Residual-Weighted Physics-Informed Neural Network (RW-PINN), which
 leverages residual-based scaling to significantly improve the accuracy of PINN solutions. The RW-PINN architecture is developed to yield precise numerical approximations for the reaction diffusion equations. The residual and the corresponding loss function approximation are derived. The proposed approach establishes a strong theoretical foundation by formulating a rigorous generalized error bound, which is expressed in terms of training and quadrature errors. Additionally, a rigorous proof of the boundedness and convergence of the neural network is provided, verifying the theoretical validity of the neural network approximation. Numerical experiments are conducted for both forward and inverse problems in  nonlinear cases. Extensive computational results, supported by statistical analyses, demonstrate the method’s effectiveness and accuracy. Furthermore, we applied PINN-based forward~\cite{mishra2023estimates,bai2021physics} and inverse~\cite{mishra2022estimates} frameworks to reaction diffusion equations and carried out a comparative study with the proposed methods.
The results demonstrate that RW-PINN-based algorithms serve as effective and robust tools for simulating nonlinear PDEs, providing a reliable computational framework for these equations.\\
\textbf{Novelty:} \textit{In this work, we investigate the simulation of the Burgess and extended Fisher–Kolmogorov equations by leveraging deep learning–based algorithms for accurate solution approximation. We propose a new method, termed the Residual-Weighted Physics-Informed Neural Network (RW-PINN), to enhance the accuracy and robustness of  PINN-based forward and inverse algorithms. In this approach, weights are assigned proportionately to the PDE residuals, appropriately scaled, and subsequently incorporated into the loss function, facilitating more efficient and stable convergence. Specifically, two distinct weighting strategies based on the PDE residual are considered: (i) sigmoid-based weighting and (ii) softplus-based weighting, both of which are discussed in detail in the residual section. Such approaches represent a valuable contribution to the class of residual-based methods. These approaches have not been previously reported in the literature}.

This paper is structured as follows: Section \ref{sec:2} presents the mathematical formulation and methodology, including the RW-PINN framework, governing equations, quadrature techniques, neural network design, residual computation, loss functions, optimization approach, generalization error estimation, and the stability and convergence of multilayer neural networks. Section \ref{sec:3} details numerical experiments and validates the proposed approach. Section \ref{sec:4} provides a theoretical measure of errors. Finally, Section \ref{sec:5} summarizes the key findings. An appendix is included for proofs and lemmas.
\section{RW-PINN Approximation}\label{sec:2}
The RW-PINN approximation for reaction–diffusion equations is formulated by defining the model architecture, introducing residual-weighting schemes, and embedding them within the PINN framework.

\subsection{Models}
The two different types of reaction–diffusion equations considered in this study are as follows:
\subsubsection{Burgess equation}
In \cite{ganji2021mathematical}, the fractional form of the one-dimensional Burgess equation was investigated, while \cite{nayied2023numerical} employed wavelet-based techniques to numerically simulate the integer-order Burgess equation. It is expressed as \cite{nayied2023numerical}

\begin{align}\label{burgis2}
\frac{\partial u(t,x)}{\partial t} &= \frac{1}{2}\frac{\partial^{2}u(t,x)}{\partial x^{2}} + R(t,x), \quad &&\text{on} \quad [0,T] \times  \mathbf{D_{1}},\\ 
u(0, x) &= u_{0}(x), \quad && \text{on} \quad  \mathbf{D_{1}}, \\  
    u &= \Gamma_{1}, \quad  u = \Gamma_{2}, \quad && \text{on} \quad  [0,T] \times \partial \mathbf{D_{1}}.  
\end{align}
Where \(\Gamma_{1}\) and \( \Gamma_{2}\) are known functions.
\subsubsection{Extended Fisher–Kolmogorov equation}  
The EFK equation represents a nonlinear biharmonic equation. It is expressed as \cite{ilati2020analysis}

\begin{align}\label{eq:eqmain}  
    u_{t} +  \gamma \Delta^{2}u - \Delta u + F(u)  &= 0,  && \text{on} \quad [0,T] \times  \mathbf{D_{2}}, \\  
    u(0,x) &= u_{0}(x),  &&  \text{on} \quad  \mathbf{D_{2}} , \\  
    u = \Gamma_{1}, \quad \Delta u &= \Gamma_{2}, &&  \text{on} \quad  [0,T] \times \partial \mathbf{D_{2}}.  
\end{align}  

Here, \( u_t \) represents the time evolution,  
\( \gamma \Delta^2 u \) accounts for higher-order diffusion (biharmonic diffusion),  
\( -\Delta u \) corresponds to standard diffusion,  and  
\(F(u)= u^3 - u \) is a nonlinear reaction term.  

In this context, \( \Gamma_{3} \) and \( \Gamma_{4} \) denote known functions, while \(t \in [0,T], x \in \mathbf{D_{2}} \subset \mathbb{R}^{d}\) and \(u \in [0,T] \times  \mathbf{D_{2}} \rightarrow \mathbb{R} \) represents a confined domain. The parameter \( \gamma \) is a strictly positive constant. An essential characteristic of the EFK equation is its energy dissipation law, defined through the energies dissipation law, defined through 
the energy functional can be written as \cite{pei2024unconditional}
\begin{equation}\label{eq:engergy2}
E(u) = \int\limits_{\mathbf{D_{2}}} \left( \frac{\gamma}{2} |\Delta u|^2 + \frac{1}{2} |\nabla u|^2 + \frac{1}{4} (1 - u^2)^2 \right) dx.
\end{equation}

\subsection{The Underlying Abstract PDE}  
Consider separable Banach spaces \( X \) and \( Y \) with norms \( \| \cdot \|_{X} \) and \( \| \cdot \|_{Y} \), respectively. To be precise, define \( Y = L^p(\mathbbm{D}; \mathbb{R}^m) \) and \( X = W^{s,q}(\mathbbm{D}; \mathbb{R}^m) \), where \( m \geq 1 \), \( 1 \leq p, q < \infty \), and \( s \geq 0 \), with \( \mathbbm{D} \subset {\mathbb{R}}^{\overline{d}} \) for some \( \bar{d} \geq 1 \).
 \( \mathbbm{D}= \mathbf{D}_T = [0, T] \times \mathbf{D} \subset \mathbb{R}^d \). Let \( X^{\ast} \subset X \) and \( Y^{\ast} \subset Y \) be closed subspaces equipped with norms \( \|\cdot\|_{X^{\ast}} \) and \( \|\cdot\|_{Y^{\ast}} \), respectively. The forward problem is well-posed, as all necessary information is available, while the inverse problem is inherently ill-posed due to missing or incomplete information.
\subsubsection{Forward problems}\label{sec:forward}
The abstract formulation of the governing PDE is  
\begin{equation}  
\label{eq:pde}  
\mathcal{D}(u) = \mathbf{f},  
\end{equation}  
where \( \mathcal{D} \) represents a differential operator mapping \( X^{\ast} \) to \( Y^{\ast} \), and \( \mathbf{f} \in Y^{\ast} \) satisfies the following conditions:  
\begin{equation}  
\label{eq:assm1}  
\begin{aligned}  
&\text{(H1)}: \quad \|\mathcal{D}(u)\|_{Y^{\ast}} < \infty, \quad \forall u \in X^{\ast} \text{ with } \|u\|_{X^{\ast}} < \infty. \\  
&\text{(H2)}: \quad \|\mathbf{f}\|_{Y^{\ast}} < \infty.  
\end{aligned}  
\end{equation}  
Additionally, assume that for each \( \mathbf{f} \in Y^{\ast} \), a unique solution \( u \in X^{\ast} \) exists for \eqref{eq:pde}, subject to approximate boundary and initial conditions given by  

\begin{equation}  
\mathcal{B}(u) = u_b \quad \text{on } \partial \mathbf{D}, \quad u(0, x) = u_0 \quad \text{on } \mathbf{D}.  
\end{equation}  
Here, \( \mathcal{B} \) represents a boundary operator, \( u_b \) is the prescribed boundary data, and \( u_0(x) \) denotes the initial condition.
\subsubsection{Inverse problems}
The problem is considered with unknown boundary and initial conditions, rendering the forward problem defined in \eqref{eq:pde} ill-posed. Let the solution  $u$  satisfy the given equation within the subdomain $\mathbf{D}'_T$. The operator $\mathcal{L}$ applied to $u $ in this region is given by a prescribed function  $g$  expressed as:

\begin{equation}  
\mathcal{L}(u) = g, \quad \forall (t, \boldsymbol{x}) \in \mathbf{D}'_T.  
\end{equation}  

Here, \(\mathbf{D}'\) is a subset of \(\mathbf{D}\), and the spatiotemporal domain is defined as
\(
\mathbf{D}'_{T} = [0, T] \times \mathbf{D}' \subset [0, T] \times \mathbf{D}.
\)\\

In particular, for the Burgess equation we take \(\mathbf{D} = \mathbf{D}_1\), so that
\(
[0, T] \times \mathbf{D}_1' \;\subset\; [0, T] \times \mathbf{D},
\)
while for the EFK equation we set \(\mathbf{D} = \mathbf{D}_2\), giving
\(
[0, T] \times \mathbf{D}_2' \;\subset\; [0, T] \times \mathbf{D}.
\)

\subsection{Quadrature Rules}
Let \( \mathbbm{D} \) represent a domain and \( h \) be an integrable function defined by \( h: \mathbbm{D} \to \mathbb{R} \). Consider the space-time domain \( \mathbbm{D}= \mathbf{D}_T = [0, T] \times \mathbf{D} \subset \mathbb{R}^d \), where \( \bar{d} = 2d + 1 \geq 1 \). The function \( h \) is given on \(  \mathbbm{D} \) as follows:   
\begin{equation}  
h = \int\limits_{ \mathbbm{D} } h(z) \, dz,
\end{equation}  
where \( dz \) denotes the \( \bar{d} \)-dimensional Lebesgue measure. For quadrature, we select points \( z_i \in  \mathbbm{D} \) for \( 1 \leq i \leq N \), along with corresponding weights \( w_i \). The quadrature approximation then takes the form:  
\begin{equation}  
h_N = \sum_{i=1}^{N} w_i h(z_i).  
\end{equation}  
Here, \( z_i \) are the quadrature points. For moderately high-dimensional problems, low-discrepancy sequences such as Sobol and Halton sequences can be employed as quadrature points. For very high-dimensional problems (\( d \gg 20 \)), Monte Carlo quadrature becomes the preferred method for numerical integration \cite{caflisch1998monte}, where the quadrature points are selected randomly and independently.  \\
For a set of weights \( w_i \) and quadrature points \( y_i \), we assume that the associated quadrature error adheres to the following bound:
\begin{equation}
    \label{eq:assm3}
    \left|\overline{h} - \overline{h}_N\right| \leq C_{quad}
    \left(\|h\|_{Z^{\ast}},\bar{d} \right) N^{-\alpha},
\end{equation}
for some $\alpha > 0$. 
\subsection{Training Points}\label{Training}
Physics-informed neural networks require four types of training points as described in \cite{mishra2021physics, mishra2022estimates}: interior points \( \textit{S}_{\text{int}}\), temporal boundary points \(\textit{S}_{\text{tb}}\), spatial boundary points \(\textit{S}_{\text{sb}}\), and data points \(\textit{S}_{\boldsymbol{d}}\). Figs.\ref{figt} and \ref{figtt} illustrate the training points used in forward and inverse problems. The training set for forward problems is given by  
\[
\boldsymbol{S} = \textit{S}_{\text{int}} \cup \textit{S}_{\text{sb}} \cup  \textit{S}_{\text{tb}}.
\]  
For the inverse problem, additional training points are required, ie, data training points \( \textit{S}_{\boldsymbol{d}} \). The defined training points \( \textit{S}_{\text{int/sb/tb}/\boldsymbol{d} }\) correspond to quadrature points with weights \( w_{j}^{\text{int}/\text{sb}/\text{tb}/\boldsymbol{d}} \), determined by an appropriate quadrature rule.  
In domains \( \mathbf{D} \) that are logically rectangular, the training set can be constructed using Sobol points or randomly selected points. Thus, we can define these training points \(N\) as follows:

\begin{figure}[htbp]
\centering
\includegraphics[height=0.4\textheight]{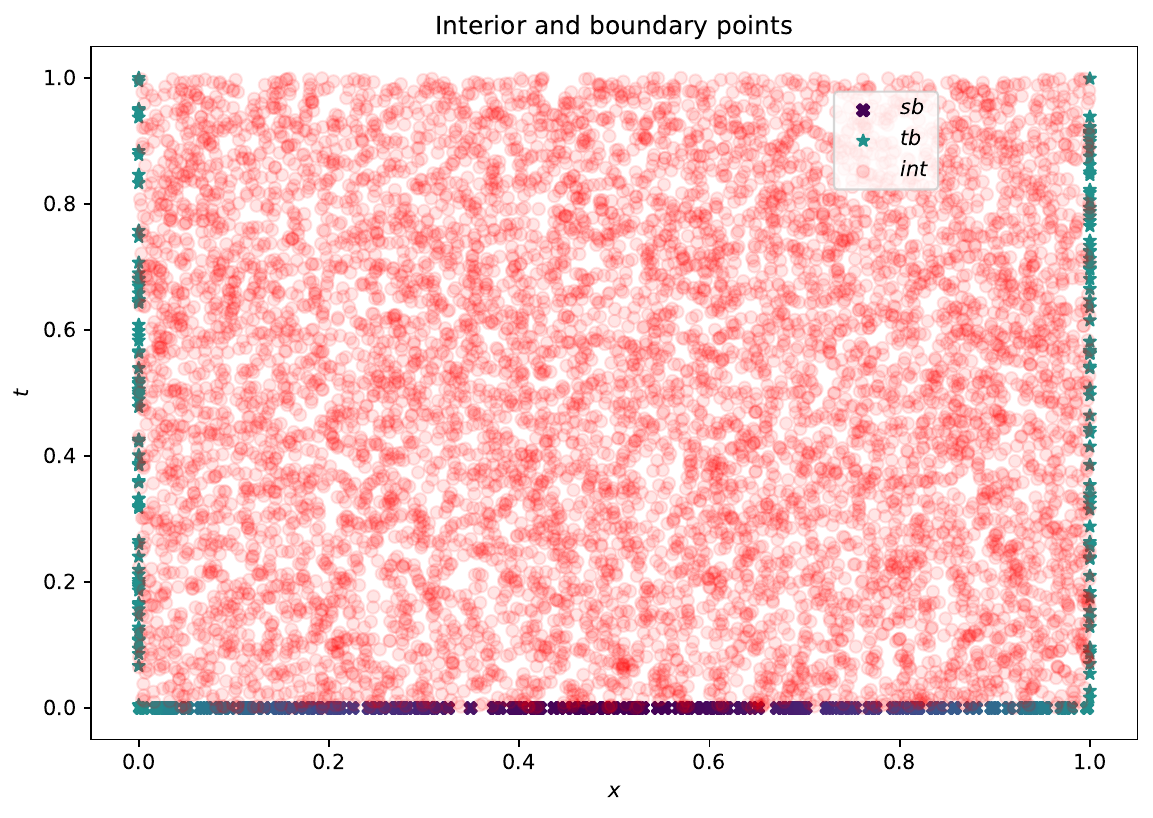}
\caption{\textbf{Training points (forward problem):} The training set \( \boldsymbol{S} \) consists of randomly chosen points. Red dots denote interior points, whereas green and blue dots correspond to temporal and spatial boundary points.
}
\label{figt}
\end{figure} 

\subsubsection{Interior training points}
The interior training points are denoted by $\textit{S}_{\text{int}} = \left\lbrace z_{j}^{\text{int}} \right\rbrace $ for $ 1 \leq j \leq N_{\text{int}} $, where $ z_{j}^{\text{int}} = \left( t_{j}^{\text{int}}, x_{j}^{\text{int}}\right) $. Here, $t_{j}^{\text{int}} \in \left[ 0, T \right] $, $x_{j}^{\text{int}} \in  \mathbf{D} $ for all \( j \).

\subsubsection{Temporal boundary training points}
The temporal boundary points are represented as \( \textit{S}_{\text{tb}} = \left\lbrace z_{j}^{\text{tb}} \right\rbrace \), for \( 1 \leq j \leq N_{\text{tb}} \), with \( z_{j}^{\text{tb}} = \left( x_{j}^{\text{tb}} \right) \). Here, \( x_{j}^{\text{tb}} \in \mathbf{D} \), \(\forall\) \( j \).

\subsubsection{Spatial boundary training points}
The spatial boundary points are denoted as \( \textit{S}_{\text{sb}} = \left\lbrace z_{j}^{\text{sb}} \right\rbrace \), for \( 1 \leq j \leq N_{\text{sb}} \), where \( z_{j}^{\text{sb}} = \left( t_{j}^{\text{tb}}, x_{j}^{\text{tb}} \right) \). In this case, \( t_{j}^{\text{tb}} \in \left[ 0, T \right] \), \( x_{j}^{\text{tb}} \in \partial \mathbf{D} \).
\subsubsection{Data training points}
The data training set is defined as \( \textit{S}_{\boldsymbol{d}} = \left\lbrace y_{j}^{\boldsymbol{d}} \right\rbrace \) for \( 1 \leq j \leq N_{\boldsymbol{d}} \), where \( y_{j}^{\boldsymbol{d}} \in  \mathbf{D}_{T}' \).\\

\begin{figure}[htbp]
\centering
\includegraphics[height=0.3\textheight]{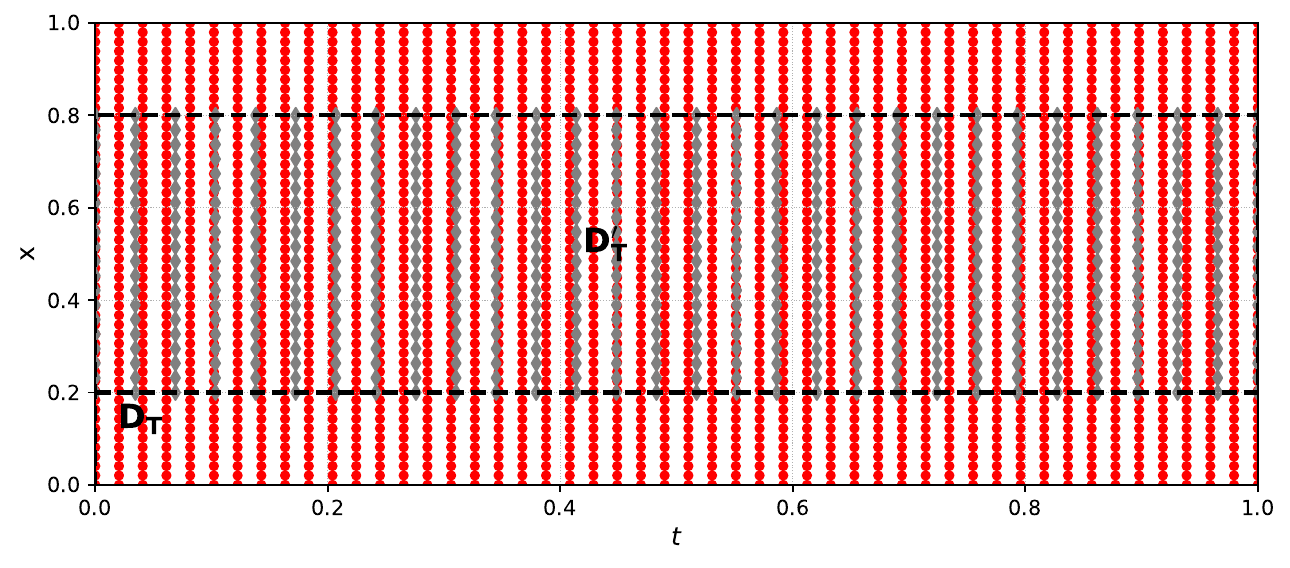}
\caption{Training points (inverse problem): A visualization of the training set \( \boldsymbol{S} \) with randomly sampled training points. Red dots denote interior points, while gray dots correspond to Sobol points.
}
\label{figtt}
\end{figure}
\begin{figure}[htbp]
\centering
\includegraphics[height=0.35\textheight]{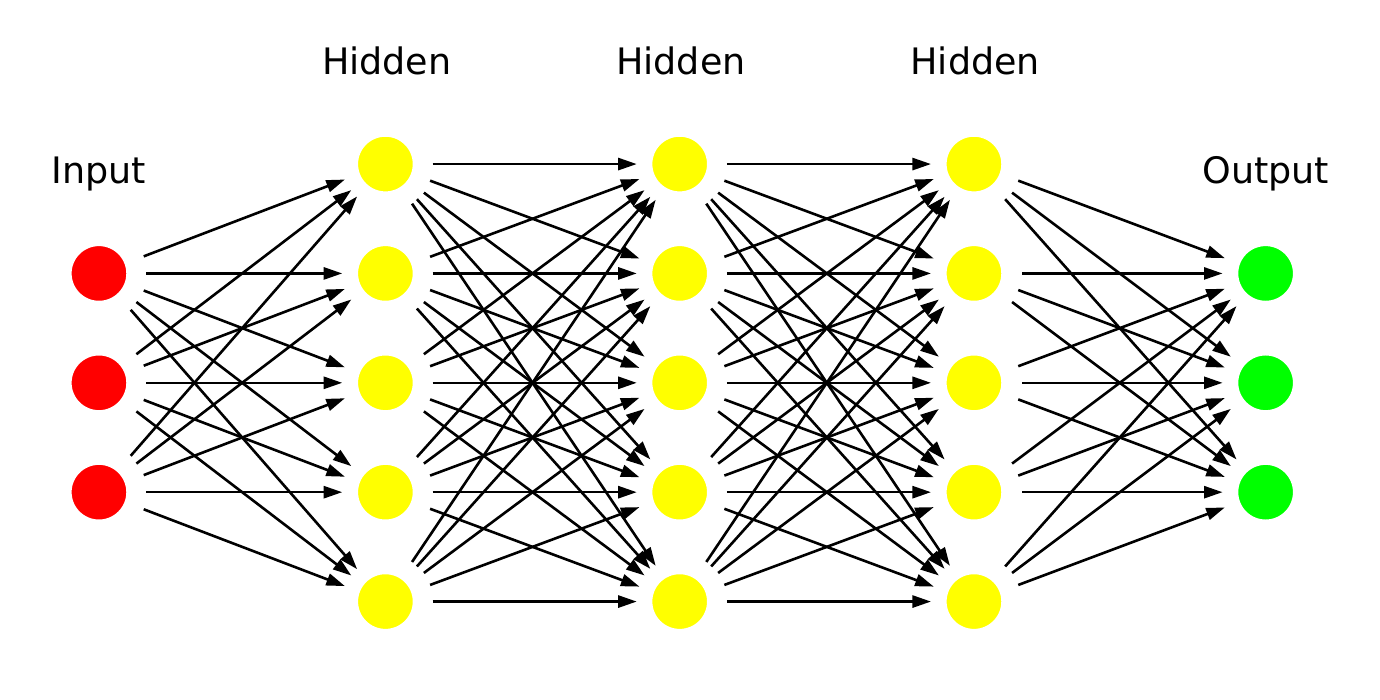}
\caption{In this diagram, neurons in the input layer are shown in red, those in the hidden layer are represented in yellow, and neurons in the output layer are depicted in green.
}
\label{figttt}
\end{figure}

\subsection{Neural Networks}
The PINN operates as a feed-forward neural network, as depicted in Fig.~\ref{figttt}. Without an activation function, a neural network functions similarly to a multiple regression model. The activation function introduces non-linearity, enabling the network to learn and perform complex tasks. Examples of activation functions include sigmoid, hyperbolic tangent (tanh), and ReLU \cite{goodfellow2016deep}. 
The network receives an input \( y = (t, x) \in \mathbbm{D} \), and can be formulated as an affine transformation:
\begin{equation}\label{eqn:N}
u_{\theta}(y) = C_K \circ  \sigma \circ C_{K-1} \circ \dots \circ \dots \sigma \circ C_1 (y).
\end{equation}
Here, \( \circ \) denotes function composition, and \( \sigma \) represents activation functions.
For each layer \( k \) where \( 1 \leq k \leq K \), the transformation is given by:
\begin{equation}
C_k z_k = W_k z_k + b_k, \quad \text{where} \quad W_k \in \mathbb{R}^{d_{k+1} \times d_k}, \, z_k \in \mathbb{R}^{d_k}, \quad \text{and} \quad b_k \in \mathbb{R}^{d_{k+1}}.
\end{equation}
To maintain consistency, we define \( d_1 = \bar{d} = 2d + 1 \), where \( d \) is the spatial dimension, and set \( d_K = 1 \) for the output layer. Structurally, the network consists of an input layer, an output layer, and \( K-1 \) hidden layers, subject to the condition \( 1 < K < \mathbb{N} \).

Each hidden layer \( k \), comprising \( d_k \) neurons, processes an input vector \( z_k \in \mathbb{R}^{d_k} \). The transformation begins with the linear mapping \( C_k \), followed by the application of the activation function \( \sigma \). The total number of neurons in the network is given by \( 2d + 2 + \sum_{k=2}^{K-1} d_k \).

The set of network parameters, including weights and biases, is denoted as \( \theta = \left\lbrace W_k, b_k \right\rbrace \). Additionally, the weight parameters alone are represented as \( \theta_w = \left\lbrace W_k \right\rbrace \) for all \( 1 \leq k \leq K \) \cite{mishra2021physics, mishra2022estimates}. The parameters \( \theta \) belong to the space \( \theta' \subset \mathbb{R}^P \), where \( P \) represents the total number of parameters:
\begin{equation}
P = \sum_{k=1}^{K-1} \left( d_k + 1 \right) d_{k+1}.
\end{equation}

\subsection{Residuals}
This section describes the residuals linked to different training sets, including interior, temporal, spatial and data points used for inverse problems. The primary objective is to minimize these residuals. Optimization will incorporate stochastic gradient descent techniques, such as ADAM for first-order optimization, along with higher-order methods like variants of the BFGS algorithm. The PINN \( u_{\theta} \) depends on tuning parameters \( \theta \in \theta' \), which correspond to the network's weights and biases. In a standard deep learning framework, training involves adjusting these parameters to ensure that the neural network approximation \( u_{\theta} \) closely matches the exact solution \( u \). 
The interior residual is defined as:  
 
\begin{equation}\label{eq:main_res}
\begin{aligned}
\mathfrak{R}_{\text{int},\theta} = \mathfrak{R}_{\text{int},\theta}(t,x), \quad \forall (t,x) \in [0,T] \times \mathbf{D}.
\end{aligned}
\end{equation}  
It can be expressed in terms of the differential operator as follows:  
\begin{equation}\label{eqn:Ra}
\begin{aligned}
\mathfrak{R}_{\text{int}, \theta} =  \mathcal{D}(u_{\theta}) - \mathbf{f}.
\end{aligned}
\end{equation}  

Residuals corresponding to initial, boundary, and data points are formulated as:

\begin{equation}\label{eqn:Rab}
\begin{aligned}
\mathfrak{R}_{\text{tb}} &= \mathfrak{R}_{\text{tb}, \theta} = u_{\theta} - u_{0}, \quad \forall x \in \mathbf{D}, \\
\mathfrak{R}_{\text{sb}} &= \mathfrak{R}_{\text{sb}, \theta} = u_{\theta} - u_{b}, \quad \forall (t, x) \in \partial{\mathbf{D}}.
\end{aligned}
\end{equation}
Additionally, the residual for data points is given by:
\begin{equation}\label{eqn:Residual_data}
\mathfrak{R}_{ {\boldsymbol{d}}} = \mathcal{L}(u_{\theta}) - g, \quad \forall (t, x) \in \mathbf{D}'_{T}.
\end{equation}
\subsubsection*{Residual-Based Weighting}
In this section, the PDE residual is modulated by a weighting function to adaptively emphasize or de-emphasize specific collocation points during training. 
We consider two distinct weighting strategies, denoted as \textbf{RWa} and \textbf{RWb}, defined as follows.

\subsubsection*{RWa}
Mathematically, we define
\[
\mathrm{W_{RWa}} = \sigma\left(-\lambda_F \, \mathrm{detach}(\mathfrak{R}_{\mathrm{int}, \theta})\right),
\]
where
\[
\sigma(z) = \frac{1}{1 + e^{-z}}
\]
is the sigmoid activation function, \(0 < \lambda_F \leq 1\) is a scaling parameter, and 
$\mathrm{detach}(\mathfrak{R}_{\mathrm{int}, \theta})$ indicates that the interior residual $\mathfrak{R}_{\mathrm{int}, \theta}$ is treated as a constant during backpropagation to prevent gradient flow through the weighting term.

\subsubsection*{RWb}
Mathematically, we define
\[
\mathrm{W_{RWb}} = \tanh\!\left( \mathrm{softplus}\left(-\lambda_H \, \mathrm{detach}(\mathfrak{R}_{\mathrm{int}, \theta})\right) \right),
\]
where
\[
\mathrm{softplus}(z) = \log\!\left( 1 + e^{z} \right)
\]
is the softplus activation function, \(0 < \lambda_H \leq 1\) is a scaling parameter, and 
$\mathrm{detach}(\mathfrak{R}_{\mathrm{int}, \theta})$ indicates that the interior residual is treated as a constant during backpropagation.
After applying the scaling through the weight functions $\mathrm{W_{RWa}}$ or $\mathrm{W_{RWb}}$, 
the original PDE residual $\mathfrak{R}_{\mathrm{int},\theta}$ is transformed into the weighted residual
\[
\mathfrak{R}'_{\mathrm{int},\theta}
= \mathrm{W} \cdot \mathfrak{R}_{\mathrm{int},\theta},
\]
where $\mathrm{W} \in \{\mathrm{W_{RWa}}, \mathrm{W_{RWb}}\}$ denotes the chosen weight function.

The goal is to determine the optimal tuning parameters \( \theta \in \theta' \) that minimize the residual in the forward problem:
\begin{equation}\label{eqn:R}
  \theta^{\ast} \in \theta' : \theta^{\ast} = \arg \min_{\theta \in \theta'} \left( \Vert \mathfrak{R'}_{\text{int}, \theta} \Vert^{2}_{L^{2}(\mathbf{D}_{T})} + \Vert \mathfrak{R}_{\text{sb}, \theta} \Vert^{2}_{L^{2}(\mathbf{[0,T] \times\partial D})} + \Vert \mathfrak{R}_{\text{tb}, \theta} \Vert^{2}_{L^{2}(\mathbf{D})} \right).
\end{equation}
For the inverse problem, an additional term corresponding to the data residual \( \mathrm{R}_{\boldsymbol{d}} \) is introduced in Eq.~\eqref{eqn:R}, leading to the following minimization problem:
\begin{equation}\label{eqn:Rb}
  \theta^{\ast} \in \theta' : \theta^{\ast} = \arg \min_{\theta \in \theta'} 
  \left( 
  \Vert \mathfrak{R'}_{\text{int}, \theta} \Vert^{2}_{L^{2}( \mathbf{D}_{T})} 
  + \Vert \mathfrak{R}_{\text{sb}, \theta} \Vert^{2}_{L^{2}( \mathbf{[0,T] \times\partial D})}  + \Vert \mathfrak{R}_{\boldsymbol{d}, \theta} \Vert^{2}_{L^{2}( \mathbf{D}'_{T})}  +\Vert \mathfrak{R}_{\text{tb}, \theta} \Vert^{2}_{L^{2}(\mathbf{D})}
  \right).
\end{equation}
Since the integrals in Eqs.~\eqref{eqn:R} and \eqref{eqn:Rb} involve the \( L^{2} \) norm, an exact computation is not feasible. Instead, numerical quadrature methods are employed for approximation.

\subsection{Loss Functions and Optimization}\label{loss}
The integrals \eqref{eqn:R} is approximated using the following loss functions for forward problems:
\begin{equation}
\mathscr{L}_{1}(\theta)=\sum\limits_{j=1}^{N_{sb}}w_{j}^{sb}\vert  \mathfrak{R}_{\text{sb}, \theta} (z_{j}^{sb}) \vert^{2}+ \sum\limits_{j=1}^{N_{tb}}w_{j}^{tb}\vert  \mathfrak{R}_{\text{tb}, \theta} (z_{j}^{tb}) \vert^{2}+\lambda \sum\limits_{j=1}^{N_{int}}w_{j}^{int}\vert  \mathfrak{R'}_{\text{int}, \theta} (z_{j}^{int}) \vert^{2} \label{eqn:La},
\end{equation}
where $\lambda$ is residual parameter.
The integrals \eqref{eqn:Rb} is approximated using the following loss functions for inverse problems:
\begin{equation}
\mathscr{L}_{2}(\theta)=\sum\limits_{j=1}^{N_{d}}w_{j}^{d}\vert  \mathfrak{R}_{\boldsymbol{d}, \theta} (z_{j}^{d}) \vert^{2}+\sum\limits_{j=1}^{N_{sb}}w_{j}^{sb}\vert  \mathfrak{R}_{\text{sb}, \theta} (z_{j}^{sb}) \vert^{2}+ \sum\limits_{j=1}^{N_{tb}}w_{j}^{tb}\vert  \mathfrak{R}_{\text{tb}, \theta} (z_{j}^{tb}) \vert^{2} + \lambda \sum\limits_{j=1}^{N_{int}}w_{j}^{int}\vert  \mathfrak{R'}_{\text{int}, \theta} (z_{j}^{int}) \vert^{2}  \label{eqn:Lb},
\end{equation}
where $\lambda$ is residual parameter.
The loss function minimization is regularized as follows:
\begin{equation}\label{eqn:L}
\theta^{\ast} = \arg \min_{\theta \in \theta'}(\mathscr{L}_{i}(\theta)+ \lambda_{reg}\mathscr{L}_{reg}(\theta)),
\end{equation}
where \( i = 1, 2 \). In deep learning, regularization helps prevent over-fitting. A common form of regularization is \( \mathscr{L}_{\text{reg}}(\theta) = \Vert \theta \Vert_q^q \), where \( q = 1 \) (for \( L^{1} \) regularization) or \( q = 2 \) (for \( L^{2} \) regularization).
The regularization parameter \( \lambda_{\text{reg}} \) balances the trade-off between the loss function \( \mathscr{L} \) and the regularization term, where \( 0 \leq \lambda_{\text{reg}} \ll 1 \).
Stochastic gradient descent algorithms such as ADAM will be used for optimization, as they are widely adopted for first-order methods. Additionally, second-order optimization strategies, including different versions of the BFGS algorithm, may be employed. The objective is to determine the optimal neural network solution \( u^{\ast} = u_{\theta^{\ast}} \) using the training dataset. The process begins with an initial parameter set \( \bar{\theta} \in \theta' \), and the corresponding network output \( u_{\bar{\theta}} \), residuals, loss function, and gradients are computed iteratively. Ultimately, the optimal solution, denoted as \( u^{\ast} = u_{\theta^{\ast}} \), is obtained through PINN. The local minimum in Eq.~\eqref{eqn:L} is approximated as \( \theta^{\ast} \), yielding the deep neural network solution \( u^{\ast} = u_{\theta^{\ast}} \), which serves as an approximation to \( u \) in low grades tumors models.

The hyper-parameters used in numerical experiments are summarized in Table~\ref{table_1}. The PINN framework for solving the reaction diffusion equations follows the methodologies outlined in \cite{mishra2021physics, mishra2022estimates, bai2021physics, mishra2023estimates}. The illustration in Fig.~\ref{figttt} represents the PINN framework.
 Below, Algorithm \ref{alg1} is presented for forward problems, while Algorithm \ref{alg2} addresses inverse problems (RW-PINN):
\begin{table}[!ht]
\centering
\caption{The configurations of hyper-parameters and the frequency of retraining utilized in ensemble training for PINN.}

\begin{tabular}{||c c c c c c||}
\hline
Experiments & $K-1$ & $\bar{d}$ & $\lambda$ & $\lambda_{\text{reg}}$ & $n_{\theta}$ \\ [0.5ex] 
 \hline
\ref{Example2} & 4 & 20 & 0.1, 1, 10 & 0 & 10\\ 
\hline
\ref{Example3} & 4 & 20 & 0.1, 1, 10 & 0 & 10,4 \\ 
\hline
\ref{Example3+1} & 4 & 20 & 0.1, 1, 10 & 0 & 12 \\ 
\hline
\ref{Example4} & 4 & 28 & 0.1, 1, 10 & 0 & 10,4 \\ 
\hline
\ref{Example5} & 4 & 20 & 0.1, 1, 10 & 0 & 10 \\ 
\hline
\ref{Example6} & 4 & 20 & 0.1, 1, 10 & 0 & 10 \\ 
\hline
\ref{Example7} & 4 & 20, 36, 42 & 0.1, 1, 10 & 0 & 10,10,4 \\ 

\hline
\end{tabular}
\label{table_1}
\end{table}

\begin{algorithm} 
\label{alg1} {\bf The RW-PINN framework is employed for estimating reaction diffusion equations in forward problems}
\begin{itemize}
\item [{\bf Inputs}:] Define the computational domain, problem data, and coefficients for the reaction diffusion equations. Specify quadrature points and weights for numerical integration. Choose a non-convex gradient-based optimization method for training the neural network.
\item [{\bf Aim}:] Develop a RW-PINN approximation \( u^{\ast} = u_{\theta^{\ast}} \) for solving the model.

\item [{\bf Step $1$}:]  Select the training points following the methodology described in Section~\ref{Training}.
\item [{\bf Step $2$}:] Initialize the network with parameters \( \bar{\theta} \in \theta' \) and compute the following: neural network output \( u_{\bar{\theta}} \) Eq.~(\ref{eqn:N}), PDE residual Eq.~(\ref{eqn:Ra}), boundary residuals Eq.~(\ref{eqn:Rab}), loss function Eq.~(\ref{eqn:La}), Eq.~(\ref{eqn:L}), and gradients required for optimization.
\item [{\bf Step $3$}:] Apply the optimization algorithm iteratively until an approximate local minimum \( \theta^{\ast} \) of Eq.~(\ref{eqn:L}) is obtained. The trained network \( u^{\ast} = u_{\theta^{\ast}} \) serves as the RW-PINN solution for the reaction diffusion equations.
\end{itemize}
\end{algorithm}

\begin{figure}[htbp]
\centering
\includegraphics[height=0.4\textheight]{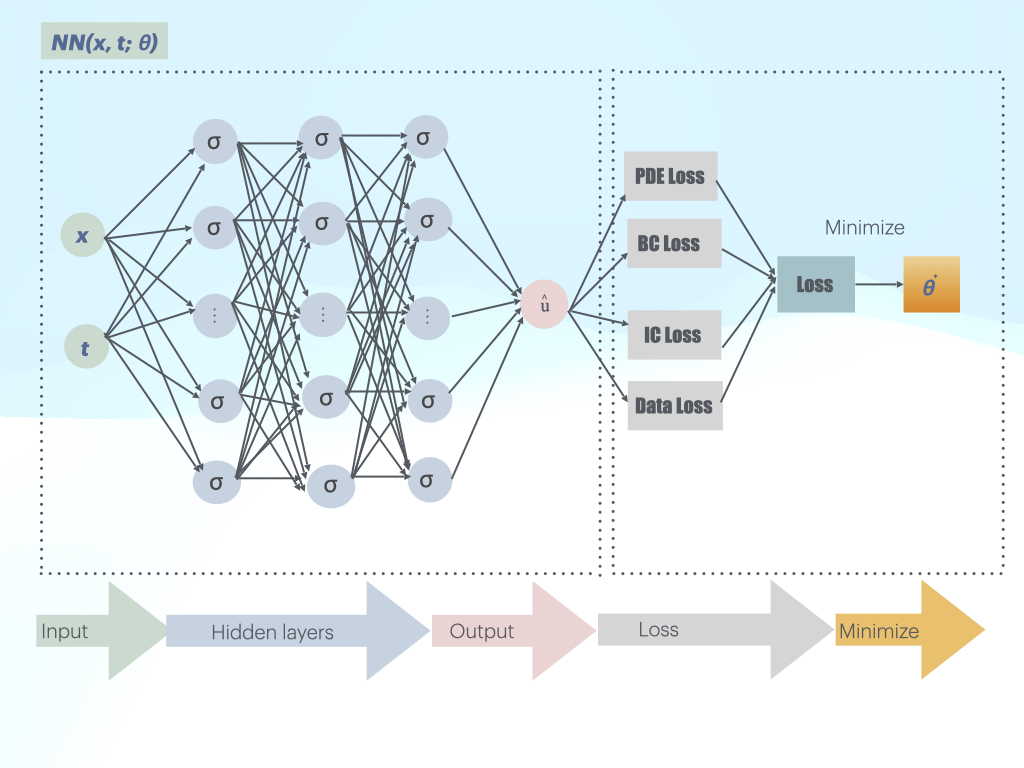}
\caption{Schematic representation of the PINN framework.}
\label{figttt}
\end{figure}

\begin{algorithm} 
\label{alg2} {\bf The RW-PINN framework is employed for estimating reaction diffusion equations in  inverse problems}
\begin{itemize}
\item [{\bf Inputs}:] Define the computational domain, problem data, and coefficients for the reaction diffusion equations. Specify quadrature points and weights for numerical integration. Choose a suitable non-convex gradient-based optimization method.
\item [{\bf Aim}:]  Construct a PINN approximation \( u^{\ast} = u_{\theta^{\ast}} \) to estimate the solution \( u \) of reaction diffusion equations for inverse problems.
\item [{\bf Step $1$}:] Select training points according to the methodology outlined in Section~\ref{Training}.
\item [{\bf Step $2$}:] Initialize the neural network with parameters \( \bar{\theta} \in \theta' \) and compute the following components: neural network output \( u_{\bar{\theta}} \) Eq.~(\ref{eqn:N}), PDE residual Eq.~(\ref{eqn:Ra}), data residuals Eq.~(\ref{eqn:Residual_data}), loss function Eq.~(\ref{eqn:Lb}), Eq.~(\ref{eqn:L}), and gradients for optimization.
\item [{\bf Step $3$}:] Apply the optimization algorithm iteratively until an approximate local minimum \( \theta^{\ast} \) of Eq.~(\ref{eqn:L}) is reached. The trained network \( u^{\ast} = u_{\theta^{\ast}} \) serves as the RW-PINN solution for the the reaction diffusion equations.
\end{itemize}
\end{algorithm}

\subsection{Estimation on Generalization Error} 
Let the spatial domain be \(\mathbf{D} = [0, 1]^{d}\), where \(d\) denotes the spatial dimension. This section focuses on obtaining an accurate estimate of the generalization error, also known as the total error, for the trained neural network \(u^{\ast} = u_{\theta^{\ast}}\). This result arises from the application of the RW-PINN algorithms \ref{alg1} and \ref{alg2}. The error can be expressed as follows:
\begin{equation}
    \label{eq:gen}
    \boldsymbol{\mathcal{E}}_{G}:= \left(\int\limits_0^T \int\limits_0^1 |u(t,x) - u^{\ast}(t,x)|^2 dx dt \right)^{\frac{1}{2}}.
\end{equation}
This approach is outlined in \cite{mishra2021physics}, \cite{bai2021physics}, \cite{mishra2022estimates} and \cite{mishra2023estimates}. This section provides an estimation of the generalization error, as defined in equation \eqref{eq:gen}, based on the training error.
For the abstract PDE equation \eqref{eq:pde}, the generalization error is analyzed by expressing it in terms of the training error, which is defined as follows:
\begin{align}
    (\boldsymbol{\mathcal{E}}_{T}^{int})^2 &= \sum\limits_{n=1}^{N_{int}} w^{int}_n|\mathfrak{R'}_{int,\theta^{\ast}}|^2, \quad
    (\boldsymbol{\mathcal{E}}_T^{sb})^2 = \sum\limits_{n=1}^{N_{sb}} w^{sb}_n| \mathfrak{R}_{sb,\theta^{\ast}}|^2, \notag \\
    (\boldsymbol{\mathcal{E}}_T^{tb})^2 &= \sum\limits_{n=1}^{N_{tb}} w^{tb}_n| \mathfrak{R}_{tb,\theta^{\ast}}|^2, \quad
    (\boldsymbol{\mathcal{E}}_T^{d})^2 = \sum\limits_{n=1}^{N_{\boldsymbol{d}}} w^{\boldsymbol{d}}_n| \mathfrak{R}_{\boldsymbol{d},\theta^{\ast}}|^2.
    \label{eq:training_errors}
\end{align}
For the EFK equation, we can modify \(\boldsymbol{\mathcal{E}}_T^{sb}\) as in \cite{bai2021physics}:
\begin{equation}
    (\boldsymbol{\mathcal{E}}_T^{sb})^2 =\sum\limits_{n=1}^{N_{sb}} \sum\limits_{i=1}^{4} w^{sb}_n| \mathfrak{R}_{sbi,\theta^{\ast}}|^2.
\end{equation}
The training error can be directly computed \emph{a posteriori} using the loss function equation \eqref{eqn:L}. Additionally, the following assumptions on the quadrature error are required, similar to equations \eqref{eqn:L} and \eqref{eqn:Lb}. For any function \( h \in C^k(\mathbf{D}) \), the quadrature rule, defined using quadrature weights \( w^{tb}_n \) at points \( x_n \in \train_{tb} \) for \( 1 \leq n \leq N_{tb} \), satisfies the bound  
\begin{equation}
    \label{eq:hquad1}
    \left| \int\limits_{\mathbf{D}} h(x) dx - \sum\limits_{n=1}^{N_{tb}} w^{tb}_n h(x_n) \right| \leq C^{tb}_{quad}(\|h\|_{C^k}) N_{tb}^{-\alpha_{tb}}.
\end{equation}
For any function $g \in C^k(\partial \mathbf{D} \times [0,T])$, the quadrature rule corresponding to quadrature weights $w^{sb}_n$ at points $(x_n,t_n) \in \train_{sb}$, with $1 \leq n \leq N_{sb}$, satisfies 
\begin{equation}
    \label{eq:hquad2}
    \left| \int\limits_0^T \int\limits_{\partial \mathbf{D}} h(x,t) ds(x) dt - \sum\limits_{n=1}^{N_{sb}} w^{sb}_n h(x_n,t_n)\right| \leq C^{sb}_{quad}(\|h\|_{C^k}) N_{sb}^{-\alpha_{sb}}.
\end{equation}
Finally, for any function $h \in C^\ell(\mathbf{D} \times [0,T])$, the quadrature rule corresponding to quadrature weights $w^{int}_n$ at points $(x_n,t_n) \in \train_{int}$, with $1 \leq n \leq N_{int}$, satisfies 
\begin{equation}
    \label{eq:hquad3}
    \left| \int\limits_0^T \int\limits_{\mathbf{D}} h(x,t) dx dt - \sum\limits_{n=1}^{N_{int}} w^{int}_n h(x_n,t_n)\right| \leq C^{int}_{quad}(\|h\|_{C^\ell}) N_{int}^{-\alpha_{int}}.
\end{equation}
In the above, $\alpha_{int},\alpha_{sb},\alpha_{tb} > 0$ and in principle, different-order quadrature rules can be used.
The generalization error for the Burgess equation and the EFK equation, obtained using Algorithm~\ref{alg1}, is given in the following form:
\begin{equation}
    \label{eq:hegenb}
    \boldsymbol{\mathcal{E}}_{G} \leq C_1 \left(\boldsymbol{\mathcal{E}}_T^{tb}+\boldsymbol{\mathcal{E}}_T^{int}+C_2(\boldsymbol{\mathcal{E}}_T^{sb})^{\frac{1}{2}} + (C_{quad}^{tb})^{\frac{1}{2}}N_{tb}^{-\frac{\alpha_{tb}}{2}} +  (C_{quad}^{int})^{\frac{1}{2}}N_{int}^{-\frac{\alpha_{int}}{2}} + C_2  (C_{quad}^{sb})^{\frac{1}{4}}N_{sb}^{-\frac{\alpha_{sb}}{4}} \right),
\end{equation}
where the constants \(C_1\) and \(C_2\) are shown in \ref{thm:bergess} and \ref{thm:1}.

\subsection{Stability and Convergence of Multilayer Neural Network}
This section presents the stability and convergence analysis of the neural network for both models. For convenience, let \( u_{\theta} = U \). 
\subsubsection{Stability of multilayer neural network}
Here, \( L^{\infty} \) bounds are derived for both models.

\begin{theorem}\label{thm:bound}  
Let \( U \) be a neural network solution to the equation  
\begin{equation}\label{eq:Burcon}  
    \frac{\partial U}{\partial t} = \frac{1}{2} \frac{\partial^2 U}{\partial x^2} + R(U),  
\end{equation}  
where the reaction term \( R(U) \) satisfies the Lipchitz condition \ref{appendix:buress_lip} along with one of the following conditions:  

\begin{itemize}  
    \item \textbf{(i) Linear growth condition:}  
    If there exists a constant \( C > 0 \) such that  
    \begin{equation}  
        |R(U)| \leq C(1 + |U|),  
    \end{equation}  
    then \( U \) is uniformly bounded in \( L^2(\mathbf{D_1}) \), i.e., there exists a constant \( M > 0 \) such that  
    \begin{equation}  
        \sup_{t \in [0,T]} \|U(t)\|_{L^2(\mathbf{D})} \leq M.  
    \end{equation}  

    \item \textbf{(ii) Exponential decay condition }  
    \begin{equation}
    |R(U)| \leq C e^{-\alpha U},
\end{equation}
for some constants \( C, \alpha > 0 \). Then \( U \) is uniformly bounded in \( L^\infty(\mathbf{D_1}) \) and satisfies the estimate:
\begin{equation}
    \sup_{t \in [0,T]} \|U(t)\|_{L^\infty(\mathbf{D_1})} \leq C' e^{-\beta t} \|U_0\|_{L^2(\mathbf{D_1})},
\end{equation}
for some constant \( C' > 0 \), decay rate \( \beta > 0 \) and initial condition \(U_{0}\).
\end{itemize}  

\end{theorem}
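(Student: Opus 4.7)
The plan is to treat the two parts separately, using an energy method for part (i) and a semigroup/Duhamel argument for part (ii), both starting from the given Burgess equation \eqref{eq:Burcon}.

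For part (i), I would begin by multiplying both sides of \eqref{eq:Burcon} by $U$ and integrating over $\mathbf{D_1}$. After integration by parts on the diffusion term, using the prescribed boundary conditions $\Gamma_1,\Gamma_2$ (which may contribute controllable boundary terms that can be absorbed into the constants), the standard energy identity gives
\begin{equation*}
\tfrac{1}{2}\tfrac{d}{dt}\|U(t)\|_{L^2(\mathbf{D_1})}^2 + \tfrac{1}{2}\|\partial_x U(t)\|_{L^2(\mathbf{D_1})}^2 \;=\; \int_{\mathbf{D_1}} R(U)\,U\,dx + (\text{b.t.}).
\end{equation*}
The linear growth assumption $|R(U)|\le C(1+|U|)$ together with Cauchy--Schwarz and Young's inequality yields $\int R(U)U\,dx \le C_1 + C_2\|U\|_{L^2}^2$. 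Dropping the nonnegative gradient term and applying Gronwall's inequality on $[0,T]$ then produces the desired uniform bound $\sup_{t\in[0,T]}\|U(t)\|_{L^2} \le M$, with $M$ depending on $T$, $C$, $|\mathbf{D_1}|$, and $\|U_0\|_{L^2}$.

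For part (ii), I would switch to a mild/Duhamel formulation. Writing $S(t)$ for the heat semigroup associated with $\tfrac{1}{2}\partial_{xx}$ on $\mathbf{D_1}$ (with the appropriate boundary conditions, so that $S(t)$ enjoys $L^2\to L^\infty$ smoothing of the form $\|S(t)v\|_{L^\infty}\le C\,t^{-1/4}e^{-\mu t}\|v\|_{L^2}$ on a bounded interval), one has
\begin{equation*}
U(t) \;=\; S(t)U_0 + \int_0^t S(t-s)R(U(s))\,ds.
\end{equation*}
The exponential decay hypothesis $|R(U)|\le Ce^{-\alpha U}$ would then be used, in combination with the part~(i) $L^2$ bound (whose proof carries over under the Lipschitz assumption \ref{appendix:buress_lip} by redoing the energy estimate) and a pointwise lower bound on $U$ coming from the maximum principle applied to a suitable comparison function, to show that $\|R(U(s))\|_{L^2}$ remains controlled and in fact decays. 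A fixed-point / bootstrapping argument in $L^\infty$, combined with the spectral-gap decay of $S(t)$, then yields an estimate of the form $\|U(t)\|_{L^\infty}\le C'e^{-\beta t}\|U_0\|_{L^2}$ with $\beta$ determined by the first eigenvalue of $-\tfrac{1}{2}\partial_{xx}$ and $\alpha$.

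The main obstacle is part (ii): the hypothesis $|R(U)|\le Ce^{-\alpha U}$ is one-sided and pointwise in $U$, so it does not by itself prevent blow-up when $U$ becomes very negative. Making the argument rigorous requires either (a) establishing an a priori lower bound on $U$ via a comparison principle to convert the exponential-in-$U$ bound into a usable pointwise bound on $R(U)$, or (b) combining the $L^2$ estimate from part (i) with parabolic smoothing so that the reaction term can be iterated into $L^\infty$. A secondary technical issue is the treatment of the boundary terms from integration by parts, since a neural network $U=u_\theta$ only satisfies the boundary data approximately; I would handle this by absorbing the boundary residuals into the constants $M$ and $C'$, with the understanding that these vanish as the spatial boundary training error $\boldsymbol{\mathcal{E}}_T^{sb}$ tends to zero.
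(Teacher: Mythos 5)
Your part (i) is essentially the paper's argument: multiply by $U$, integrate by parts to discard the nonpositive diffusion contribution, bound $\int U R(U)\,dx$ by $C\int(1+|U|^2)\,dx$ via the linear growth hypothesis, and close with Gronwall. The only cosmetic difference is that the paper simply assumes homogeneous Dirichlet data $U=0$ on $\partial\mathbf{D_1}$ at this step, whereas you propose to carry the boundary residuals along and absorb them into the constants.

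For part (ii) you take a genuinely different route. The paper stays with energy methods: it multiplies \eqref{eq:Burcon} by $|U|^{p-2}U$, integrates by parts to obtain $-(p-1)\int|U|^{p-2}(\partial_x U)^2\,dx\leq 0$, bounds the reaction term by $C\int|U|^{p-2}U e^{-\alpha U}\,dx$, asserts that $|U|^{p-1}e^{-\alpha U}\leq Ce^{-\alpha U/2}$ for large $|U|$, and then invokes Moser iteration to pass from uniform-in-$p$ control to the $L^\infty$ decay estimate. Your Duhamel/semigroup strategy would instead extract the decay rate $\beta$ from the spectral gap of $\tfrac12\partial_{xx}$ together with $L^2\to L^\infty$ smoothing; this is arguably better suited to producing the specific factor $e^{-\beta t}\|U_0\|_{L^2}$, whereas Moser iteration yields an $L^\infty$ bound whose time decay must be tracked separately. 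The obstacle you flag is genuine, and it is not resolved in the paper either: the hypothesis $|R(U)|\leq Ce^{-\alpha U}$ is one-sided, controlling $R$ only where $U$ is large and positive, and the paper's inequality $|U|^{p-1}e^{-\alpha U}\leq Ce^{-\alpha U/2}$ fails as $U\to-\infty$. Either argument therefore needs an a priori lower bound on $U$ (e.g.\ from a comparison principle) or a two-sided decay hypothesis on $R$; you are right to identify this as the missing ingredient rather than assert that it follows automatically.
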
  

\begin{proof}
Multiplying the equation (\ref{eq:Burcon})  by \( U \) and integrating over \( \mathbf{D_1} \):  
\begin{equation}
    \int_{\mathbf{D_1}} U \frac{\partial U}{\partial t} \,dx = \frac{1}{2} \int_{\mathbf{D_1}} U \frac{\partial^2 U}{\partial x^2} \,dx + \int_{\mathbf{D_1}} U R(U) \,dx.
\end{equation}
Using integration by part and the Dirichlet boundary condition \( U = 0 \) on \( \partial \mathbf{D_1} \),  
\begin{equation}
    \int_{\mathbf{D_1}} U \frac{\partial^2 U}{\partial x^2} \,dx = - \int_{\mathbf{D_1}} \left( \frac{\partial U}{\partial x} \right)^2 \,dx \leq 0.
\end{equation}
Thus,  
\begin{equation}
    \int_{\mathbf{D_1}} U \frac{\partial U}{\partial t} \,dx \leq \int_{\mathbf{D_1}} U R(U) \,dx.
\end{equation}
Applying the Linear Growth Condition,  
\begin{equation}
    \int_{\mathbf{D_1}} U R(U) \,dx \leq C \int_{\mathbf{D_1}}(1 + |U|^2) \,dx.
\end{equation}
Using Gronwall’s inequality,  
\begin{equation}
    \sup_{t \in [0,T]} \|U(t)\|_{L^2(\mathbf{D_1})} \leq M.
\end{equation}
Multiply equation (\ref{eq:Burcon}) by \( |U|^{p-2} U \) and integrate over \( \mathbf{D_1} \):
\begin{equation}
    \int_{\mathbf{D_1}} |U|^{p-2} U \frac{\partial U}{\partial t} \,dx = \frac{1}{2} \int_{\mathbf{D_1}} |U|^{p-2} U \frac{\partial^2 U}{\partial x^2} \,dx + \int_{\mathbf{D_1}} |U|^{p-2} U R(U) \,dx.
\end{equation}
Using integration by parts and the Dirichlet boundary condition:
\begin{equation}
    \int_{\mathbf{D_1}} |U|^{p-2} U \frac{\partial^2 U}{\partial x^2} \,dx = - (p-1) \int_{\mathbf{D_1}} |U|^{p-2} \left( \frac{\partial U}{\partial x} \right)^2 \,dx \leq 0.
\end{equation}
Thus,
\begin{equation}
    \int_{\mathbf{D_1}} |U|^{p-2} U \frac{\partial U}{\partial t} \,dx \leq \int_{\mathbf{D_1}} |U|^{p-2} U R(U) \,dx.
\end{equation}
Applying the exponential decay condition:
\begin{equation}
    \int_{\mathbf{D_1}} |U|^{p-2} U R(U) \,dx \leq C \int_{\mathbf{D_1}} |U|^{p-2} U e^{-\alpha U} \,dx.
\end{equation}
For large \( |U| \), the term
\begin{equation}
    |U|^{p-1} e^{-\alpha U} \leq C e^{-\alpha U /2}.
\end{equation}
Since \( e^{-\alpha U /2} \) decays exponentially and dominates any polynomial growth, the integral remains bounded. Using Moser’s iteration,
\begin{equation}
    \sup_{t \in [0,T]} \|U(t)\|_{L^\infty(\mathbf{D_1})} \leq C' e^{-\beta t} \|U_0\|_{L^2(\mathbf{D_1})}.
\end{equation}
This proves that \( U \) remains strictly bounded in \( L^\infty \) with exponential decay.  
\end{proof}
\begin{theorem}\label{thm:bound_efk}
Suppose that the EFK equation satisfies the Lipschitz condition of Lemma \ref{appendix:EEK_lip}, and the neural network solution \( U\) preserves an energy dissipation law. Moreover, let \( U_0 \in H^{2}_{0}(\mathbf{D}) \), so that the energy dissipation property holds:  
\begin{equation}  
E(U) \leq E(U_{0}).
\end{equation}  
Then, the solution \( U\) is bounded in the \( L^{\infty} \)-norm. 
\begin{proof} This theorem can be proved with the help of \ref{appendix:uniform_bound}. 
\end{proof}
\end{theorem}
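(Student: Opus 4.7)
The plan is to exploit the assumed energy dissipation to bound the $H^{2}$ norm of $U$ uniformly in time, and then pass to an $L^{\infty}$ bound via a Sobolev embedding argument.

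First, I would unpack the hypothesis $E(U) \leq E(U_{0})$ using the explicit form of the energy functional in~\eqref{eq:engergy2}. Every summand is non-negative and the prefactors are strictly positive (this uses $\gamma>0$), so dissipation immediately yields, for every $t \in [0,T]$, the three simultaneous bounds
\begin{equation*}
\|\Delta U(t,\cdot)\|_{L^{2}(\mathbf{D_{2}})}^{2} \leq \tfrac{2}{\gamma} E(U_{0}), \qquad \|\nabla U(t,\cdot)\|_{L^{2}(\mathbf{D_{2}})}^{2} \leq 2 E(U_{0}), \qquad \|1-U^{2}\|_{L^{2}(\mathbf{D_{2}})}^{2} \leq 4 E(U_{0}).
\end{equation*}
Because $E(U_{0})<\infty$ follows from $U_{0}\in H^{2}_{0}(\mathbf{D_{2}})$, these estimates are uniform in $t$.

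Second, since $U_{0}\in H^{2}_{0}(\mathbf{D_{2}})$ carries homogeneous Dirichlet data consistent with the boundary condition $U=\Gamma_{1}$ imposed on the EFK problem, I would invoke the standard elliptic regularity estimate $\|v\|_{H^{2}(\mathbf{D_{2}})} \leq C(\mathbf{D_{2}})\,\|\Delta v\|_{L^{2}(\mathbf{D_{2}})}$ valid for $v\in H^{2}_{0}(\mathbf{D_{2}})$. Applied to $U(t,\cdot)$, combined with step one, this upgrades the Laplacian bound to a full $H^{2}$ bound, $\|U(t,\cdot)\|_{H^{2}(\mathbf{D_{2}})} \leq C' \sqrt{E(U_{0})}$. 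Next, the Sobolev embedding $H^{2}(\mathbf{D_{2}}) \hookrightarrow L^{\infty}(\mathbf{D_{2}})$, valid in spatial dimensions $d \leq 3$ (which covers the physically relevant regimes addressed in the paper), yields
\begin{equation*}
\sup_{t \in [0,T]} \|U(t,\cdot)\|_{L^{\infty}(\mathbf{D_{2}})} \leq C'' \sqrt{E(U_{0})},
\end{equation*}
which is the desired estimate. The Lipschitz hypothesis of Lemma~\ref{appendix:EEK_lip} does not enter this analytic chain directly; its role is earlier, namely to make sense of the reaction term $F(U)=U^{3}-U$ when deriving the energy identity that underpins the dissipation assumption.

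The main obstacle, and the reason the statement defers to Lemma~\ref{appendix:uniform_bound}, is justifying the energy dissipation inequality itself for a neural network surrogate rather than an exact solution. Testing the residual equation against $U_{t}$ produces $\tfrac{d}{dt} E(U) \leq (\text{residual terms})$ instead of exact non-positivity, so one must track these extra terms via the training error and control them uniformly on $[0,T]$ before integrating in time. Once that bookkeeping is absorbed into Lemma~\ref{appendix:uniform_bound}, the analytic portion above (energy $\Rightarrow$ $H^{2}$ $\Rightarrow$ $L^{\infty}$) is short, standard, and the assertion of Theorem~\ref{thm:bound_efk} follows.
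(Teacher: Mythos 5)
Your argument is correct and is essentially the same route the paper takes: the paper's one-line proof simply defers to the cited uniform-boundedness result (Theorem \ref{appendix:uniform_bound}, from \cite{danumjaya2006numerical}), whose underlying proof is exactly your chain of energy dissipation $\Rightarrow$ termwise $L^{2}$ bounds $\Rightarrow$ $H^{2}$ control $\Rightarrow$ Sobolev embedding into $L^{\infty}$. The only caveat worth noting is that your elliptic-regularity step $\|v\|_{H^{2}}\leq C\|\Delta v\|_{L^{2}}$ needs $U(t,\cdot)$ to retain homogeneous boundary data (or an added $\|U\|_{L^{2}}$ term, which the $(1-U^{2})^{2}$ part of the energy supplies), a point the paper glosses over in exactly the same way by assuming $U_{0}\in H^{2}_{0}(\mathbf{D})$.
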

\subsubsection{Convergence of multilayer neural network}
This section establishes \( L^2 \) bounds and analyzes the convergence of the multilayer neural network \(U^n\) for both models. To establish the convergence of equations, we follow the approach used for the Cahn-Hilliard equation in \cite{Zhang2023}. From \ref{sec:forward}
\begin{equation} \label{eq:main_neural}
\mathcal{D}(U^{n}) = \mathbf{f}.  
\end{equation}
Additionally, assume that for each \( \mathbf{f} \in Y^{\ast} \), a unique solution \( u \in X^{\ast} \) exists for \eqref{eq:pde}, subject to approximate boundary and initial conditions given by  
\begin{equation}  
\mathcal{B}(U^{n}) = U_{b}^{n} \quad \text{on } \partial \mathbf{D} = \partial \mathbf{D_1}~ \text{or}~ \partial \mathbf{D_2}, \quad U^{n}(0, x) = U^{n}_{0} \quad \text{on } \mathbf{D}= \mathbf{D_1}~ \text{or} ~\mathbf{D_1}.  
\end{equation}  
Here, \( \mathcal{B} \) represents a boundary operator, \( U_{b}^{n} \) is the prescribed boundary data, and \( U^{n}_{0} \) denotes the initial condition.
\begin{theorem}\label{thm:bur_converge}
Let \( U^n_0 \in H^1_0(\mathbf{D_1}) \) be the initial neural network approximation of the Burgess equation. Under the assumptions of lemma \ref{appendix:buress_unique}, there exists a unique solution  
\( u \in H^1(\mathbf{D_1}) \cap H^2(\mathbf{D_1}) \) to the Burgess equation.  
Assume that the Burgess equation satisfies the Lipschitz condition given in \ref{appendix:buress_lip}, and that the sequence \( \{U^n\} \) is uniformly bounded in \( L^2([0,T]; H^1(\mathbf{D_1})) \). Then, the approximate solution \( U^n \) satisfies the following properties:  
\begin{enumerate}
    \item \textbf{Strong convergence in \( L^2 \)}: \( U^n \to u \) strongly in \( L^2(\mathbf{D_1}) \).  
    \item \textbf{Uniform convergence}: \( U^n \) converges uniformly to \( u \) in \( \mathbf{D_1} \).  
\end{enumerate}
Suppose \( U^n \) satisfies the PDE in a bounded domain \( \mathbf{D_1} \) with homogeneous Dirichlet boundary conditions:  
\begin{equation}
\frac{\partial U^n}{\partial t} = \frac{1}{2} \Delta U^n + R(U^n),
\end{equation}  
where the reaction term \( R(U^n) \) satisfies one of the following conditions:  
\begin{enumerate}
    \item \textbf{Linear Growth Condition:}  
    \begin{equation}
    |R(U^n)| \leq C(1 + |U^n|), \quad \text{for some constant } C > 0.
    \end{equation}  
    Under this condition, there exists a constant \( M > 0 \) such that:  
    \begin{equation}
    \sup_{t \in [0,T]} \| U^n(t) \|_{L^2(\mathbf{D_1})} \leq M.
    \end{equation}  

   \item \textbf{Exponential decay condition:}  
    \begin{equation}
    |R(U^n)| \leq C e^{-\lambda |U^n|}, \quad \text{for some constants } C > 0, \lambda > 0.
    \end{equation}  
    
    In this case, \( U^n \) exhibits moderate decay properties, ensuring:  
    \begin{itemize}
        \item Boundedness in \( L^2(\mathbf{D_1}) \),
    \end{itemize}
\end{enumerate}
Under the above assumptions, the sequences \( \{U^n\} \) and \( \{\Delta U^n\} \) remain uniformly bounded in \( L^2(\mathbf{D_1}) \) and \( H^2(\mathbf{D_1}) \), respectively.
\end{theorem}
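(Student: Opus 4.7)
The plan is to build the proof in four stages that mirror the standard Galerkin/approximation route, but adapted to the neural-network sequence $\{U^n\}$. First, I would obtain the uniform \emph{a priori} bounds. Multiplying \eqref{eq:main_neural} (in its Burgess form) by $U^n$ and integrating over $\mathbf{D_1}$, the Dirichlet boundary condition kills the boundary term and the diffusion contribution gives $-\|\partial_x U^n\|_{L^2}^2$. Under the linear-growth assumption $|R(U^n)|\le C(1+|U^n|)$, Young's inequality and Gronwall (exactly as in the proof of Theorem~\ref{thm:bound}) yield
\begin{equation}
\sup_{t\in[0,T]}\|U^n(t)\|_{L^2(\mathbf{D_1})}^2 + \int_0^T\|\partial_x U^n\|_{L^2(\mathbf{D_1})}^2\,dt \le M,
\end{equation}
independently of $n$. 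Under the exponential-decay assumption $|R(U^n)|\le Ce^{-\lambda|U^n|}$ the reaction term is automatically bounded, so the same estimate holds with an easier argument. To pick up control of $\Delta U^n$, I would test \eqref{eq:main_neural} with $-\Delta U^n$; the standard parabolic regularity argument, together with the Lipschitz bound on $R$ from Lemma~\ref{appendix:buress_lip}, produces a uniform bound on $\{U^n\}$ in $L^2(0,T;H^2(\mathbf{D_1}))$ and on $\{\partial_t U^n\}$ in $L^2(0,T;L^2(\mathbf{D_1}))$.

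Second, I would upgrade these bounds to strong convergence. The uniform $L^2(H^2)$ bound on $\{U^n\}$ together with the uniform $L^2(L^2)$ bound on $\{\partial_t U^n\}$ puts us in the classical Aubin--Lions setting: the embedding $H^2(\mathbf{D_1})\hookrightarrow H^1(\mathbf{D_1})$ is compact, so along a subsequence $U^{n}\to u^{\star}$ strongly in $L^2(0,T;H^1(\mathbf{D_1}))$ and in particular in $L^2(\mathbf{D_1}\times[0,T])$. Pointwise a.e.\ convergence follows (extracting a further subsequence), and the Lipschitz property of $R$ then gives $R(U^n)\to R(u^\star)$ in $L^2$, enabling passage to the limit in the weak formulation of \eqref{eq:main_neural}. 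By the uniqueness statement of Lemma~\ref{appendix:buress_unique}, $u^\star=u$, and a standard subsequence argument promotes the convergence to the full sequence. This establishes the claimed strong $L^2$ convergence in part~(1).

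Third, for the uniform convergence in part~(2), I would use the uniform $H^2(\mathbf{D_1})$ bound together with the Sobolev embedding $H^2(\mathbf{D_1})\hookrightarrow C(\overline{\mathbf{D_1}})$, which is valid since $\mathbf{D_1}\subset\mathbb{R}^d$ is bounded with sufficiently regular boundary and $d$ is low (the Burgess equation in this paper is posed in one spatial dimension). Combined with the equicontinuity in time coming from the $L^2(L^2)$ bound on $\partial_t U^n$ and the Arzel\`a--Ascoli theorem, the sequence $\{U^n\}$ is relatively compact in $C(\overline{\mathbf{D_1}}\times[0,T])$, and the limit must again be $u$. The boundedness statements at the end of the theorem are then a direct repackaging of the uniform bounds already produced in stage one.

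\paragraph{Main obstacle.}
The delicate step is stage~one under the exponential-decay hypothesis, because $R$ is not globally Lipschitz in a uniform way when $|U^n|$ is small and $R$ is merely dominated by $Ce^{-\lambda|U^n|}$: one has to combine the elementary $L^\infty$-type control coming from boundedness of the reaction with the Lipschitz bound of Lemma~\ref{appendix:buress_lip} to recover the $H^2$-regularity estimate in a form that passes to the limit. A secondary technical point is to verify that the neural-network iterates $U^n$ genuinely satisfy \eqref{eq:main_neural} in a sense strong enough to justify testing with $-\Delta U^n$; here one uses the smoothness of the activation $\sigma$ together with the construction of $u_\theta$ in \eqref{eqn:N}, so that $U^n\in C^\infty$ in its arguments and the energy computations are classical rather than distributional.
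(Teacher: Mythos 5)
The paper does not actually supply a proof of this theorem: its entire argument is the single sentence ``We can prove this theorem following the approach in \cite{Zhang2023}'' (a convergence analysis for the Cahn--Hilliard equation). Your four-stage plan --- energy estimates, Aubin--Lions compactness, identification of the limit via uniqueness, and uniform convergence through Sobolev embedding plus Arzel\`a--Ascoli --- is the standard route such a proof would take, and it is almost certainly what the cited reference does for its own equation. In that sense you have written down more than the paper itself provides, and the skeleton of your argument (test with $U^n$, then with $-\Delta U^n$; Gronwall; compactness in $L^2(0,T;H^1)$; Lipschitz continuity of $R$ to pass to the limit in the nonlinearity) is sound and consistent with the a priori estimates the paper does carry out in its Theorem \ref{thm:bound}.

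There is, however, one genuine gap that you should confront rather than relegate to a ``secondary technical point.'' If each $U^n$ \emph{exactly} satisfies $\partial_t U^n = \tfrac12\Delta U^n + R(U^n)$ with homogeneous Dirichlet data and the initial condition of the limit problem, then the uniqueness asserted in Lemma \ref{appendix:buress_unique} forces $U^n = u$ for every $n$, and the convergence claims are vacuous. The only non-trivial reading --- and the one consistent with the PINN setting of Section \ref{sec:forward} and with the Cahn--Hilliard analysis being imitated --- is that $U^n$ satisfies the equation only up to the interior, boundary, and initial residuals $\mathfrak{R}_{\mathrm{int}}$, $\mathfrak{R}_{\mathrm{sb}}$, $\mathfrak{R}_{\mathrm{tb}}$, which tend to zero as the loss is driven down. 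Your stage-one energy identities must then carry these residual source terms (exactly as in the error estimate of \ref{thm:bergess}), and your stage-two identification of the limit $u^\star$ with $u$ works only because those residuals vanish in the limit; without tracking them, there is nothing to identify the limit with. Incorporating the residuals is not a cosmetic change: it is the mechanism by which the convergence is actually driven, and it also resolves your stated worry about whether $U^n$ ``genuinely satisfies'' the equation --- it does not, and the proof must be organized around that fact.
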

\begin{proof}

We can prove this theorem following the approach in \cite{Zhang2023}.
\end{proof}
\begin{theorem}\label{thm:efk_converge}
Under the assumptions of Lemma \ref{appendix:EFK_uniqe} and nonlinear term \(F\) satisfies \ref{appendix:EEK_lip}, there exists a unique solution  
\( u \in H^2( \mathbf{D_2}) \cap H^4( \mathbf{D_2}) \) to the  EFK equation \ref{eq:eqmain}. Moreover, if the sequence \( \{U_n\} \) is uniformly bounded and equicontinuous, then the neural network approximation \( U_n \) converges strongly to \( u \) in \( L^2(\mathbf{D_2}) \). Furthermore, \( U_n \) uniformly converges to \( u \) in \( \mathbf{D_2} \).
\end{theorem}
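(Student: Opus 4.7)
The plan is to mirror the three-step strategy used for the Burgess equation in Theorem \ref{thm:bur_converge}, adapted to the fourth-order EFK setting. First, existence and uniqueness of $u\in H^2(\mathbf{D_2})\cap H^4(\mathbf{D_2})$ are immediate from Lemma \ref{appendix:EFK_uniqe} together with the Lipschitz hypothesis on $F$ from Lemma \ref{appendix:EEK_lip}; this is the standard Galerkin/fixed-point argument for the semilinear biharmonic parabolic equation. The uniform $L^\infty$-bound on $\{U_n\}$ supplied by Theorem \ref{thm:bound_efk}, combined with the assumed uniform boundedness and equicontinuity, will underpin both limit passages.

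For the strong $L^2$ convergence, I would introduce the error $e_n := U_n - u$, which satisfies the EFK equation up to the interior neural-network residual, and test against $e_n$ to obtain the energy identity
\begin{equation*}
\frac{1}{2}\frac{d}{dt}\|e_n\|_{L^2(\mathbf{D_2})}^2 + \gamma\|\Delta e_n\|_{L^2}^2 + \|\nabla e_n\|_{L^2}^2 \;=\; -\int_{\mathbf{D_2}}\bigl(F(U_n)-F(u)\bigr)e_n\,dx \;+\; \int_{\mathbf{D_2}}\mathfrak{R}_{\mathrm{int},\theta_n}\,e_n\,dx,
\end{equation*}
where the integration-by-parts uses the boundary data $U_n=u=0$ and $\Delta U_n=\Delta u=0$ on $\partial\mathbf{D_2}$. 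The Lipschitz estimate $|F(U_n)-F(u)|\le L|e_n|$ (valid on the uniformly bounded range guaranteed by Theorem \ref{thm:bound_efk}), Cauchy--Schwarz on the residual term, and Gronwall's inequality then yield $\sup_{t\le T}\|e_n(t)\|_{L^2}^2 \le C\bigl(\|e_n(0)\|_{L^2}^2 + \|\mathfrak{R}_{\mathrm{int},\theta_n}\|_{L^2([0,T]\times\mathbf{D_2})}^2\bigr)$, which tends to zero as the training residual and initial error vanish.

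For the uniform convergence conclusion, the assumed uniform boundedness and equicontinuity of $\{U_n\}$ on the compact set $\overline{\mathbf{D_2}}$ place us directly in the Arzel\`a--Ascoli setting: every subsequence admits a further subsequence converging uniformly to some $v\in C(\overline{\mathbf{D_2}})$. By the previous step, $v=u$ almost everywhere, hence everywhere by continuity, and a standard subsequence argument upgrades this to uniform convergence of the entire sequence $U_n\to u$ on $\mathbf{D_2}$.

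The principal obstacle, as already flagged in the treatment of the Burgess case via \cite{Zhang2023}, is to control the cubic nonlinearity $F(u)=u^3-u$ and the biharmonic term $\gamma\Delta^2 U_n$ simultaneously: $F$ is only \emph{locally} Lipschitz, so the effective Lipschitz constant in the Gronwall step must be bounded independently of $n$, which is precisely where the a priori $L^\infty$-estimate of Theorem \ref{thm:bound_efk} is indispensable; meanwhile the fourth-order term forces all energy work to take place in the $H^2$-pairing and demands that both boundary conditions ($U_n=0$ \emph{and} $\Delta U_n=0$) be respected when integrating by parts. Verifying that none of the implicit constants depend on $n$ is the delicate quantitative part of the argument.
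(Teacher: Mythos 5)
The paper does not actually supply a proof of this theorem: its ``proof'' is the single sentence ``We can prove this theorem following the approach in \cite{Zhang2023},'' so there is no detailed argument to compare yours against line by line. Your outline is a legitimate filling-in of that gap, and it in fact combines two strategies: a quantitative energy/Gronwall estimate for the strong $L^2$ convergence, and a qualitative Arzel\`a--Ascoli compactness argument for the uniform convergence. The compactness half is exactly what the stated hypotheses (uniform boundedness and equicontinuity of $\{U_n\}$) are designed for and is presumably what the cited Cahn--Hilliard-style argument does: extract a uniformly convergent subsequence, identify the limit with $u$ via uniqueness, and upgrade to the whole sequence. Your energy half buys more --- an explicit rate in terms of the interior residual --- which the paper's generalization-error theorems (Appendix \ref{thm:1}) obtain separately.

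One concrete caveat in your energy step: a neural network approximation $U_n$ does not exactly satisfy the boundary conditions, so you cannot assume $e_n=0$ and $\Delta e_n=0$ on $\partial\mathbf{D_2}$ when integrating $\int e_n\,\Delta^2 e_n\,dx$ by parts. The boundary terms $\int_{\partial\mathbf{D_2}} e_n\,\partial_\nu(\Delta e_n)\,ds$ and $\int_{\partial\mathbf{D_2}}(\partial_\nu e_n)\,\Delta e_n\,ds$ survive and must be controlled by the spatial boundary residuals $\mathfrak{R}_{sb_i}$, exactly as in the proof of Appendix \ref{thm:1} where they contribute the $C_2(\boldsymbol{\mathcal{E}}_T^{sb})^{1/2}$ term. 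Either add those residual terms to your Gronwall inequality, or restrict the energy argument to the case where the boundary conditions are imposed exactly; as written, the identity you display is not available for a generic trained network. The rest of the outline --- local Lipschitz control of $F(u)=u^3-u$ via the $L^\infty$ bound of Theorem \ref{thm:bound_efk}, and the subsequence argument for uniform convergence --- is sound.
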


\begin{proof}
We can prove this theorem following the approach in \cite{Zhang2023}.
\end{proof}
\begin{remark}
To achieve accurate learning, it is necessary to employ a sufficiently large number of training (collocation) points. The quadrature error, which is influenced by both the number of collocation points \(N\) and the associated quadrature constants, can be minimized by selecting a large enough \(N\). While an \emph{a priori} estimate for the training error is not available, it can be evaluated once training is complete. The theoretical analysis indicates that, provided the relevant constants remain finite and the RW-PINN is trained appropriately, the resulting relative error will be lower. This is consistent with established principles in machine learning, where a well-trained and suitably regularized RW-PINN \(u^*\) maintains stability and ensures a bounded generalization error. In this work, we set \(N_{\text{int}} > 128\) and \(N_{\text{sb}/\text{tb}/\boldsymbol{d}} > 64\).    
\end{remark}
\begin{remark}
If the residual weight is set to \(1\), the proposed RW-PINN framework reduces to the standard PINN formulation. For linear PDEs, the performance of the RW-PINN is almost equivalent to that of the standard PINN.
\end{remark}

\section{Numerical Experiments} \label{sec:3}
The Puprposed RW-PINN algorithms (\ref{alg1}) and (\ref{alg2}) were implemented using the PyTorch framework \cite{paszke2017automatic}. All numerical experiments were conducted on an Apple Mac-Book equipped with an M3 chip and 24 GB of RAM.
Several hyper-parameters play a crucial role in the PINN framework, including the number of hidden layers \(K - 1\), the width of each layer, the choice of activation function \( \sigma \), the weighting parameter \(\lambda\) in the loss function, the regularization coefficient \(\lambda_{\text{reg}}\) in the total loss and the optimization algorithm for gradient descent. The activation function \( \sigma \) is chosen as the hyperbolic tangent (\(\tanh\)), which ensures smoothness properties necessary for theoretical guarantees in neural networks.
To enhance convergence, the second-order LBFGS optimizer is employed. For optimizing the remaining hyper-parameters, an ensemble training strategy is used, following the methodology in \cite{bai2021physics, mishra2021physics, mishra2022estimates, mishra2023estimates, Murari2024}. This approach systematically explores different configurations for the number of hidden layers, layer width, parameter \(\lambda\), and regularization term \(\lambda_{\text{reg}}\), as summarized in Table~\ref{table_1}. Each hyper-parameter configuration is tested by training the model \(n_\theta\) times in parallel with different random weight initializations. The generalized error, relative generalized error and training loss are denoted as  \( \mathcal{E}_G \),  \( \mathcal{E}^{r}_{G} \) and \( \mathcal{E}_{T} \), respectively. The configuration that achieves the lowest training loss is selected as the optimal model. Numerical experiments have been conducted with a maximum of 5000 LBFGS iterations. 

The forward problems for both models are discussed as follows:

\subsubsection{1D nonlinear Burgess equation:}\label{Example2}
The nonlinear Burgess discussed in \cite{nayied2023numerical} is considered:
\begin{equation}\label{eq:Test2}
\frac{\partial u(t,x)}{\partial t} = \frac{1}{2} \frac{\partial^2 u(t,x)}{\partial x^2} + R(t,x),
\end{equation}
subject to the conditions:
\begin{equation}
u(0,x) = \log(x + 2), \quad u(t,0) = \log(t + 2), \quad u(t,1) = \log(t + 3),
\end{equation}
where
\begin{equation}
R(t,x) = e^{-u(t,x)} + \frac{1}{2} e^{-2u(t,x)}.
\end{equation}
The exact solution is:
\begin{equation}
u(t,x) = \log(x + t + 2).
\end{equation}
Figure \ref{fig:Case2} showcases a graphical comparison between the approximate solutions obtained using PINN and the exact solution for the model given by Eq. \eqref{eq:Test2}. The results demonstrate that the PINN-based approximation remains highly consistent with the exact solution, highlighting its stability. Moreover, Fig. \ref{fig:Case2} clearly depicts the increase in tumor cell density as time \( t \) progresses. A three-dimensional visualization comparing the PINN and exact solutions is presented in Fig. \ref{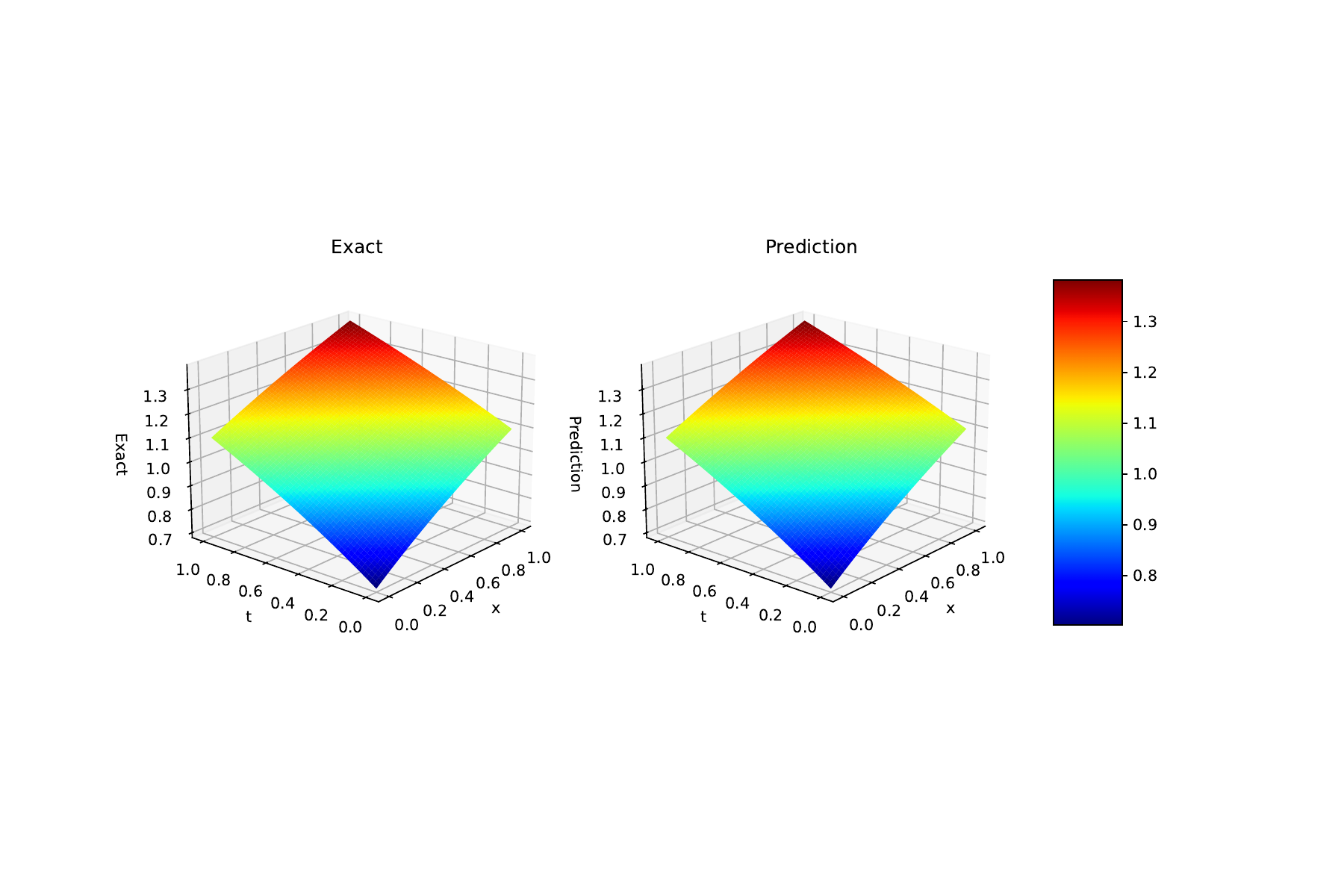}. Additionally, Table \ref{table_3} provides the error \( \mathcal{E}_G \) and training error \( \mathcal{E}_{T} \) along with the chosen hyper-parameters. A zoom view of the plot at \( t = 0.5 \) reveals that the PINN prediction aligns more closely with the exact solution compared to the Fibonacci and Haar wavelet methods \cite{nayied2023numerical}.

\begin{figure}[htbp]
\centering
\begin{subfigure}[b]{0.4\textwidth}
    \includegraphics[width=\textwidth, height=0.3\textheight]{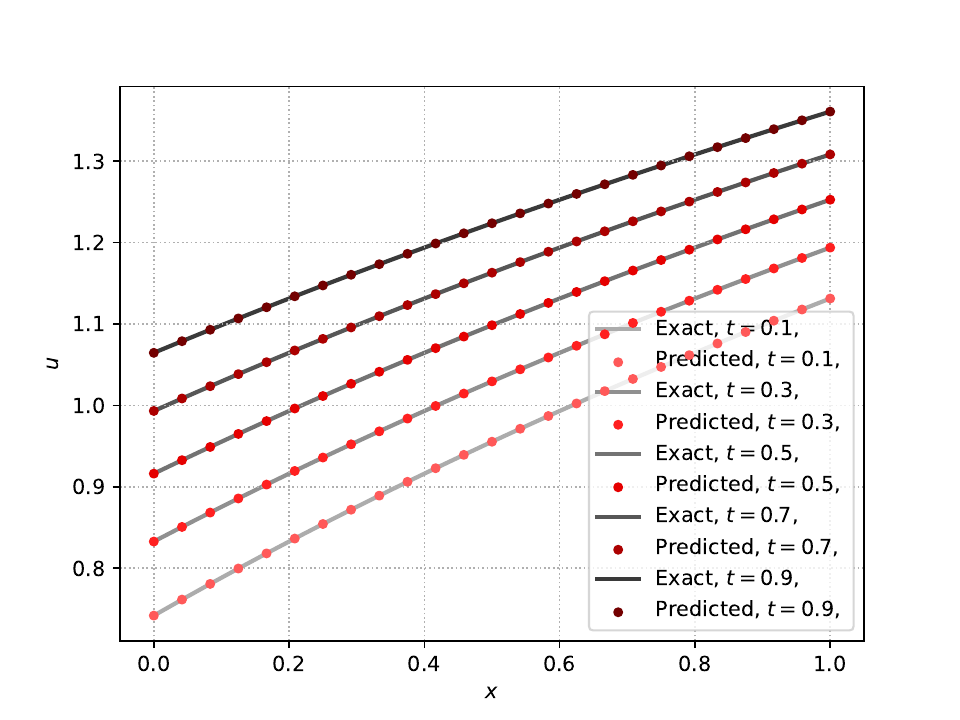}
    \caption{Exact and predicted solution at Different $t$.}
    \label{Ex2}
\end{subfigure}
\hfill
\begin{subfigure}[b]{0.4\textwidth}
    \includegraphics[width=\textwidth, height=0.3\textheight]{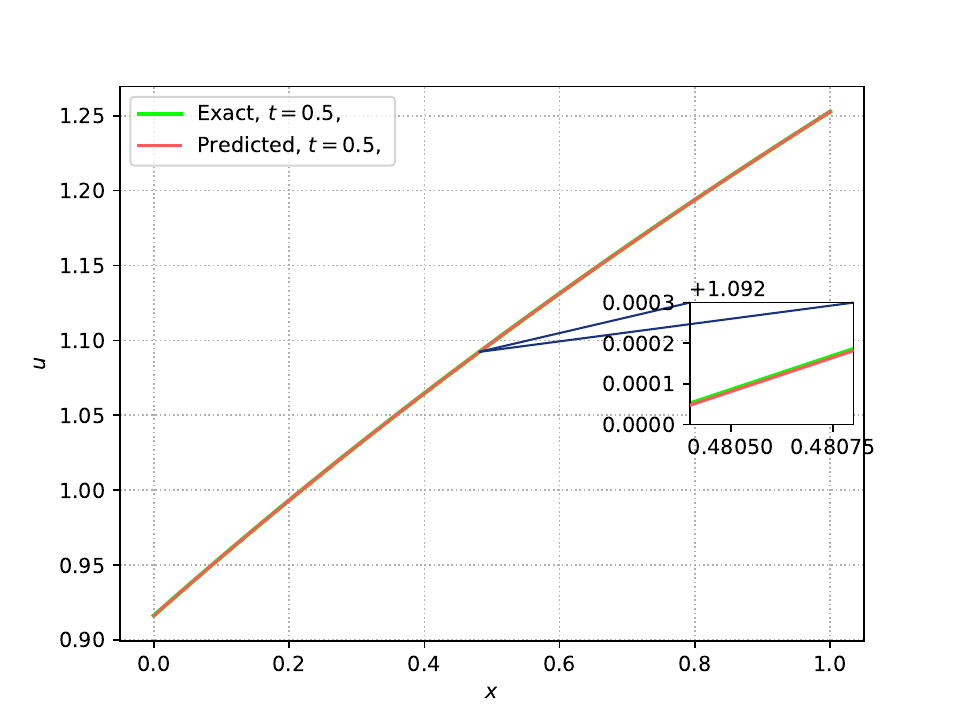}
    \caption{Exact and predicted solution at different $t=0.5$.}
    \label{Ex2a}
\end{subfigure}

\caption{Exact and predicted solution.}
\label{fig:Case2}
\end{figure}

\begin{figure}[htbp]
\centering
\includegraphics[height=0.4\textheight]{figures/Ex2b.pdf}
\caption{Comparison between exact solution and predicted solution.}
\label{figures/Ex2b.pdf}
\end{figure}
\begin{table}[!ht]
\centering
\begin{tabular}{||c c c c c c c c c||}
\hline
Methods & $N_{\text{int}}$ & $N_{\text{sb}}$ & $N_{\text{tb}}$ & $K-1$ & $\bar{d}$ & $\mathcal{E}_{T}$ &  $ \mathcal{E}_G$ & \\ [0.5ex] 
\hline
PINN & 2048 & 512 & 512 & 4 & 20 & 7.1e-05 & 2.3e-05 & \\ 
\hline
RWa PINN & 2048 & 512 & 512 & 4 & 20  & 5.4e-05 & 1.1e-05 &  \\ 
 \hline
RWb PINN  & 2048 & 512 & 512 & 4 & 20  & 3.6e-05 & 6.3e-06 &  \\ 
 \hline
\end{tabular}
\caption{PINN and RW PINN Configuration for Section \ref{Example2}.}
\label{table_3}
\end{table}
\subsubsection{1D nonlinear extended Fisher–Kolmogorov equation}\label{Example3}
The EFK model in one dimension is expressed as follows:
\begin{align}\label{Test4a}
u_{t} + \gamma u_{xxxx} - k_{2} u_{xx} + u^{3} - u &= f, \\
u(0,x) &= \sin(\pi x). \\
u(t,0) = 0, \quad u(t,1) &= 0.
\end{align}
The analytic solution to this model, as presented in \cite{abbaszadeh2020error} (though with different boundary conditions), is given by \( \exp(-t) \sin(\pi x) \). The source term is:
\[
\exp(-t) \sin(\pi x) \left( \gamma \pi^{4} + \pi^{2} - 2 + \exp(-2 t) (\sin(\pi x))^{2}\right).
\]
Figure \ref{fig:Case44} shows a graphical comparison between the approximate solutions obtained using PINN and the exact solution. The results demonstrate that the PINN-based approximation closely matches the exact solution, validating its stability. A three-dimensional visualization comparing the PINN and exact solutions is provided in Fig.\ref{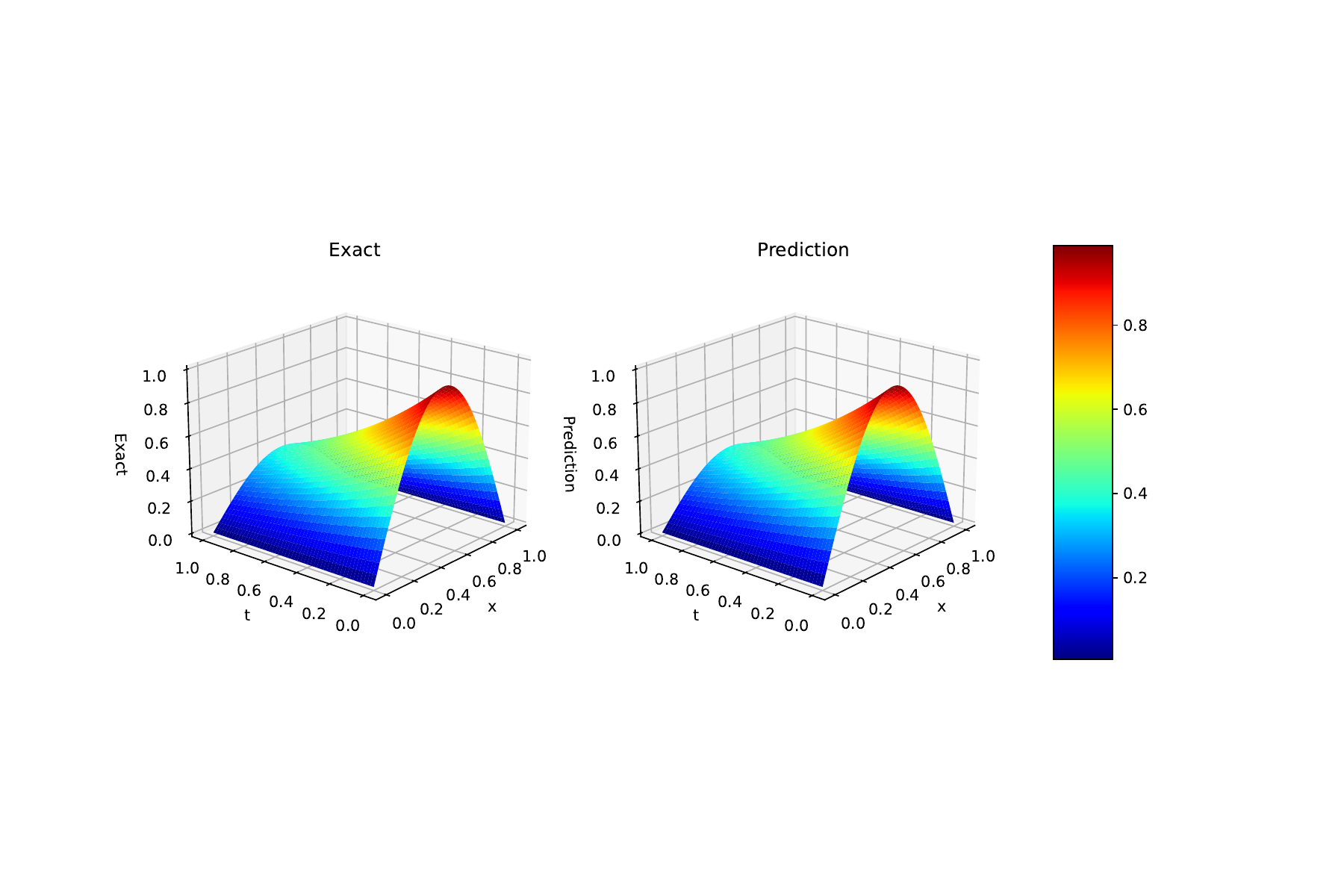}. Table \ref{table_4} presents the error \( \mathcal{E}_G \) and training error \( \mathcal{E}_{T} \), along with the selected hyper-parameters.

\begin{figure}[htbp]
\centering
\begin{subfigure}[b]{0.4\textwidth}
    \includegraphics[width=\textwidth, height=0.3\textheight]{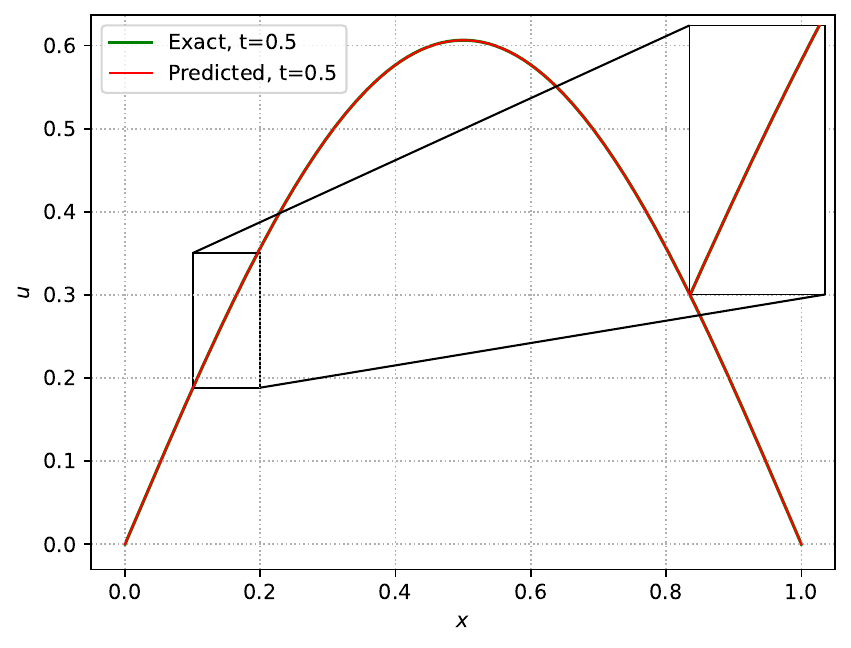}
    \caption{Exact versus predicted solution at different $t=0.5$.}
    \label{Ex3a}
\end{subfigure}
\hfill
\begin{subfigure}[b]{0.4\textwidth}
    \includegraphics[width=\textwidth, height=0.3\textheight]{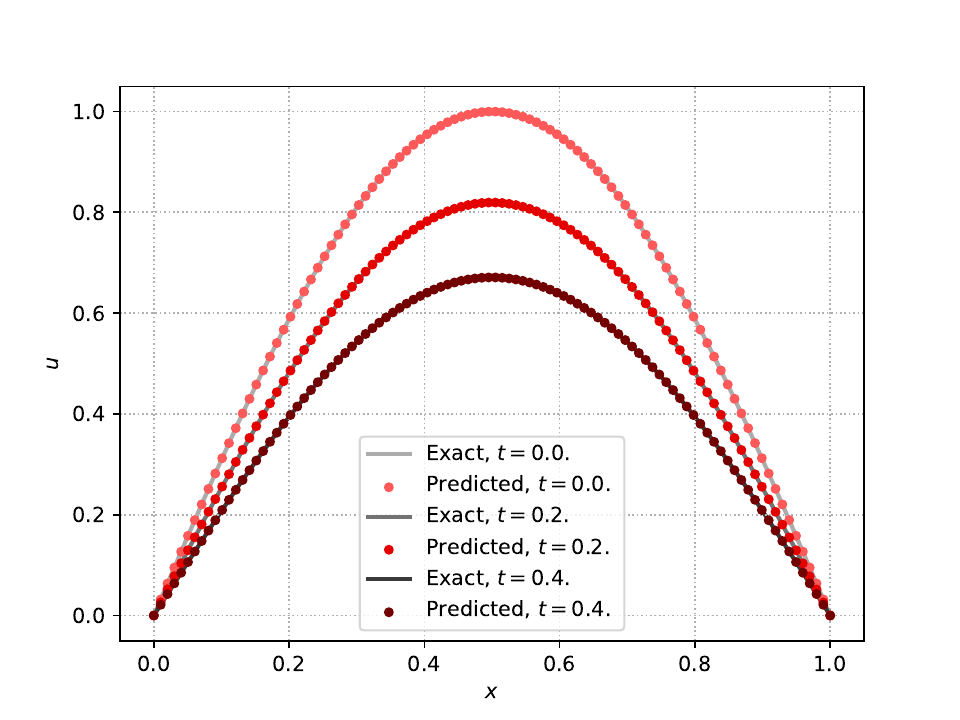}
    \caption{Exact and predicted solution at different t.}
    \label{Ex3b}
\end{subfigure}
\caption{Comparison between exact solution and predicted solution at $\gamma =0.001$.}
\label{fig:Case44}
\end{figure}
\begin{figure}[htbp]
\centering
\includegraphics[height=0.4\textheight]{figures/Ex3cc.pdf}
\caption{Comparison between exact solution and predicted solution at $\gamma =0.001$..}
\label{figures/Ex3cc.pdf}
\end{figure}

\begin{table}[!h]
\centering
\begin{tabular}{||c c c c  c c c c c||}
\hline
Methods & $N_{\text{int}}$  &$N_{\text{sb}}$ & $N_{\text{tb}}$ & $K-1$ & $\bar{d}$  & $\mathcal{E}_{T}$ & $ \mathcal{E}_G$ &\\ [0.5ex] 
\hline
PINN & 4096 & 1024 & 1024 & 4 & 20 & 0.00080 & 1.8e-04 &\\ 
\hline
RWa PINN & 4096 & 1024 & 1024 & 4 & 20 & 0.00055 & 3.5e-05 &\\ 
\hline
RWb PINN & 4096 & 1024 & 1024 & 4 & 20 & 0.00059 & 3.8e-05 &\\ 
\hline
\end{tabular}
\caption{PINN and RW PINN Configuration for Section \ref{Example3}.}
\label{table_4}
\end{table}

\subsubsection{1D extended Fisher–Kolmogorov equation}\label{Example3+1}
Consider the EFK equation:
\begin{equation}
    u_t + \gamma u_{xxxx} - u_{xx} + u^3 - u = 0, \quad (x, t) \in [0,1] \times (0,T],
\end{equation}
with initial and boundary conditions:

\begin{itemize}
    \item[(a)] \( u(x,0) = x^3 (1 - x)^3 \),
    \item[(b)] \( u(x,0) = x^2 (1 - x)^2 \),
\end{itemize}

\begin{equation}
    u(0,t) = 0, \quad u(1,t) = 0,
\end{equation}
\begin{equation}
    u_{xx}(0,t) = 0, \quad u_{xx}(1,t) = 0.
\end{equation}
The numerical solution for this equation has been computed using the parameter \( \gamma = 0.01 \) with different initial values. Figure \ref{Ex3+1a} shows the numerical solution for the initial condition \( u(x,0) = x^3 (1 - x)^3 \), while Figure~\ref{Ex3+1b} corresponds to the initial condition \( u(x,0) = x^2 (1 - x)^2 \). Both figures display the numerical solutions at different times, exhibiting the same characteristics as those presented in \cite{Priyanka2024}.  
Table~\ref{table_3+1} reports the training error \( \mathcal{E}_{T} \) alongside the chosen hyperparameters.

\begin{table}[!h]
\centering
\begin{tabular}{||c c c c c  c c c c||}
\hline
 Intial Conditions & Methods & $N_{\text{int}}$  &$N_{\text{sb}}$ & $N_{\text{tb}}$ & $K-1$ & $\bar{d}$ & $\mathcal{E}_{T}$ & \\ [0.5ex] 
\hline
(a) & PINN & 2048 & 512 & 512 & 4 & 20 & 0.0008 &\\ 
\hline
(a) & RWa PINN  & 2048 & 512 & 512 & 4 & 20 & 0.0005 &\\ 
\hline
(a) & RWb PINN  & 2048 & 512 & 512 & 4 & 20 & 0.0005 &\\ 
\hline
\hline
(b) & PINN & 4096 & 1024 & 1024 & 4 & 20 & 0.0009 & \\ 
\hline
(b) & RWa PINN  & 4096 & 1024 & 1024 & 4 & 20 & 0.0006 & \\ 
\hline
(b) & RWb PINN & 4096 & 1024 & 1024 & 4 & 20 & 0.0006 & \\ 
\hline
\end{tabular}
\caption{Best-Performing PINN Configuration for Section \ref{Example3+1}.}
\label{table_3+1}
\end{table}

\begin{figure}[htbp]
\centering
\begin{subfigure}[b]{0.4\textwidth}
    \includegraphics[width=\textwidth, height=0.3\textheight]{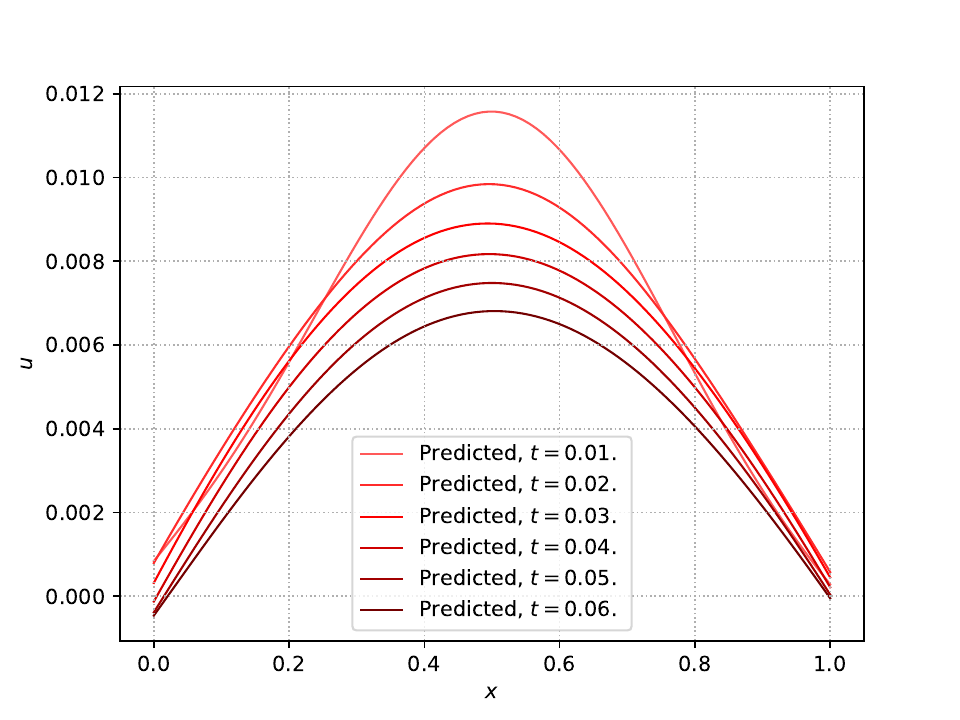}
    \caption{Prediction}
    \label{Ex3+1a}
\end{subfigure}
\hfill
\begin{subfigure}[b]{0.4\textwidth}
    \includegraphics[width=\textwidth, height=0.3\textheight]{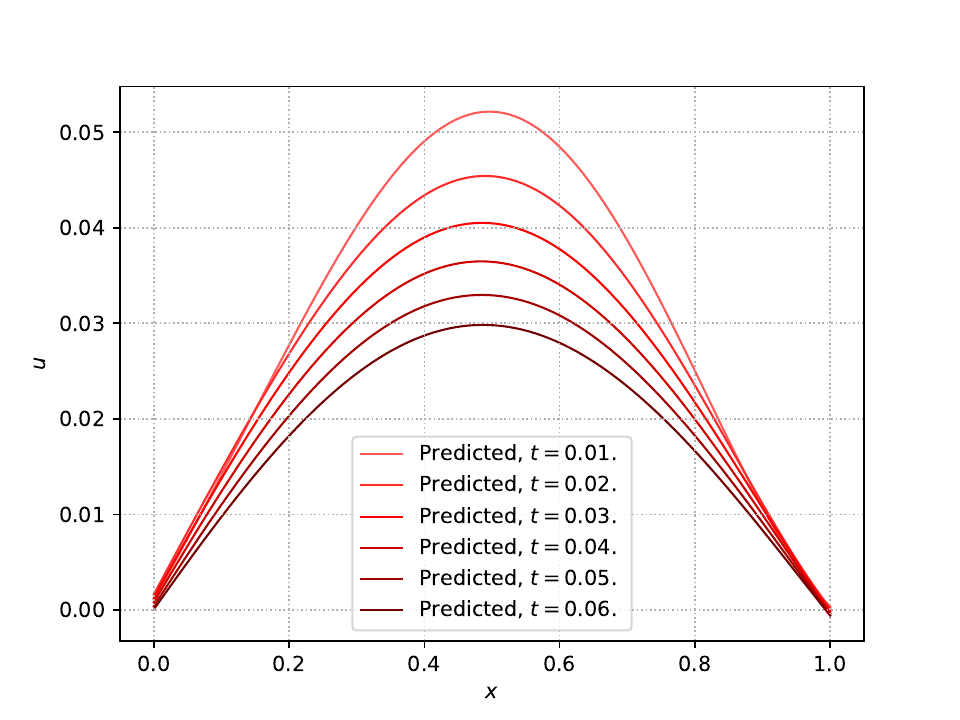}
    \caption{Prediction}
    \label{Ex3+1b}
\end{subfigure}
\caption{Predicted plot with  \( \gamma = 0.01. \)}
\label{fig:Case3+1}
\end{figure}
\subsubsection{2D extended Fisher–Kolmogorov equation}\label{Example4}
In this study, we focus on the 2D nonlinear EFK equation. 

\begin{align} \label{eq5}
u_{t} +  \gamma \Delta^{2} u - \Delta u + u^{3} - u &= g(t,x, y) \quad && \text{in} \quad  (0,T] \times \mathbf{D_2} \\
u(0,x,y) &= \sin(\pi x) \sin(\pi y) \quad && \text{in} \quad \mathbf{D_2} \\
u &= f_{1}, \quad \Delta u = f_{2} \quad && \text{on} \quad (0,T] \times \partial \mathbf{D_2}.
\end{align}

The exact solution to equation (\ref{eq5}) is \( \sin(\pi x) \sin(\pi y) \exp(-t) \). The source term $g$ and the boundary conditions $f_1$ and $f_2$ are obtained directly from the exact solution. The subsequent plots compare the exact and predicted solutions, shown in both contour and 3D surface formats. Figures \ref{fig:Case4} and \ref{fig:Case4a} provide graphical comparisons of the approximate solutions obtained using PINN based algorithms and the exact solution for \( \gamma = 0.01 \), displayed as 3D visualizations at \( t = 0 \) and \( t = 1 \). The results confirm that the PINN-based approximation aligns closely with the exact solution, demonstrating its stability. Additionally, Fig. \ref{fig:Case4a} depicts the contour plot at \( t = 1 \). Table \ref{table_5} reports the error \( \mathcal{E}_G \) and training error \( \mathcal{E}_{T} \), alongside the chosen hyper-parameters.

\begin{figure}[htbp]
\centering
\begin{subfigure}[b]{0.4\textwidth}
    \includegraphics[width=\textwidth, height=0.3\textheight]{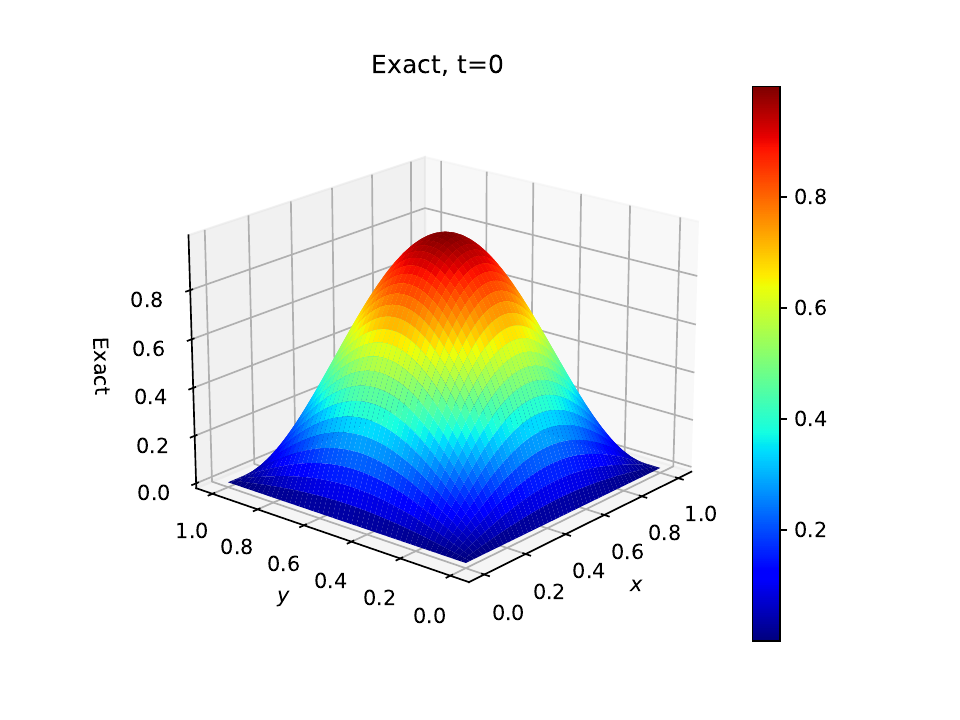}
    \caption{Exact}
    \label{Ex4a0}
\end{subfigure}
\hfill
\begin{subfigure}[b]{0.4\textwidth}
    \includegraphics[width=\textwidth, height=0.3\textheight]{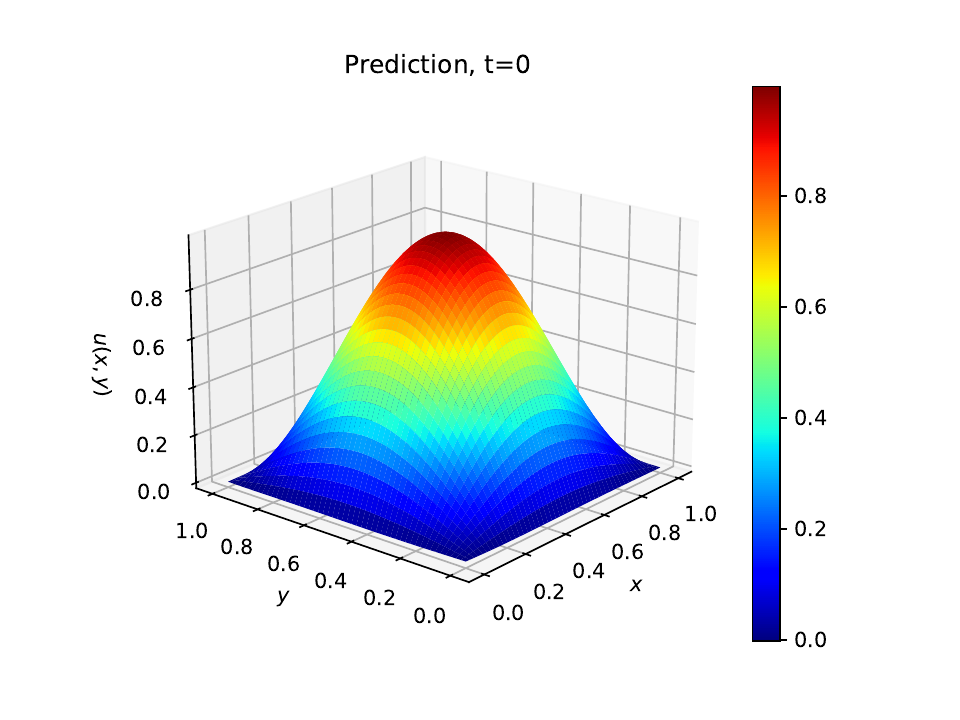}
    \caption{Predicted}
    \label{Ex4a1}
\end{subfigure}
\caption{Exact and predicted contour plot at \( T = 0 \) with \( \gamma = 0.01 \).}
\label{fig:Case4}
\end{figure}

\begin{figure}[htbp]
\centering
\begin{subfigure}[b]{0.4\textwidth}
    \includegraphics[width=\textwidth, height=0.3\textheight]{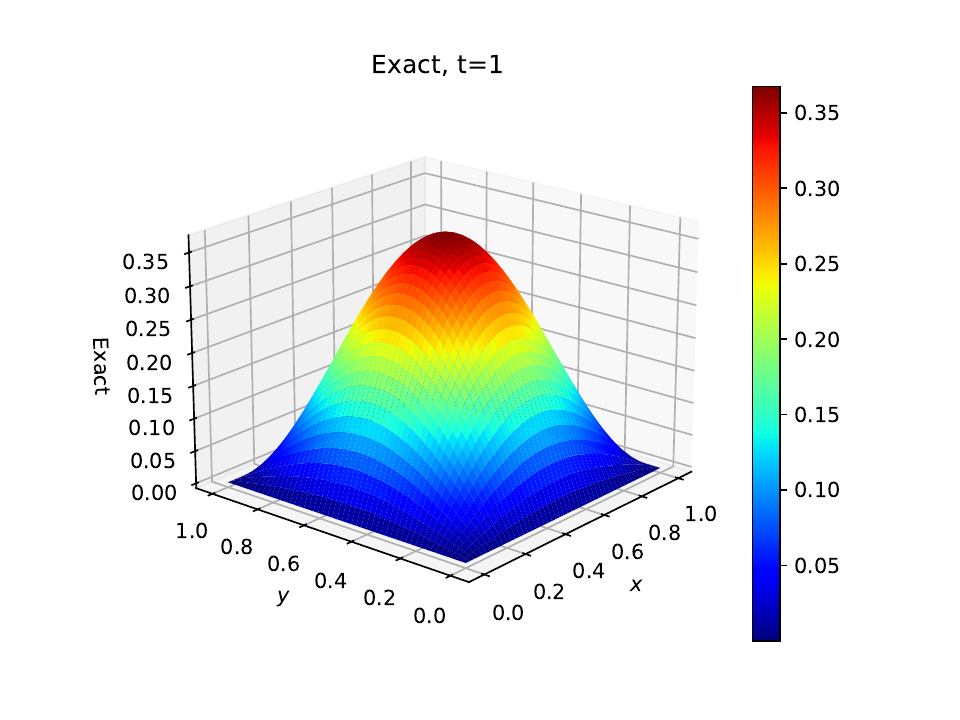}
    \caption{Exact}
    \label{Ex4bex}
\end{subfigure}
\hfill
\begin{subfigure}[b]{0.4\textwidth}
    \includegraphics[width=\textwidth, height=0.3\textheight]{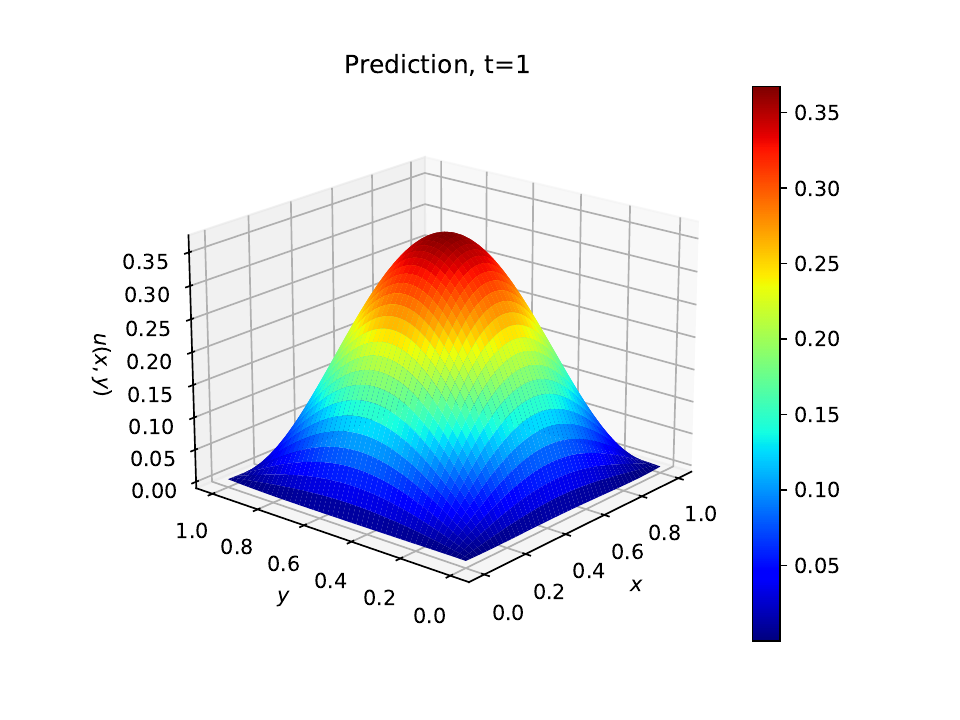}
    \caption{Predicted}
    \label{Ex4bpre}
\end{subfigure}
\caption{Exact and predicted 3D plot at \( T = 1 \) with \( \gamma = 0.01 \).}
\label{fig:Casea}
\end{figure}

\begin{figure}[htbp]
\centering
\begin{subfigure}[b]{0.4\textwidth}
    \includegraphics[width=\textwidth, height=0.3\textheight]{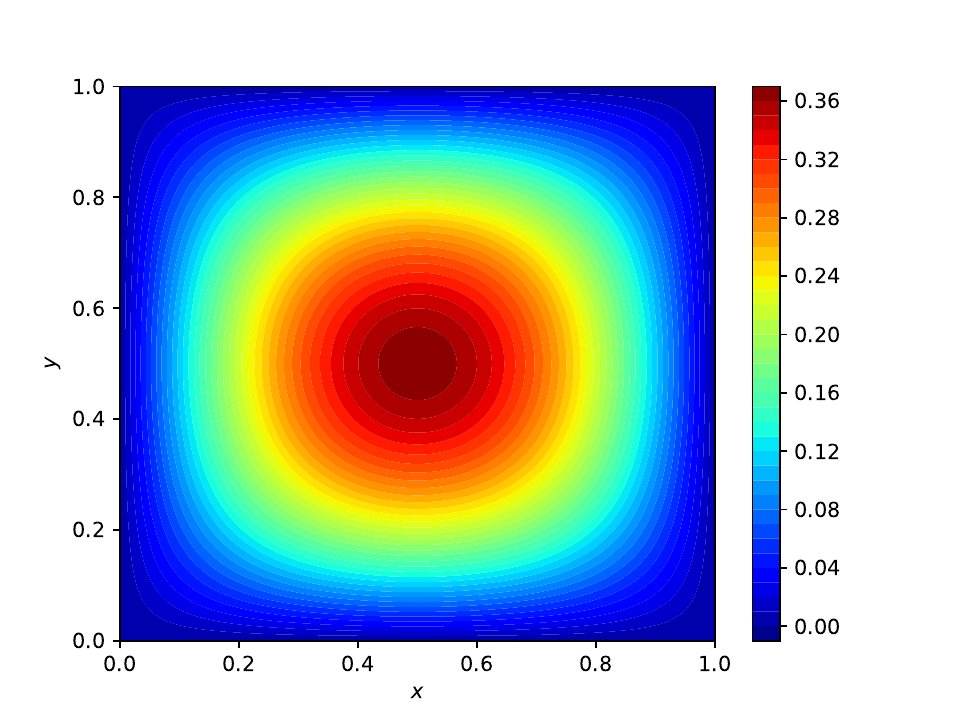}
    \caption{Exact}
    \label{Ex4bex2}
\end{subfigure}
\hfill
\begin{subfigure}[b]{0.4\textwidth}
    \includegraphics[width=\textwidth, height=0.3\textheight]{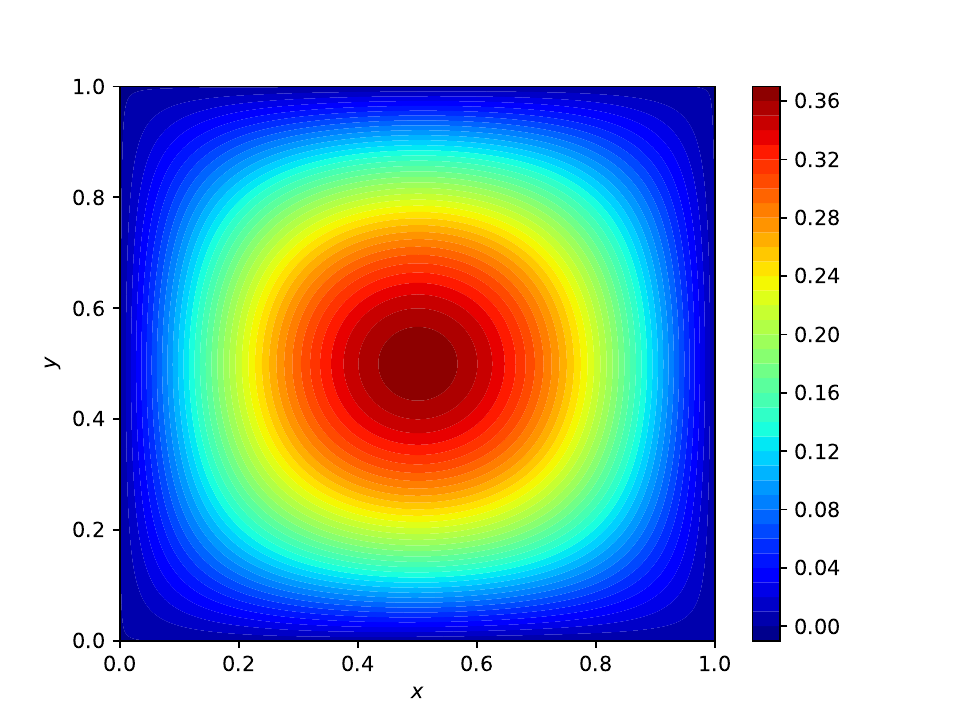}
    \caption{Predicted}
    \label{Ex4bbpre}
\end{subfigure}
\caption{Exact and predicted contour plot at \( T = 1 \) with \( \gamma = 0.01 \).}
\label{fig:Case4a}
\end{figure}

\begin{table}[!ht]
\centering
\begin{tabular}{|| c c c  c c c c c c||}
\hline
 Method & $N_{\text{int}}$ & $N_{\text{tb}}$ & $N_{\text{sb}}$ & $K-1$ & $\bar{d}$ & $\mathcal{E}_{T}$ & $ \mathcal{E}_G$  & \\ [0.5ex] 
\hline
PINN & 8192 & 2048 & 2048 & 4 & 28 & 0.0024 & 1.1e-03  & \\ 
 \hline
RWa PINN & 8192 & 2048 & 2048 & 4 & 28 & 0.0011 & 5.3e-04  & \\ 
 \hline
RWb PINN & 8192 & 2048 & 2048 & 4 & 28 & 0.0013 & 4.6e-04  &  \\ 
\hline
\end{tabular}
\caption{PINN and RW PINN Configuration for Section \ref{Example4}.}
\label{table_5}
\end{table}

\subsection{Inverse Problems}
The inverse problems for both models are discussed as follows:
\subsubsection{1D nonlinear Burgess equation}\label{Example5}
The nonlinear Burgess equation considered in \cite{nayied2023numerical} is given as follows:
\begin{equation}\label{eq:Test5}
\frac{\partial u}{\partial t} = \frac{1}{2} \frac{\partial^2 u}{\partial x^2} + R(t,x),
\end{equation}
where
\begin{equation}
R(t,x) = e^{-u(t,x)} + \frac{1}{2} e^{-2u(t,x)}.
\end{equation}
The exact solution is:
\begin{equation}
u(t,x) = \log(x + t + 2).
\end{equation}

The results confirm that the PINN-based approximation closely matches the exact solution, demonstrating its stability. 

The plot resembles Example~\ref{Example2}.
Additionally, Table \ref{table_5} reports the error \( \mathcal{E}_G \) and training error \( \mathcal{E}_{T} \) along with the selected hyper-parameters.

\begin{table}[!ht]
\centering
\begin{tabular}{||c c  c c c c||}
\hline
Method & $N$ & $K-1$ & $\bar{d}$ & $\mathcal{E}_{T}$ & $ \mathcal{E}_G$   \\  [0.5ex] 
\hline
PINN & 3072 & 4 & 20 & 4.7e-05 & 1.9e-05 \\ 
\hline
RWa PINN & 3072 & 4 & 20 &  5.3e-05 & 1.7e-05  \\
\hline
RWb PINN & 3072 & 4 & 20 & 3.7e-05 & 9.8e-06  \\
\hline
\end{tabular}
\caption{PINN and RW PINNs Configuration for Section \ref{Example5}.}
\label{table_5}
\end{table}

\subsubsection{1D extended Fisher–Kolmogorov equation}\label{Example6}
The 1D case of the EFK model \cite{abbaszadeh2020error} is given as follows:
\begin{align}\label{Test6}
u_{t} + \gamma u_{xxxx} - u_{xx} + u^{3} -u  = f, \\
\end{align}
The exact solution is :  
\begin{equation}\label{eq:Test6a_sol}
u(t,x) = \exp{(-t)}\sin(\pi x).
\end{equation}
The corresponding source term is:  
\[
f(t,x) = \exp{(-t)} \sin(\pi x) \left( \gamma  \pi^{4} + \pi^{2} - 2 + \exp(-2t) (\sin(\pi x))^{2} \right).
\]
Figure \ref{fig:Case6} presents a graphical comparison of the approximate solutions obtained using PINN based algorithms and the exact solution for the model represented by Eq.\eqref{eq:Test6a_sol}. The results indicate that the PINN-based approximation aligns closely with the exact solution, validating its stability. Fig.~\ref{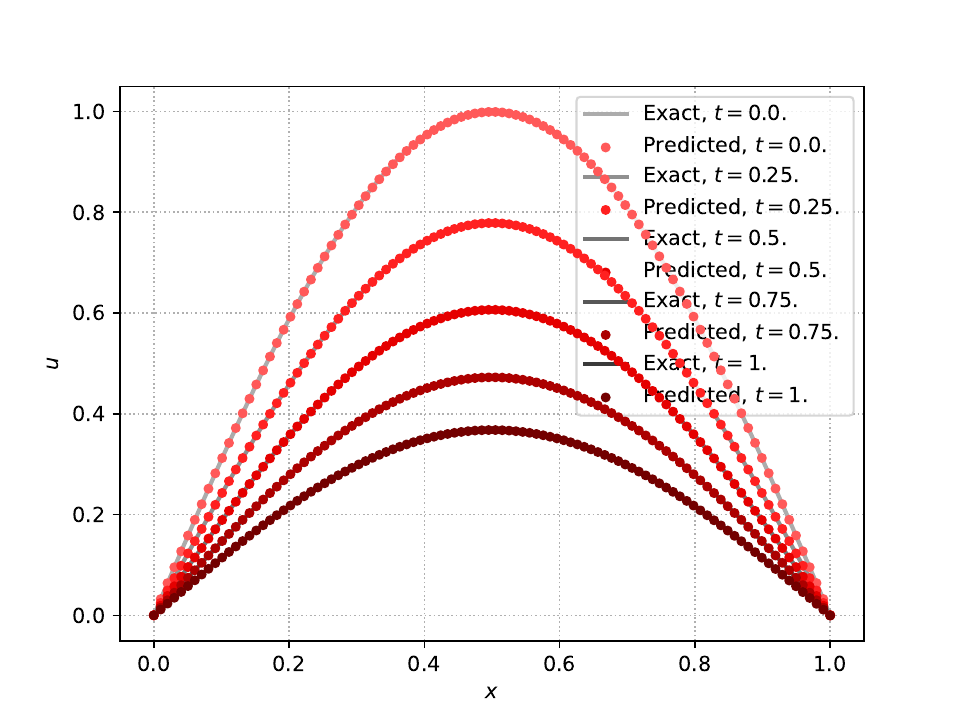} further depicts the variation of the EFK equation over time \( t \). A three-dimensional visualization comparing the PINN and exact solutions is provided in Fig. \ref{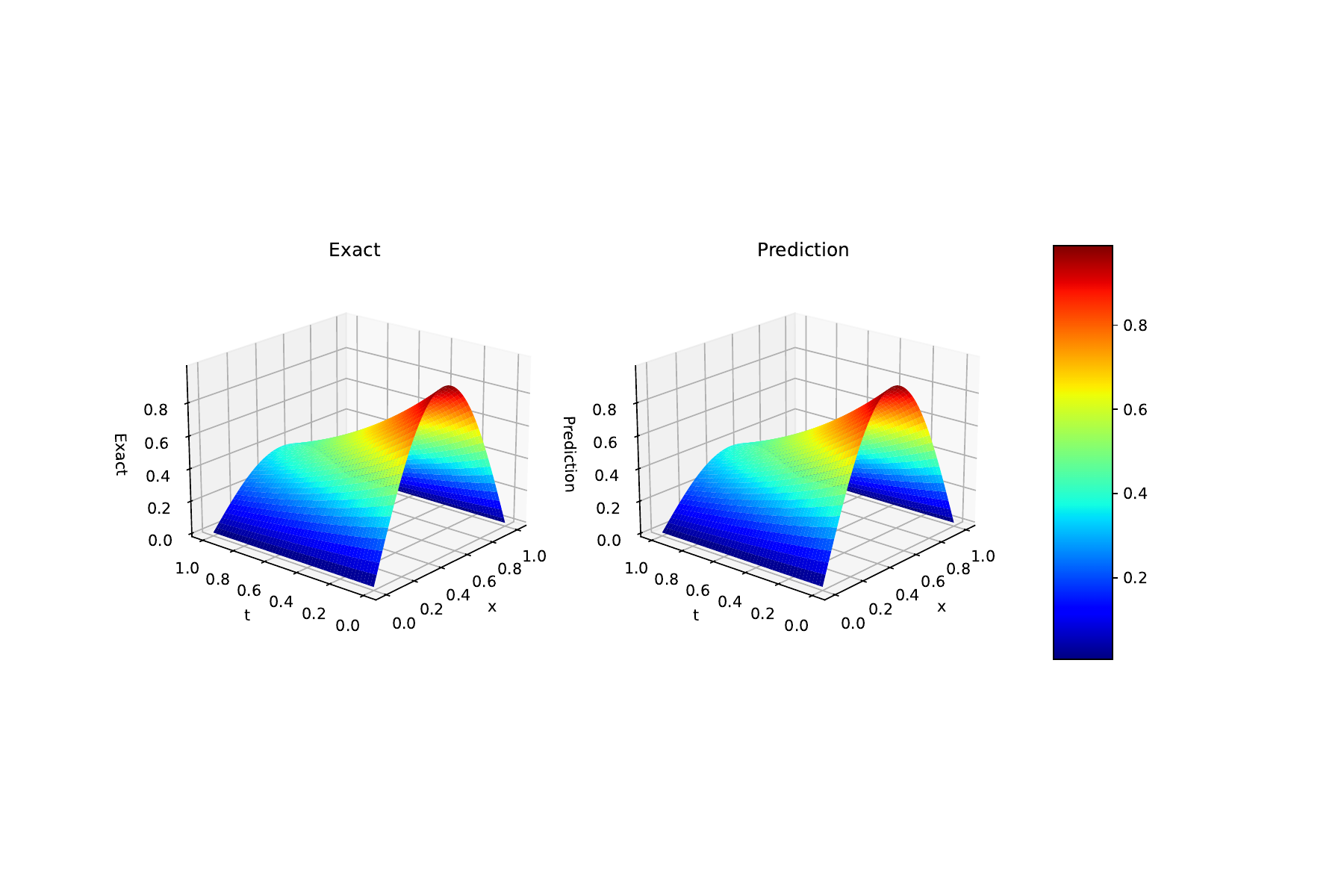}. Furthermore, Table \ref{table_6} presents the  error \( \mathcal{E}_G \) and training error \( \mathcal{E}_{T} \) alongside the chosen hyper-parameters.

\begin{figure}[htbp]
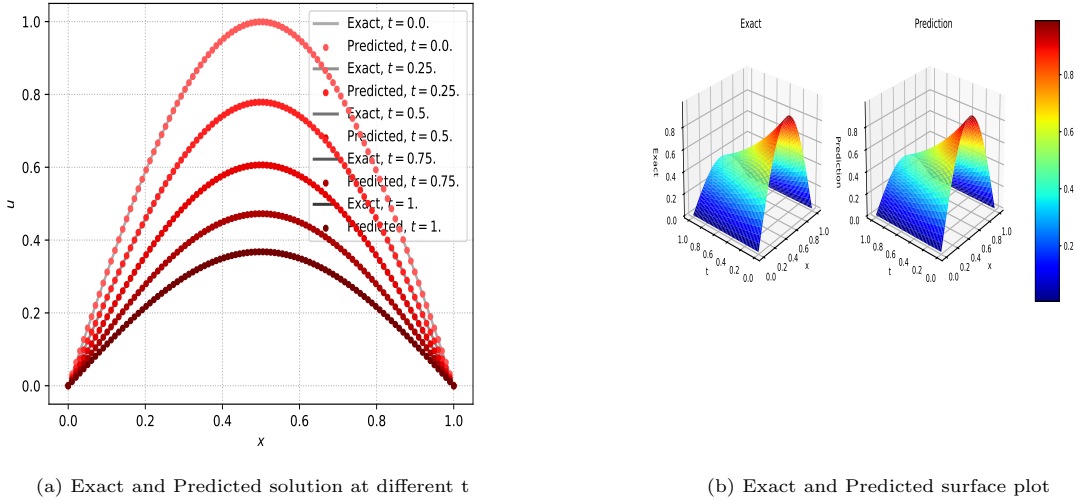

\centering
\begin{subfigure}[b]{0.45\textwidth}
    \includegraphics[width=\textwidth, height=0.3\textheight]{figures/Ex6a.pdf}
    \caption{Exact and Predicted solution at different t}
    \label{figures/Ex6a.pdf}
\end{subfigure}
\hspace{0.05\textwidth}  
\begin{subfigure}[b]{0.45\textwidth}
    \includegraphics[width=\textwidth, height=0.35\textheight]{figures/Ex6aa.pdf}
    \caption{Exact and Predicted surface plot}
    \label{figures/Ex6aa.pdf}
\end{subfigure}
\caption{Exact and predicted solution of model at $\gamma =0.001$.}
\label{fig:Case6}
\end{figure}

\begin{table}[!ht]
\centering
\begin{tabular}{||c  c c  c c c c||}
\hline
 Method & $N $ & $K-1$ & $\bar{d}$ & $\mathcal{E}_{T}$ & $ \mathcal{E}_G$  & \\ [0.5ex] 
\hline
PINN & 6144 & 4 & 20  & 0.0008 & 1.0e-04  & \\ 
\hline
RWa PINN & 6144 & 4 & 20  & 0.0005 & 2.7e-05  & \\ 
\hline
RWb PINN & 6144 & 4 & 20 & 0.0005 & 7.4e-05  & \\ 
\hline
\end{tabular}
\caption{PINN and RW PINNs Configuration for Section \ref{Example6}.}
\label{table_6}
\end{table}

\subsubsection{2D extended Fisher–Kolmogorov equation:}\label{Example7}
The 2D equation  has following exact solution \cite{al2024finite }\cite{ilati2018direct} :
\[ \exp({-t}) \exp\left(-\frac{(x-0.5)^{2}}{\beta} - \frac{(y-0.5)^{2}}{\beta}  \right). \]
The source term is derived from the exact solution. The model is solved for different parameter values over time. Both the exact and predicted solutions are presented in contour and 3D surface formats, as shown in the following sub-figures.
 Figures \ref{fig:Case7a}, \ref{fig:Case7b}, and \ref{fig:Case7c} present a graphical comparison of the approximate solutions obtained using PINN and the exact solution for varying values of \( \beta \), displayed as contour plots and 3D visualizations. The results show that the PINN-based approximation closely matches the exact solution, confirming its stability. Additionally, Fig. \ref{figures/Ex6a.pdf} illustrates the changes in tumor cell density over time \( t \). Table \ref{table_8} reports the error \( \mathcal{E}_G \) and training error \( \mathcal{E}_{T} \) along with the selected hyper-parameters.

\begin{figure}[htbp]
\centering
\begin{subfigure}[b]{0.4\textwidth}
    \includegraphics[width=\textwidth]{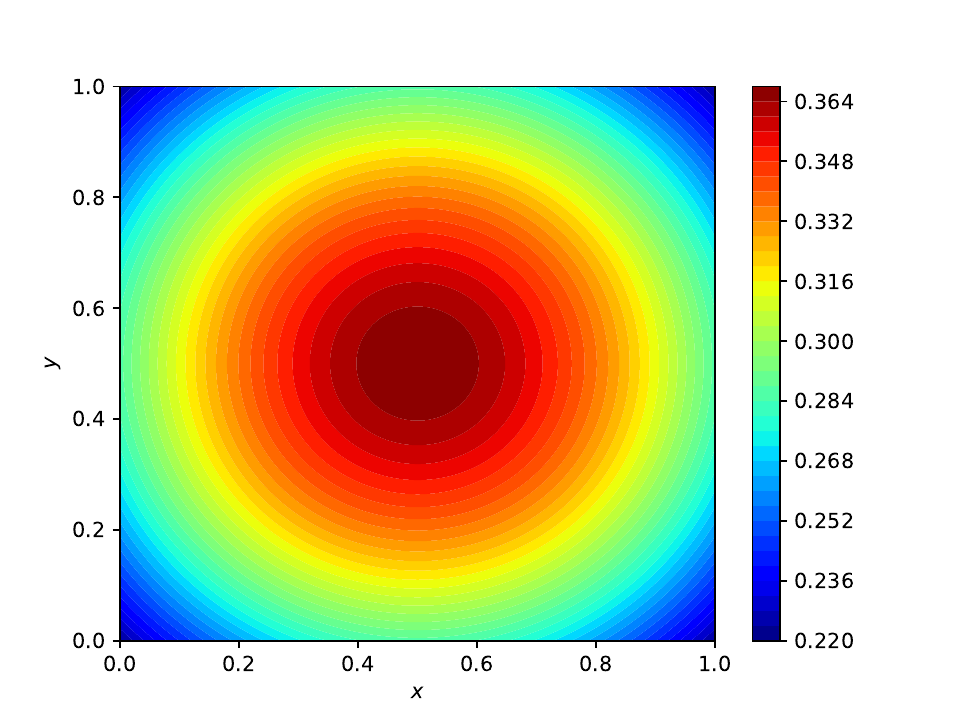}
    \caption{Exact }
    \label{Ex7a_exact1}
\end{subfigure}
\hfill
\begin{subfigure}[b]{0.4\textwidth}
    \includegraphics[width=\textwidth]{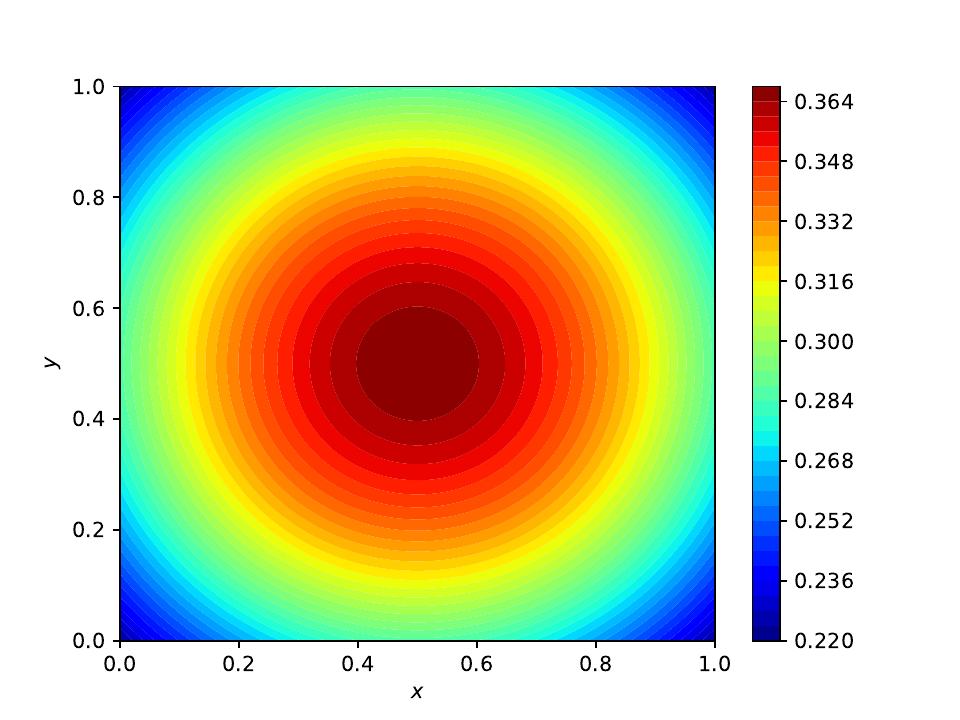}
    \caption{Predicted}
    \label{Ex7a_predicted1}
\end{subfigure}

\begin{subfigure}[b]{0.4\textwidth}
    \includegraphics[width=\textwidth]{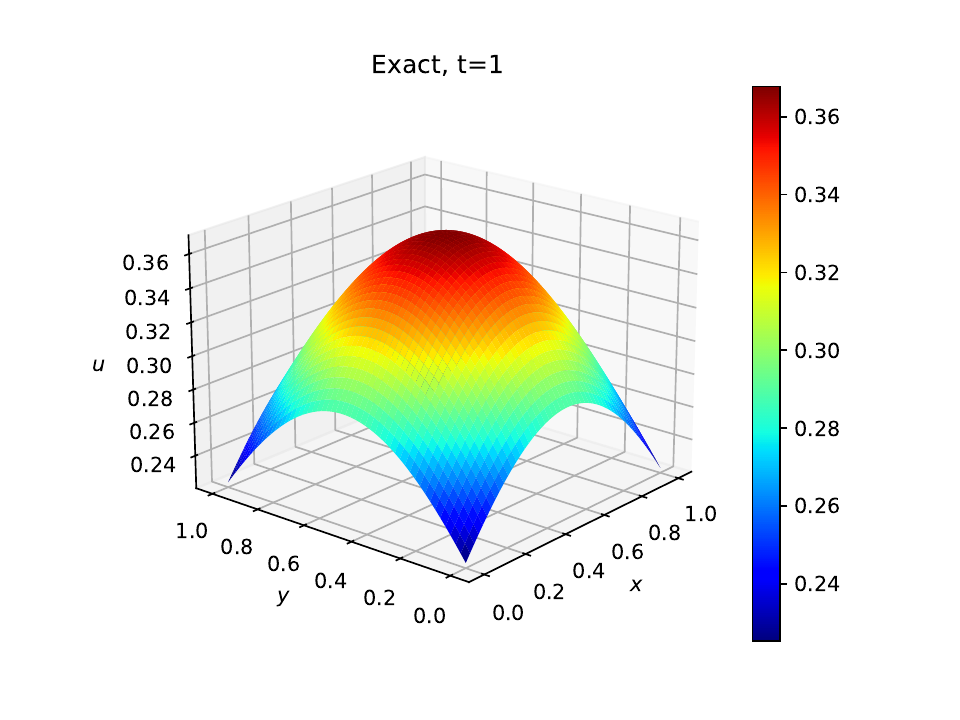}
    \caption{Exact }
    \label{Ex7aa_exact1}
\end{subfigure}
\hfill
\begin{subfigure}[b]{0.4\textwidth}
    \includegraphics[width=\textwidth]{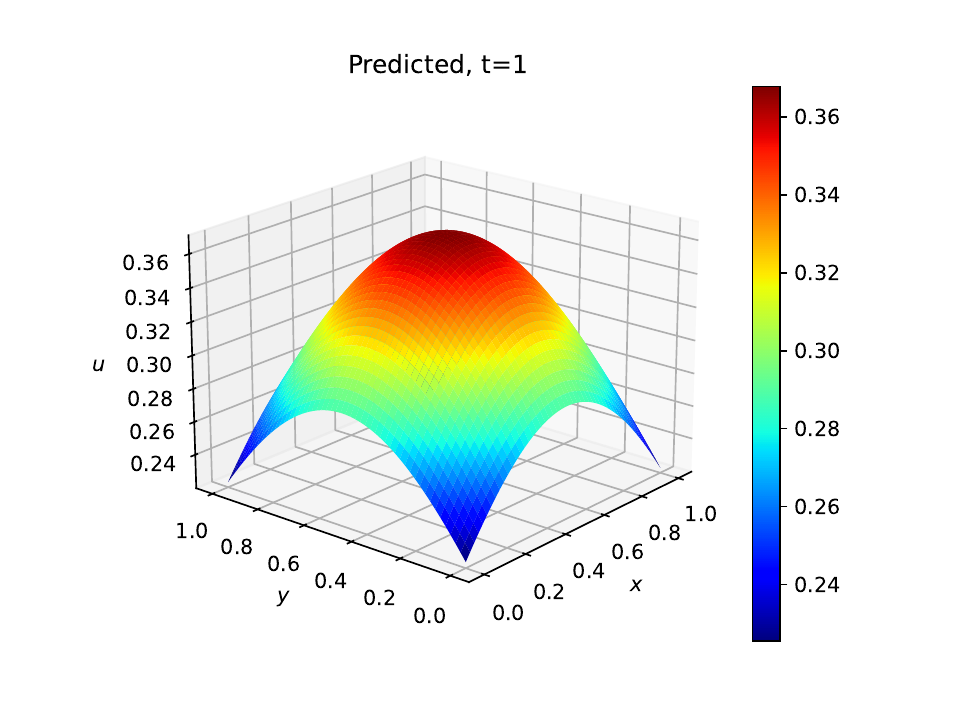}
    \caption{Predicted }
    \label{Ex7aa_predicted1}
\end{subfigure}

\caption{The exact and predicted solutions at \( T = 1 \), with  \( \gamma = 0.0001 \) and \( \beta = 1 \).}
\label{fig:Case7a}
\end{figure}

\begin{figure}[htbp]
\centering
\begin{subfigure}[b]{0.4\textwidth}
    \includegraphics[width=\textwidth]{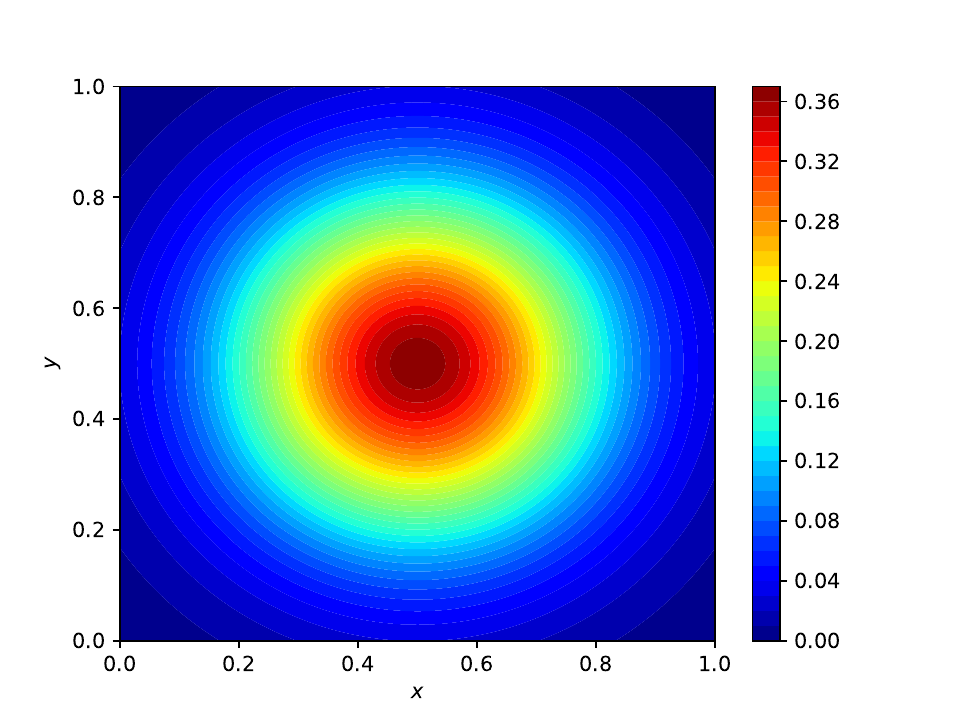}
    \caption{Exact }
    \label{Ex7b_exact1}
\end{subfigure}
\hfill
\begin{subfigure}[b]{0.4\textwidth}
    \includegraphics[width=\textwidth]{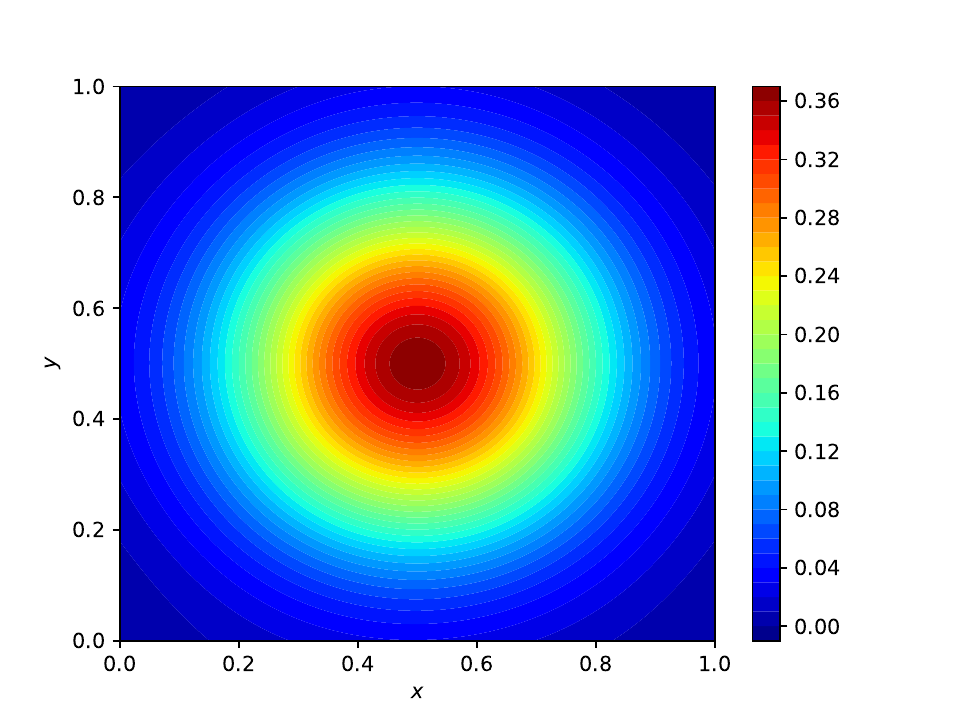}
    \caption{Predicted }
    \label{Ex7bpre}
\end{subfigure}

\begin{subfigure}[b]{0.4\textwidth}
    \includegraphics[width=\textwidth]{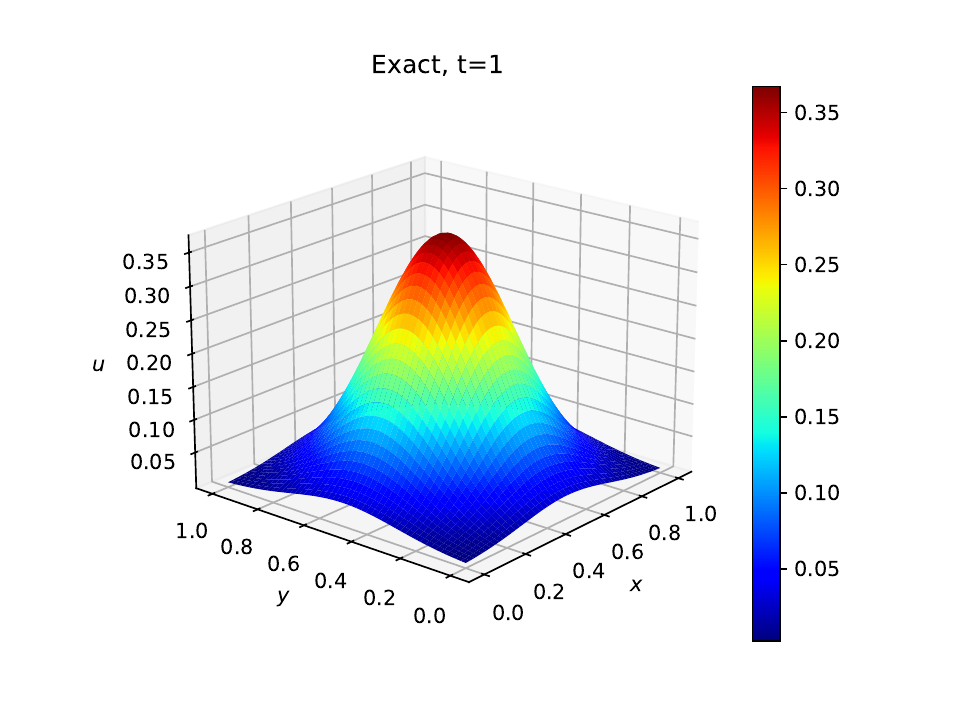}
    \caption{Exact}
    \label{Ex7bb_exact1}
\end{subfigure}
\hfill
\begin{subfigure}[b]{0.4\textwidth}
    \includegraphics[width=\textwidth]{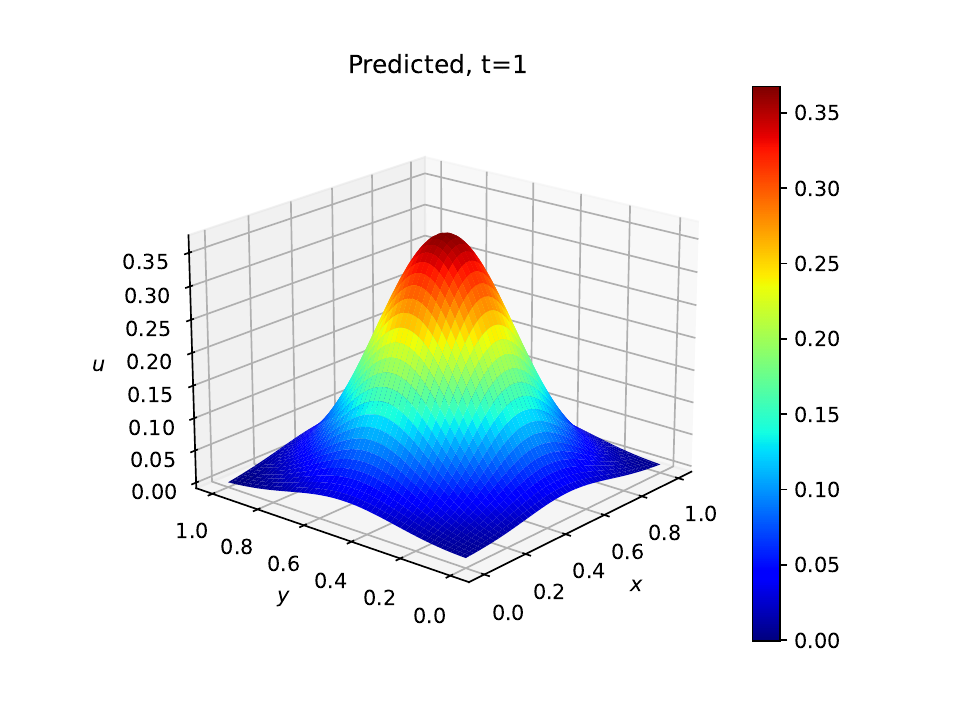}
    \caption{Predicted }
    \label{Ex7bbpre}
\end{subfigure}

\caption{The exact and predicted solutions at \( T = 1 \), with  \( \gamma = 0.0001 \) and \( \beta = 0.1 \).}
\label{fig:Case7b}
\end{figure}

\begin{figure}[htbp]
\centering
\begin{subfigure}[b]{0.4\textwidth}
    \includegraphics[width=\textwidth]{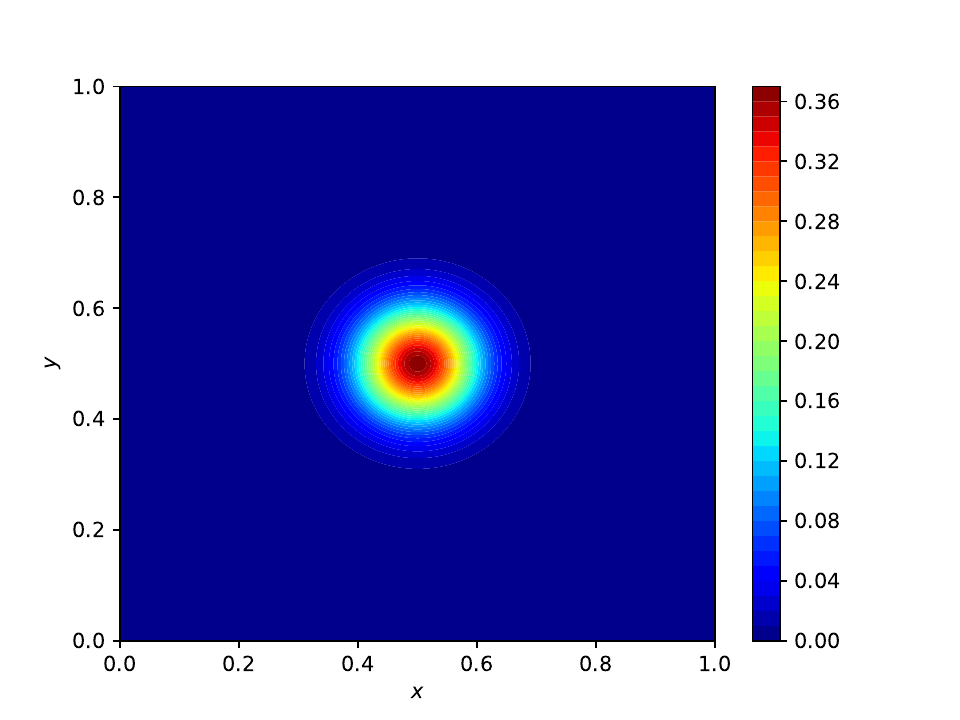}
    \caption{Exact}
    \label{Ex7c_exact1}
\end{subfigure}
\hfill
\begin{subfigure}[b]{0.4\textwidth}
    \includegraphics[width=\textwidth]{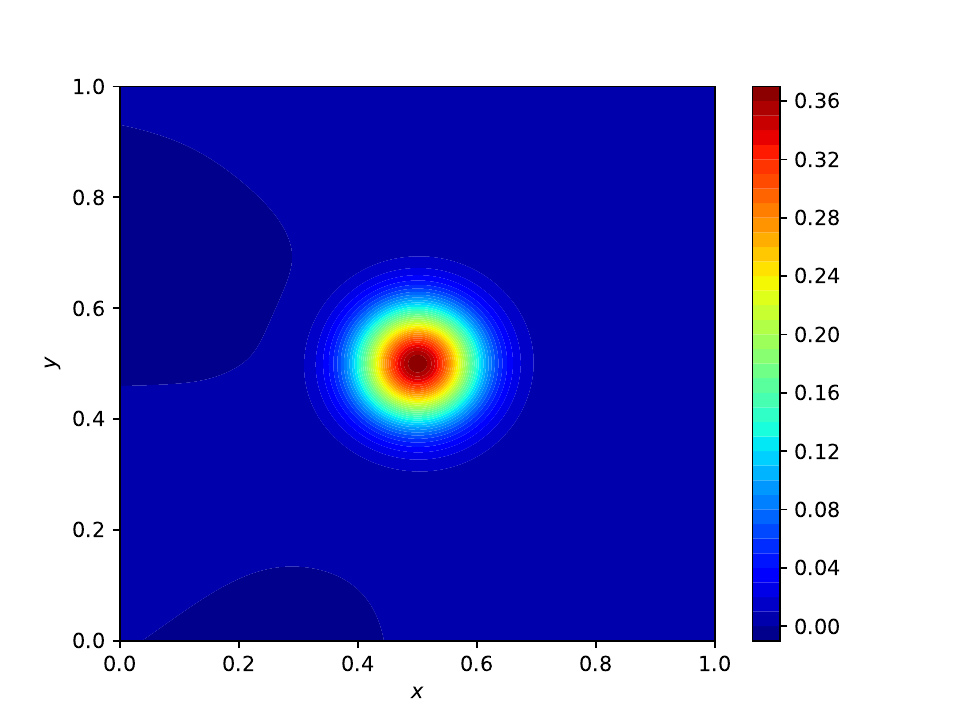}
    \caption{Predicted}
    \label{Ex7cpre}
\end{subfigure}

\begin{subfigure}[b]{0.4\textwidth}
    \includegraphics[width=\textwidth]{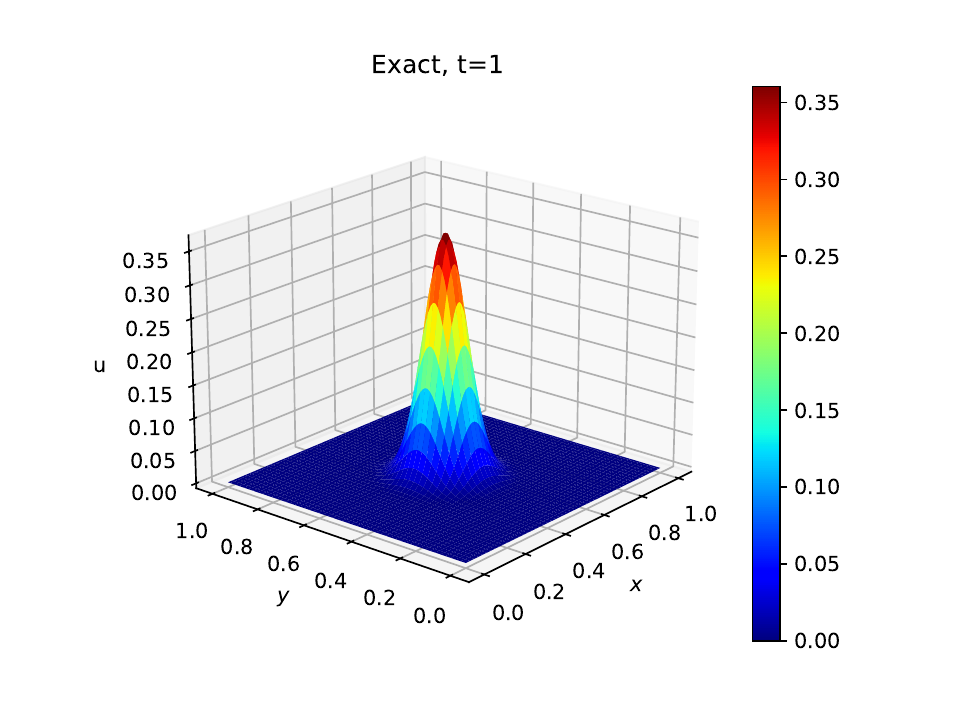}
    \caption{Exact}
    \label{Ex7cc_exact1}
\end{subfigure}
\hfill
\begin{subfigure}[b]{0.4\textwidth}
    \includegraphics[width=\textwidth]{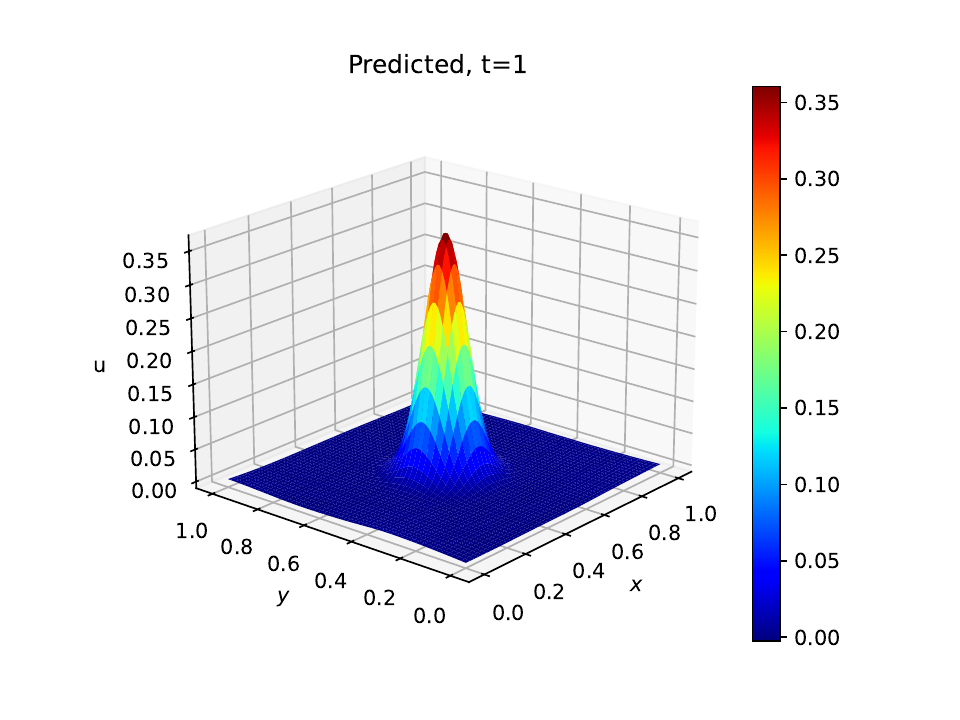}
    \caption{Predicted}
    \label{Ex7cc_predicted1}
\end{subfigure}

\caption{The exact and predicted solutions at \( T = 1 \), with \( \gamma = 0.0001 \) and \( \beta = 0.01. \)
}
\label{fig:Case7c}
\end{figure}

\begin{table}[!ht]
\centering
\begin{tabular}{||c  c c c c  c c c||}
\hline
Methods & $\beta$ & $N$ & $K-1$ & $\bar{d}$ & $\mathcal{E}_{T}$ & $ \mathcal{E}_G$ & \\ [0.5ex] 
\hline
PINN & 1 & 12288 & 4 & 20 &  0.00046 & 2.6e-04 &\\ 
\hline
RWa PINN & 1 & 12288 & 4 & 20  & 0.00022 & 1.0e-04 &\\ 
\hline
RWb PINN & 1 &  12288 & 4 & 20  & 0.00023 & 9.4e-05 & \\ 
\hline
\end{tabular}
\caption{PINN and RW PINN Configuration for Section \ref{Example7}.}
\label{table_8}
\end{table}
\section{Discussions}\label{sec:4} 
This section presents the statistical analysis of the first and last experiments, conducted using RStudio software. Figures \ref{fig:ex2a_errors_analysis} and \ref{fig:ex7a_errors_analysis} provide an overall comparison.  Figure \ref{fig:Ex_2a_Training_error} presents the training error trends for neuron counts of 12, 16, 20, and 24, evaluated over 500, 1000, and 5000 L-BFGS iterations for the methods examined. Additionally, Figure \ref{fig:Ex2a_bar_plot} presents a bar plot comparing the error \( \mathcal{E}_G \) and relative  errors  \( \mathcal{E}^{r}_G \) for the best-performing configuration, corresponding to the maximum LBFGS iterations (5000). These visualizations provide insights into training behavior, error convergence, and the impact of hyperparameter selection. Similarly, Figures \ref{fig:Ex_7a_Training_error} depict the variation in training error for the last experiment, considering neuron counts of 20, 24, 28 and 32 over the same LBFGS iterations. Figure \ref{fig:Ex7a_bar_plot} presents a bar plot comparing the error \( \mathcal{E}_G \) and relative  errors  \( \mathcal{E}^{r}_G \) for the best-performing configuration at 5000 LBFGS iterations. These visualizations facilitate the analysis of training behavior, error convergence, and the influence of hyperparameter selection across experiments. The analysis highlights how the number of neurons affects different error types. Increasing the LBFGS iterations leads to error reduction.

\begin{figure}[htbp]
\centering
\begin{subfigure}[b]{0.95\textwidth}
    \includegraphics[width=\textwidth]{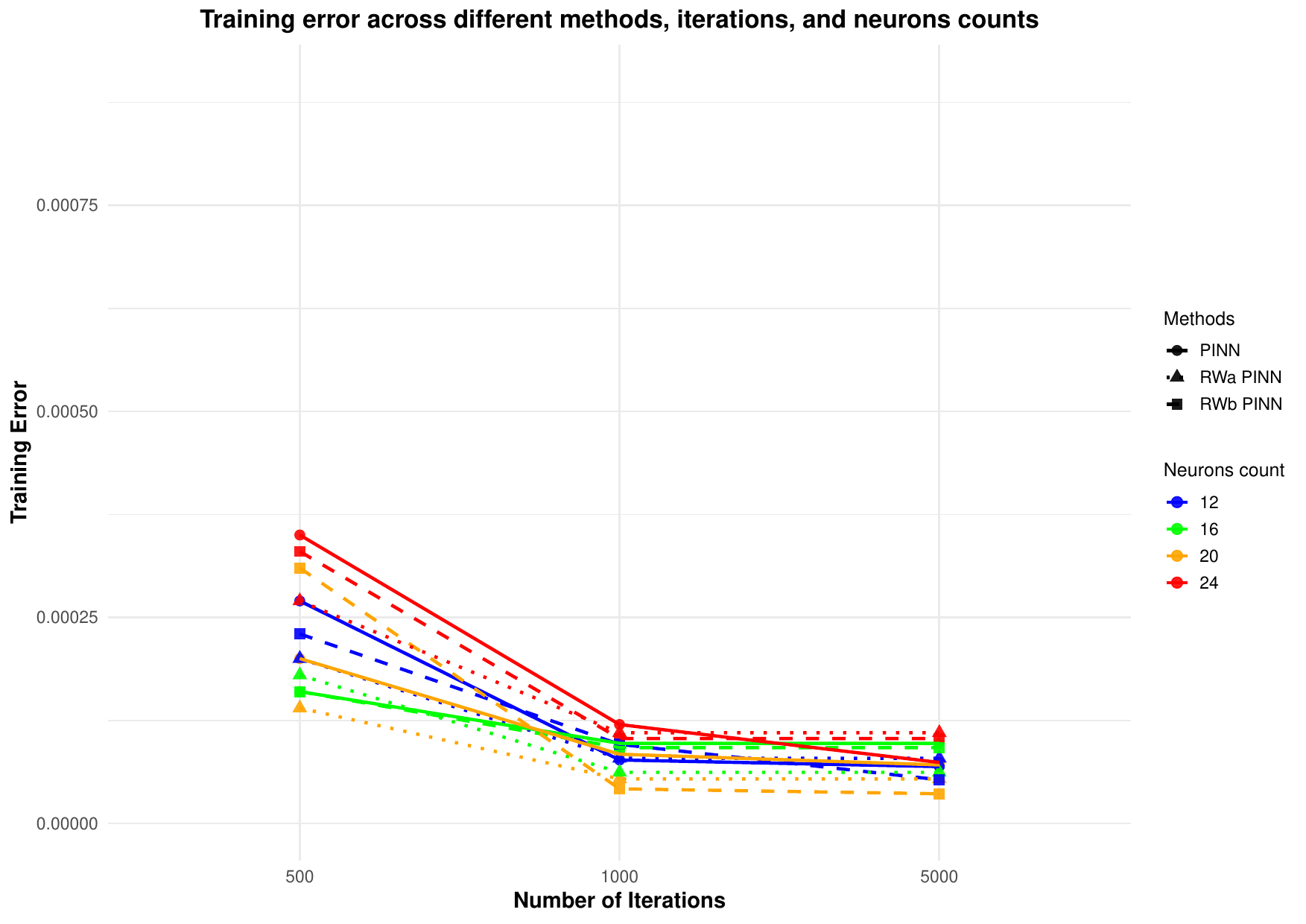}
    \caption{Training error of \ref{Example2} with varying neuron counts and different L-BFGS iterations.}
    \label{fig:Ex_2a_Training_error}
\end{subfigure}

\begin{subfigure}[b]{0.9\textwidth}
    \includegraphics[width=\textwidth]{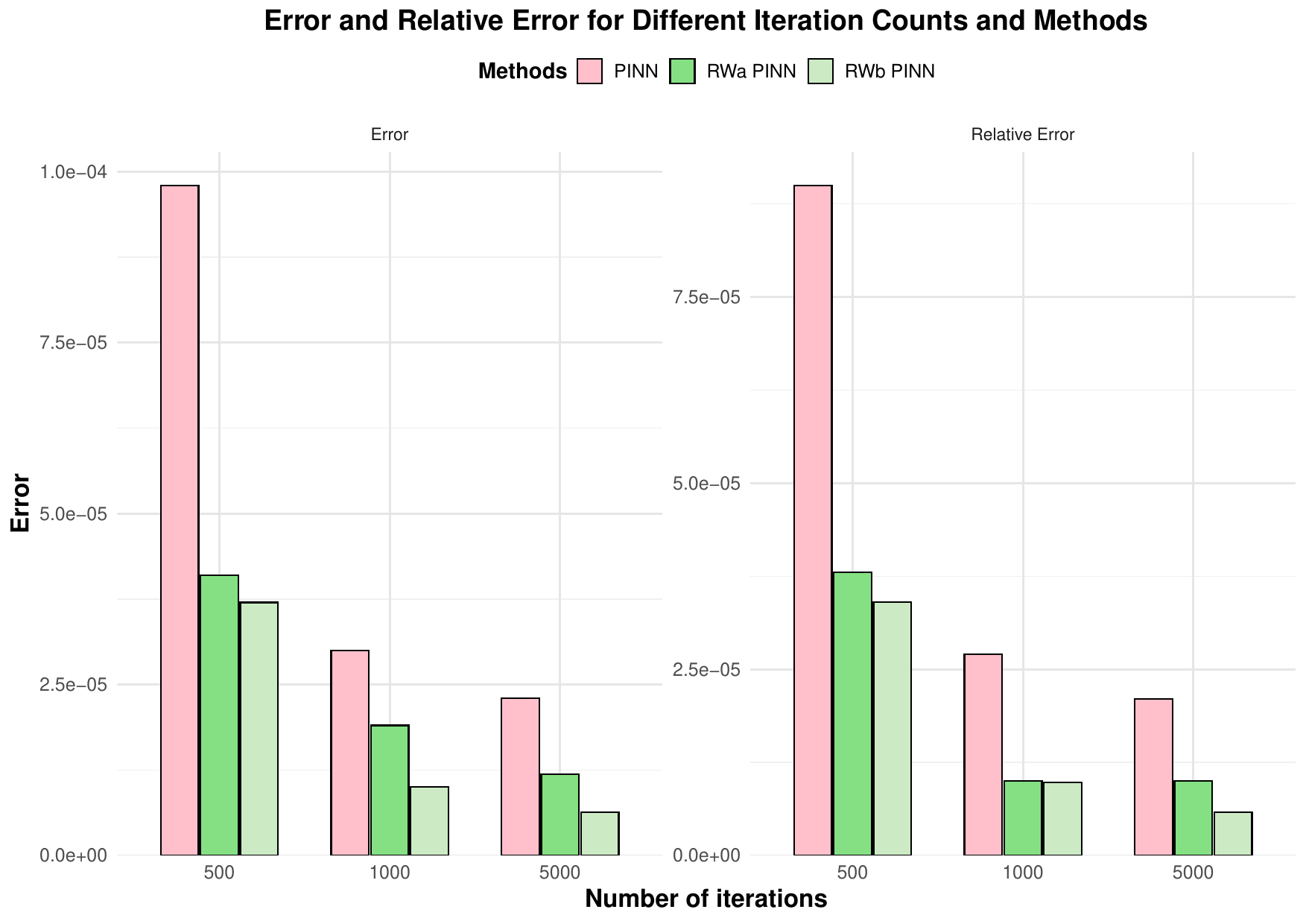}
    \caption{Bar plot of error \( \mathcal{E}_G \) and relative error \( \mathcal{E}^{r}_G \)  for numerical experiment \ref{Example2} with different L-BFGS iterations under the different hyperparameter configuration.}
    \label{fig:Ex2a_bar_plot}
\end{subfigure}
\caption{Statistical measure of errors for numerical experiment \ref{Example2}.
}
\label{fig:ex2a_errors_analysis}
\end{figure}
\begin{figure}[htbp]
\centering
\begin{subfigure}[b]{0.9\textwidth}
    \includegraphics[width=\textwidth]{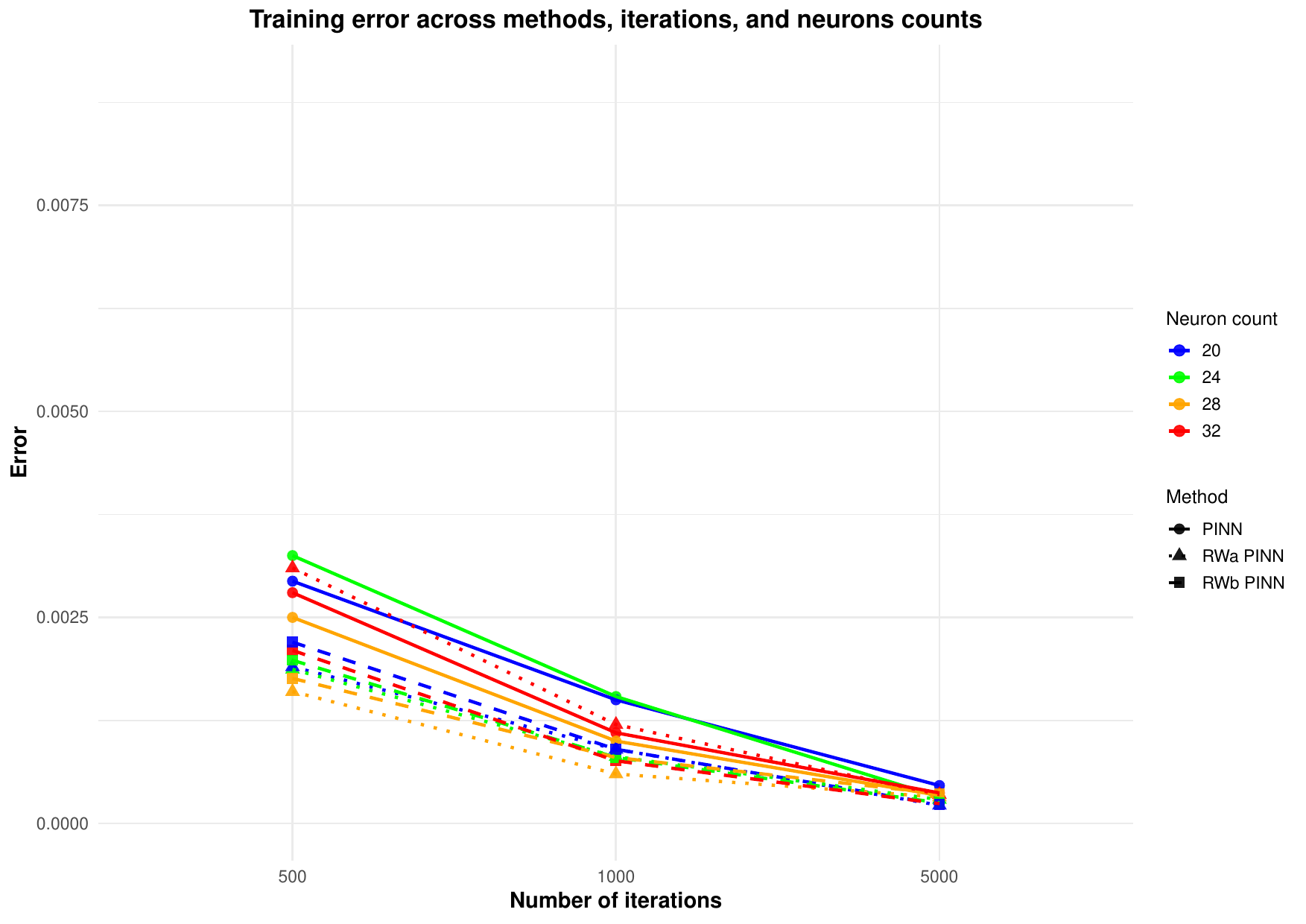}
    \caption{Training error of numerical experiment \ref{Example7} for \(\beta=1\) with different neuron counts and varying L-BFGS iterations}
    
    \label{fig:Ex_7a_Training_error}
\end{subfigure}

\begin{subfigure}[b]{0.9\textwidth}
    \includegraphics[width=\textwidth]{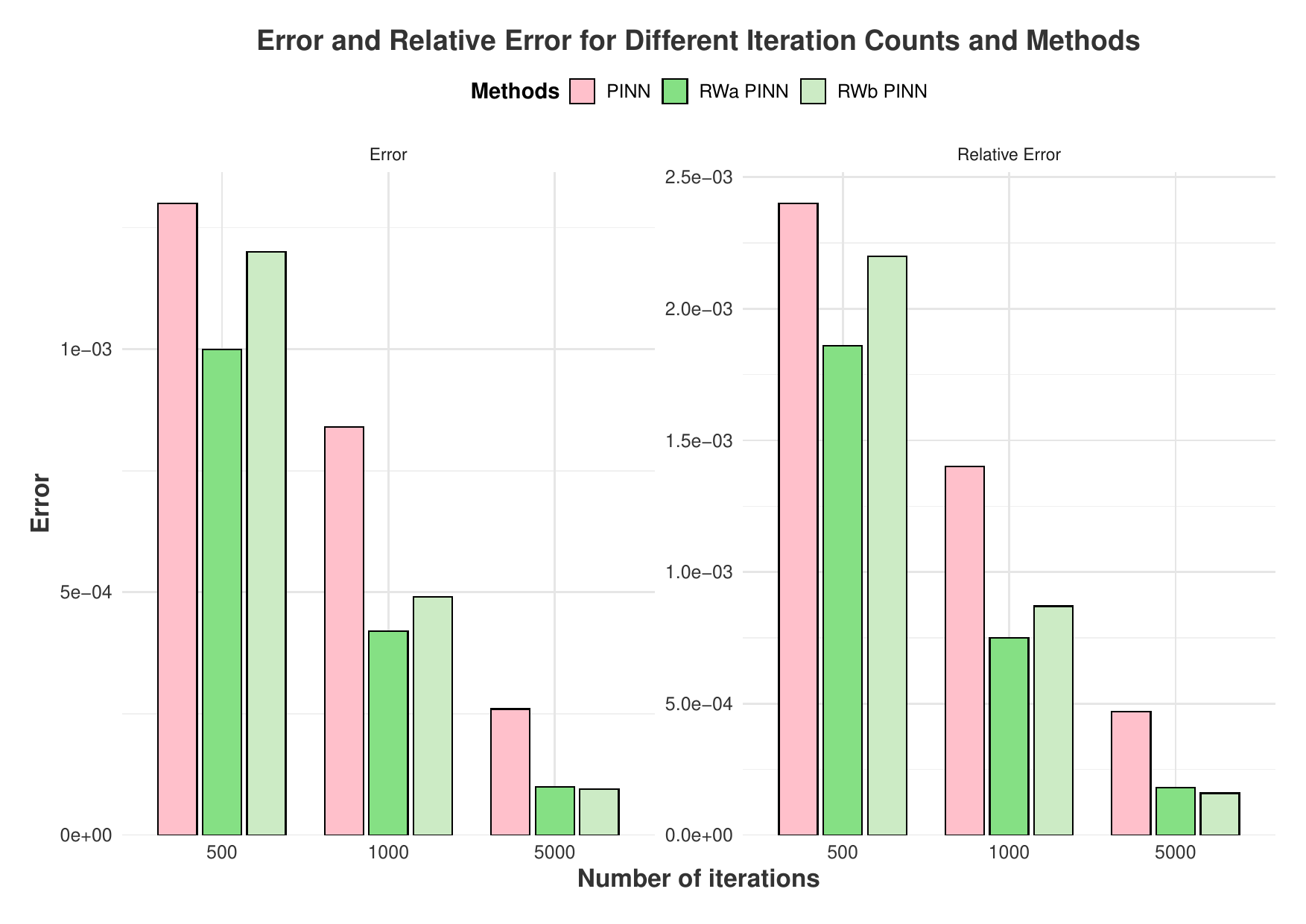}
    \caption{Bar plot of errors \( \mathcal{E}_G \)  and relative errors \( \mathcal{E}^{r}_G \)  for numerical experiment \ref{Example7} with \( \beta = 1 \), using different L-BFGS iterations under the different hyperparameter configuration.
}
    \label{fig:Ex7a_bar_plot}
\end{subfigure}
\caption{Statistical measure of errors for numerical experiment \ref{Example7} with \( \beta = 1 \).}
\label{fig:ex7a_errors_analysis}
\end{figure}
\section{Conclusion}\label{sec:5}
In numerous simulations, the proposed method demonstrates improved accuracy in nonlinear cases. The approach is also applicable to other residual-based methods, where it can further enhance accuracy compared to existing techniques. Consequently, the proposed framework can be regarded as one of the more effective residual-based methodologies. The approach involves training a neural network to approximate classical solutions by minimizing the residuals of the governing PDE. Both well-posed problems (with initial and boundary conditions) and ill-posed problems (without complete initial and boundary data) are considered, using a gradient-based optimization method. For consistency across models, the numerical experiments employ the same training data size, network architecture (number of hidden neurons per layer), and optimization scheme. Theoretical error bounds for the PINN approximation are derived, and both forward and inverse numerical experiments are conducted to demonstrate the effectiveness of RW-PINNs and PINNs in solving nonlinear PDEs efficiently.
 The RW-PINN approach is demonstrated on the nonlinear reaction–diffusion equations to obtain accurate numerical solutions. A comparative analysis with existing PINN-based methods demonstrates that the proposed RW-PINN algorithms effectively address both forward and inverse problems for the nonlinear reaction–diffusion equations. Rigorous error estimates for PINNs are derived, and extensive numerical experiments are conducted to evaluate their performance in solving nonlinear equations. Additionally, we establish the convergence and stability of the neural network.

\section*{Appendix}

The generalization error estimate for the given equation is derived for the forward problem under the assumption that $\mathrm{W_{RWa}} = \mathrm{W_{RWb}} = 1$.

\begin{appendixthm}\label{thm:0}
\label{thm:bergess}
Let \( u \in C^2( \mathbf{D_1} \times [0,T]) \) be the unique classical solution of the Burgess equation \eqref{burgis2}, where the source term \( R \) satisfies Lipchitz condition  (\ref{appendix:buress_lip}). Consider \( u^{\ast} = u_{\theta^{\ast}} \), a PINN approximation obtained through Algorithm \ref{alg1}, corresponding to the loss functions \eqref{eqn:La} and \eqref{eqn:Lb}. 
Then, the generalization error \eqref{eq:gen} satisfies the following bound:
\begin{equation}
    \label{eq:hegenb}
    \boldsymbol{\mathcal{E}}_{G} \leq C_1 \left(\boldsymbol{\mathcal{E}}_T^{tb}+\boldsymbol{\mathcal{E}}_T^{int}+C_2(\boldsymbol{\mathcal{E}}_T^{sb})^{\frac{1}{2}} + (C_{quad}^{tb})^{\frac{1}{2}}N_{tb}^{-\frac{\alpha_{tb}}{2}} +  (C_{quad}^{int})^{\frac{1}{2}}N_{int}^{-\frac{\alpha_{int}}{2}} + C_2  (C_{quad}^{sb})^{\frac{1}{4}}N_{sb}^{-\frac{\alpha_{sb}}{4}} \right),
\end{equation}
where the constants are given by:
\begin{equation}
    \label{eq:hct}
    \begin{aligned}
        C_1 &= \sqrt{T + (1+2C_R)T^2e^{(1+2C_R)T}}, \quad C_2 = \sqrt{C_{\partial \mathbf{D_1}}(u,u^{\ast})T^{\frac{1}{2}}}, \\
        C_{\partial \mathbf{D_1}} &=0.5|\partial \mathbf{D_1}|^{\frac{1}{2}}\left(\|u\|_{C^1([0,T] \times \partial \mathbf{D_1})} + \|u^{\ast}\|_{C^1([0,T] \times \partial \mathbf{D_1})}\right).
    \end{aligned}
\end{equation}
The constants 
\( C_{quad}^{tb} = C_{quad}^{tb}(\|\mathfrak{R}^2_{tb,\theta^{\ast}}\|_{C^2}) \), 
             \( C_{quad}^{sb} = C_{quad}^{tb}(\|\mathfrak{R}^2_{sb,\theta^{\ast}}\|_{C^2}) \), 
             and 
             \( C_{quad}^{int} = C_{quad}^{int}(\|\mathfrak{R}^2_{int,\theta^{\ast}}\|_{C^{0}}) \)
arise from the quadrature error.

\end{appendixthm}

\begin{proof}
From equation~\eqref{eq:main_res}, the interior residual is defined as
\begin{equation}
\begin{aligned}
\mathfrak{R}_{\text{int},\theta} = \mathfrak{R}_{\text{int},\theta}(t,x), 
\quad \forall (t,x) \in [0,T] \times \mathbf{D_1}.
\end{aligned}
\end{equation} 

For the Burgess equation, the PDE residual can be  written as
\begin{equation}\label{eqn:Rab_mod}
\begin{aligned}
\mathfrak{R}_{\text{int},\theta} 
= \partial_{t} u_{\theta} - \tfrac{1}{2}\,\partial_{xx} u_{\theta} - R(u_{\theta}), 
\quad \forall (t,x) \in [0,T] \times \mathbf{D_1}.
\end{aligned}
\end{equation}

The residuals corresponding to the initial and boundary conditions are formulated as
\begin{equation}\label{eqn:ber_proof}
\begin{aligned}
\mathfrak{R}_{\text{tb}} &= u_{\theta} - u_{0}, 
\quad \forall x \in \mathbf{D_1}, \\
\mathfrak{R}_{\text{sb}} &= u_{\theta} - u_{b}, 
\quad \forall (t, x) \in \partial \mathbf{D_1}.
\end{aligned}
\end{equation}

Now, the theorem can be proved by following the approach of~\cite{mishra2023estimates}\cite{DeRyck2024}.
\end{proof}

\begin{appendixthm}\label{thm:1}
Consider \( u \in C^{4}\left( [0,1] \times [0,T] \right) \) as the unique solution to the EFK \eqref{eq:eqmain}. Let \( u^{\ast} = u_{\theta^{\ast}} \) be the Physics-Informed Neural Network (PINN) approximation obtained using Algorithm \ref{alg1}. The non linear term \(F(u)\) satisfied the Lipchitz condition \ref{appendix:EEK_lip}. Then, the generalization error \eqref{eq:gen} satisfies the following bound:
\begin{equation}
    \label{eq:hegenb}
    \boldsymbol{\mathcal{E}}_{G} \leq C_1 \left(\boldsymbol{\mathcal{E}}_T^{tb}+\boldsymbol{\mathcal{E}}_T^{int}+C_2(\boldsymbol{\mathcal{E}}_T^{sb})^{\frac{1}{2}} + (C_{quad}^{tb})^{\frac{1}{2}}N_{tb}^{-\frac{\alpha_{tb}}{2}} +  (C_{quad}^{int})^{\frac{1}{2}}N_{int}^{-\frac{\alpha_{int}}{2}} + C_2  (C_{quad}^{sb})^{\frac{1}{4}}N_{sb}^{-\frac{\alpha_{sb}}{4}} \right),
\end{equation}
where:
\begin{itemize}
    \item \( \boldsymbol{\mathcal{E}}_{G} \) denotes the generalization error.
    \item \( \boldsymbol{\mathcal{E}}_{T}^{tb} \), \( \boldsymbol{\mathcal{E}}_{T}^{sb} \), and \( \boldsymbol{\mathcal{E}}_{T}^{int} \) are the errors associated with the temporal boundary, spatial boundary, and interior points, respectively.
    \item The constants are defined as:
    \begin{align*}
        C_1 &=\sqrt{(T + 2T^2 C_{3} e^{2C_{3}T})}, \\
        C_3 &= \sqrt{\gamma (\| u \|_{C^2_x} + \| u^{\ast} \|_{C^2_x})^2 + (\| u \|_{C^2_x} + \| u^{\ast} \|_{C^2_x}) + \frac{1}{2} + C_E}, \\
        C_2 &= \sqrt{ 8 \gamma (\| u \|_{C^3_x} + \| u^{\ast} \|_{C^3_x}) T^{1/2} }.
    \end{align*}
    \item The quadrature error constants are:
    \begin{align*}
        C_{quad}^{tb} &= C_{quad}^{tb}(\|\mathfrak{R}^2_{tb,\theta^{\ast}}\|_{C^4}), \\
        C_{quad}^{sb} &= C_{quad}^{sb}(\|\mathfrak{R}^2_{sb,\theta^{\ast}}\|_{C^2}), \\
        C_{quad}^{int} &= C_{quad}^{int}(\|\mathfrak{R}^2_{int,\theta^{\ast}}\|_{C^{0}}).
    \end{align*}
    \item \( N_{tb} \), \( N_{sb} \), and \( N_{int} \) represent the number of training points for the temporal boundary, spatial boundary, and interior domain, respectively.
\end{itemize}
\end{appendixthm}

\begin{proof}
Let $\bar{u} :u^{\ast} - u $. \\
We can write the residual of EFK equation \eqref{eq:eqmain} in the following form:
\begin{equation}\label{eqn:R_EFK}
\bar{u}_{t} + \gamma \bar{u}_{xxxx} - \bar{u}_{xx} + F(u^{\ast} -u) = \mathfrak{R}_{int},   \quad \forall (t,x) \in [0,T] \times \mathbf{D_2}
\end{equation}
\[ \text{where}~ x \in  (0, 1) ~ \text{and}  ~  t \in (0, T),\]
\[ \bar{u}(x,0) = \mathfrak{R}_{tb} ~\text{where}~ x \in  (0, 1),\]
\[ \bar{u}(0,t) = \mathfrak{R}_{sb_{1}} ~\text{where}~ t \in  (0, T),\]
\[ \bar{u}(1,t) = \mathfrak{R}_{sb_{2}} ~\text{where}~ t \in  (0, T),\]
\[ \bar{u}_{xx}(0,t) = \mathfrak{R}_{sb_{3}} ~\text{where}~ t \in  (0, T),\]
\[ \bar{u}_{xx}(1,t) = \mathfrak{R}_{sb_{4}} ~\text{where}~ t \in  (0, T).\]
We can write 
\begin{equation}\label{eq:assum}
\bar{u}\bar{u}_{xxxx} = (\bar{u}\bar{u}_{xxx})_x - (\bar{u}_x \bar{u}_{xx})_x + (\bar{u}_{xx})^2
\end{equation}
We denote $ \mathfrak{R}_{int} = \mathfrak{R}_{int, \theta^{\ast}}, \mathfrak{R}_{tb} = \mathfrak{R}_{tb, \theta^{\ast}}, \mathfrak{R}_{sb_{i}} = \mathfrak{R}_{sb, \theta^{\ast}} ~ \text{where}~ i = 0 ~ \text{to} ~4$ 
Multipyling the eq \eqref{eqn:R_EFK} by $\bar{u}$ and integrating over $(0,1),$ we get

\begin{align}
\frac{1}{2}\frac{d}{dt} \int_{0}^{1} \bar{u}^{2} \, dx + \gamma \int_{0}^{1} \bar{u} \bar{u}_{xxxx} \, dx - \int_{0}^{1} \bar{u} \bar{u}_{xx} \, dx  + \int_{0}^{1} \bar{u}F(u^{\ast}-u) \, dx = \int_{0}^{1} \bar{u}\mathfrak{R}_{int}  \, dx. 
\end{align}
\begin{align}
\frac{1}{2}\frac{d}{dt} \int_{0}^{1} \bar{u}^{2} \, dx &= - \gamma \int_{0}^{1} \bar{u} \bar{u}_{xxxx} \, dx + \int_{0}^{1} \bar{u} \bar{u}_{xx} \, dx  - \int_{0}^{1} \bar{u}F(u^{\ast}-u) \, dx + \int_{0}^{1} \mathfrak{R}_{int} \bar{u} \, dx. 
\end{align}
Now putting the value of  $\bar{u}\bar{u}_{xxxx}$ from \eqref{eq:assum}, we get
\begin{align}
\frac{1}{2}\frac{d}{dt} \int_{0}^{1} \bar{u}^{2} \, dx  = -\gamma \bar{u}\bar{u}_{xxx}\mid_{0}^{1} + \gamma \bar{u}_{x}\bar{u}_{xx}\mid_{0}^{1} - \gamma\int\limits_{0} ^{1}  \bar{u}_{xx}^{2}dx+\int\limits_{0}^{1}\bar{u}\bar{u}_{xx}dx+ \int_{0}^{1} \bar{u}F(u^{\ast}-u) \, dx   + \int_{0}^{1} \mathfrak{R}_{int} \bar{u} \, dx.
\end{align}

\begin{align}
\frac{1}{2} \frac{d}{dt} \int_{0}^{1} \bar{u}^{2} \, dx  
&\leq \gamma \|\bar{u}\|_{C_{x}^{3}} 
\left( |\mathfrak{R}_{sb_{1}}| + |\mathfrak{R}_{sb_{2}}| + |\mathfrak{R}_{sb_{3}}| + |\mathfrak{R}_{sb_{4}}| \right)  + \gamma \left( \|\bar{u}\|_{C^2_x} \right)^{2} \int_{0}^{1} 1 \,dx  
+ \|\bar{u}\|_{C^2_x} \int_{0}^{1} \bar{u} \, dx  \notag \\ 
&\quad + \frac{1}{2} \int_{0}^{1} \mathfrak{R}_{int}^{2} \,dx  
+ \left( \frac{1}{2} + C_E \right) \int_{0}^{1} \bar{u}^{2} \, dx.
\end{align}
\begin{align}
\frac{1}{2}\frac{d}{dt} \int_{0}^{1} \bar{u}^{2} \, dx  \leq \gamma  \parallel \bar{u} \parallel_{C_{x}^{3}}\left( \vert \mathfrak{R}_{sb_{1}} \vert + \vert \mathfrak{R}_{sb_{2}} \vert +  \vert \mathfrak{R}_{sb_{3}} \vert + \vert \mathfrak{R}_{sb_{4}} \vert \right)  + \left( \gamma (\|\bar{u}\|_{C^2_x})^2 + \|\bar{u}\|_{C^2_x}+ \frac{1}{2}+ C_E \right) \left( \int_{0}^{1} \bar{u}^2 dx  \right) + \frac{1}{2}\int_{0}^{1} \mathfrak{R}_{int}^{2} dx.
\end{align}
\begin{align}
\frac{1}{2} \frac{d}{dt} \int_{0}^{1} \bar{u}^{2} \, dx  
&\leq \gamma (\| u \|_{C^3_x} + \| u^{\ast} \|_{C^3_x}) \left( \vert \mathfrak{R}_{sb_{1}} \vert + \vert \mathfrak{R}_{sb_{2}} \vert +  \vert \mathfrak{R}_{sb_{3}} \vert + \vert \mathfrak{R}_{sb_{4}} \vert\right) \nonumber \\
&\quad + \left( \gamma (\| u \|_{C^2_x} + \| u^{\ast} \|_{C^2_x})^2 + (\| u \|_{C^2_x} + \| u^{\ast} \|_{C^2_x} )+ \frac{1}{2} + C_E \right) \int_{0}^{1} \bar{u}^2 \, dx \nonumber \\
&\quad + \frac{1}{2} \int_{0}^{1} \mathfrak{R}_{int}^{2} \, dx.
\end{align}

\begin{align}
\frac{1}{2}\frac{d}{dt} \int_{0}^{1} \bar{u}^{2} \, dx \leq C_{1} \sum\limits_{i}^{4} \left(   \mathfrak{R}_{sb_{i}} \right)   + C_{2} \int_{0}^{1} \bar{u}^2 dx  \quad & +  \frac{1}{2} \int_{0}^{1} \mathfrak{R}_{int}^{2} \, dx. 
\end{align}
The mixed norm is defined as  
\[
\| u \|_{C^m_t C^n_x}.
\]
Then, integrating the above inequality over \([0, \bar{T}]\) for any \(\bar{T} \leq T\), and using the Cauchy-Schwarz and Gronwall’s inequalities, we obtain the following estimate:
\begin{align}
\int\limits_{0}^{1} \bar{u}^{2} \, dx 
&\leq \int\limits_{0}^{1} \mathfrak{R}_{tb}^{2} \, dx  
+ 2C_{1} \bar{T}^{\frac{1}{2}} \sum\limits_{i=1}^{4}  
\left(\int\limits_{0}^{T} \mathfrak{R}^{2}_{sb_{i}} \, dt \right)^{\frac{1}{2}}  \notag \\  
&\quad    
+ \int\limits_{0}^{T} \int\limits_{0}^{1} \mathfrak{R}_{int}^{2} \, dx \, dt + 2C_{2} \int\limits_{0}^{T} \int\limits_{0}^{1} \bar{u}^{2} \, dx \, dt.
\end{align}
\begin{align}
\int\limits_{0}^{1} \bar{u}^{2} \, dx &\leq (1+2TC_{2}e^{2C_{2}T})\left[ \int\limits_{0}^{1} \mathfrak{R}_{tb}^{2} dx + 8C_{1}T^{\frac{1}{2}} \sum\limits_{i}^{4} \left(\int\limits_{0}^{T} \mathfrak{R}^{2}_{sb_{i}}dt \right)^{\frac{1}{2}}    \right] \notag\\   \quad & + (1+2\bar{T}C_{2}e^{2C_{2}T})\left[2\int\limits_{0}^{T}\int\limits_{0}^{1} \mathfrak{R}_{int}^{2} dxdt  + 2C_{2}\int\limits_{0}^{T}\int\limits_{0}^{1} \bar{u}^{2} dxdt\right].
\end{align}
Again integrating over $T$ we get,

\begin{align}
\boldsymbol{\mathcal{E}}_{G} &\leq (T+2T^2C_{2}e^{2C_{2}T})\left[ \int\limits_{0}^{1} \mathfrak{R}_{tb}^{2} dx + 8C_{1}T^{\frac{1}{2}} \sum\limits_{i}^{4} \left(\int\limits_{0}^{T} \mathfrak{R}^{2}_{sb_{i}}dt \right)^{\frac{1}{2}}    \right] \notag\\   \quad & + (\bar{T}+2T^2C_{2}e^{2C_{2}T})\left[2\int\limits_{0}^{\bar{T}}\int\limits_{0}^{1} \mathfrak{R}_{int}^{2} dxdt  + 2C_{2}\int\limits_{0}^{T}\int\limits_{0}^{1} \bar{u}^{2} dxdt\right],
\end{align}
where $C_{1}=\left( \gamma (\| u \|_{C^3_x} + \| u^{\ast} \|_{C^3_x})  \right)$ and $C_{2} = \left( \gamma (\| u \|_{C^2_x} + \| u^{\ast} \|_{C^2_x})^2 + (\| u \|_{C^2_x} + \| u^{\ast} \|_{C^2_x} )+ \frac{1}{2} + C_E \right)$ .   
\end{proof}

\begin{appendixthm}[Lemma] \label{appendix:EFK_uniqe} 
Under Assumption (H1), if \( u_0 \in H^2(\mathbf{D_2}) \), then the EFK equation \eqref{eq:eqmain} admits a unique solution \( u \) on \( [0,T] \) satisfying  
\[
    u \in C([0,T]; H^2(\mathbf{D_2})) \cap L^2(0,T; H^4(\mathbf{D_2})).
\]  
\end{appendixthm}  

\begin{appendixthm}[Lemma] \label{appendix:buress_unique} 
Under Assumption (H2), let \( u_0 \in H^1(\mathbf{D_1}) \), then the Burgess equation \eqref{eq:eqmain} has a unique solution \( u \) on \( [0,T] \) such that
\[
    u \in C([0,T]; H^1(\mathbf{D_1})) \cap L^2([0,T]; H^2(\mathbf{D_1})).
\]  
\end{appendixthm}  

\begin{appendixthm}[Lemma] \label{appendix:buress_lip}
Assuming that the non-linearity is globally Lipschitz, there exists a constant \( C_R \) (independent of \( u_{1},u_{2} \)) such that  
\begin{equation}
    \label{eq:Bur_lip1}
    |R(u_{1}) - R(u_{2})| \leq C_R|u_{1}-u_{2}|, \quad u_{1},u_{2} \in \mathbb{R}.
\end{equation}
\end{appendixthm}  

\begin{appendixthm}[Lemma] \label{appendix:EEK_lip}
Let \( u \in C^{\infty}(\mathbf{D_2}) \), where \( \mathbf{D_2} \) is a closed set in \( \mathbb{R}^d \). Consider the function \( F(u) = u^3 - u \). Then, \( F(u) \) satisfies a Lipschitz condition, i.e., there exists a constant \( C_{F} \) (independent of \( u_{1},u_{2} \)) such that:
\[
|F(u_1) - F(u_2)| \leq  C_{F}|u_{1} - u_{2}|
\]
for all \( u_1, u_2 \in \mathbf{D_2} \).
\end{appendixthm}  

\begin{proof}
See \cite{ilati2020analysis}.
\end{proof}
\begin{appendixthm}[Theorem \cite{danumjaya2006numerical}] \label{appendix:uniform_bound}
Let \( u_0 \in H^2_0(\mathbf{D_2}) \) be the initial condition of \( u(t) \), satisfying  
\begin{equation}
    u(0) = u_0.
\end{equation}
There exists a constant \( C > 0 \) such that the following bound holds for all \( t > 0 \):  
\begin{equation}
    \| u(t) \|_{L^2} \leq C(\gamma, \| u_0 \|_{L^2}).
\end{equation}
Moreover, the solution remains uniformly bounded in the \( L^\infty \)-norm as  
\begin{equation}
    \| u(t) \|_{L^\infty} \leq C(\gamma, \| u_0 \|_{L^2}), \quad t > 0.
\end{equation}
\end{appendixthm}
\subsection*{Declaration}
\subsubsection*{Declaration of competing interest} The authors declare that they have no competing interests.
\subsubsection*{Conflict of interest} The authors declare that they have no conflict of interest.
\subsection*{Funding} No funding was received for this study.
\section*{Acknowledgment}
The first author acknowledges the Ministry of Human Resource Development (MHRD), Government of India, for providing institutional funding and support at IIT Madras.

\bibliographystyle{abbrv}
\bibliography{sample}
\end{document}